\documentclass[12pt]{article}
\usepackage{setspace}

\usepackage{amsmath, amscd}
\usepackage{amsfonts}
\usepackage{amssymb}
\usepackage{amsthm}
\usepackage{upref}
\usepackage{multirow}
\usepackage{graphicx}
\usepackage{subfig}
\usepackage{url}
\usepackage{mathtools}
\usepackage{pinlabel}
\usepackage{color}
\usepackage{hyperref}
\usepackage[capitalize,nameinlink,noabbrev]{cleveref}
\usepackage{changepage}
\usepackage{epstopdf}
\usepackage{tikz}
\usepackage{enumerate}
\usepackage{multicol}
\theoremstyle{plain}
\newtheorem{theorem}{Theorem}
\newtheorem{lemma}{Lemma}
\newtheorem{proposition}{Proposition}

\numberwithin{theorem}{section}
\numberwithin{lemma}{section}
\numberwithin{proposition}{section}
\numberwithin{corollary}{section}

\theoremstyle{definition}
\newtheorem{definition}{Definition}
\newtheorem{remark}{Remark}
\newtheorem{example}{Example}
\numberwithin{definition}{section}
\numberwithin{remark}{section}
\numberwithin{example}{section}
\newtheorem{fact}{Fact}

\newcommand{\Aut}{{\mathrm{Aut}}}
\newcommand{\Out}{{\mathrm{Out}}}
\newcommand{\Inn}{{\mathrm{Inn}}}
\newcommand{\N}{{\mathbb{N}}}
\newcommand{\Z}{{\mathbb{Z}}}

\newcommand{\id}{{\mbox{id}}}

\begin{document}
\title{A characterization of virtually cyclic outer automorphism groups of right-angled Coxeter groups}
\author{Christina Angharad Hodges}
\date{\textbf{April 29, 2026}}
\maketitle

Existing research gives conditions for when the outer automorphism group of a graph product of primary cyclic groups $W_\Gamma$ is finite, virtually abelian, or large. We seek to prove a set of conditions for when this outer automorphism group is virtually cyclic. To this end, we study the finite index subgroup $\Out^0(W_\Gamma)$, which is generated by specific partial conjugations. The presence or absence of Coxeter and non-Coxeter separating intersections of links (SILs), separating triple intersections of links (STILs), and flexible separating intersections of links (FSILs) in $\Gamma$ determines algebraic properties of $\Out^0(W_\Gamma)$. We identify each SIL with a pair of partial conjugations in $\Out^0(W_\Gamma)$ and place restrictions on the SILs in $\Gamma$ to ensure that $\Out^0(W_\Gamma)$ is virtually $\Z$ both when $\Gamma$ is connected or disconnected. In particular, this applies to the study of right-angled Coxeter groups. 

\section{Introduction}

\subsection{Motivation and related efforts\label{Section1.1}}

A graph product of groups is a group construction introduced by \cite{wreo236} which interpolates between the direct product and free product of a collection of groups. This paper studies graph products of primary cyclic groups, denoted $W_\Gamma$, which notably include right-angled Coxeter groups. For a graph product of primary cyclic groups, the inner and outer automorphism groups (denoted $\Inn(W_\Gamma)$ and $\Out(W_\Gamma)$, respectively) constitute all but a finite part of the automorphism group $\Aut(W_\Gamma)$. Thus a study of the outer automorphism group leads to a deeper understanding of the entire automorphism group. Our results do not generalize to graph products of directly indecomposable groups, which include right-angled Artin groups. \cite{charney2008automorphismshigherdimensionalrightangledartin} and \cite{Guirardel_2018}, among others, study the structure of the outer automorphism group of a right-angled Artin group. 

We build upon the results of \cite{charney2009automorphismgroupgraphproduct} and \cite{Sale_2021} on $\Out(W_\Gamma)$, and the results of \cite{Laurence_1995}, \cite{Gutierrez2008}, and \cite{corredor2009generatingsetautomorphismgroup} on $\Aut(W_\Gamma)$.  Our results are sandwiched between the work of Gutierrez, Piggott, and Ruane (\cite{Gutierrez_2012}) and that of Susse and Sale \cite{sale2017outerautomorphismgroupsrightangled}. \cite{Gutierrez_2012} characterized when $\Out(W_\Gamma)$ is finite, depending on the presence of a separating intersection of links (SIL). In \cite{sale2017outerautomorphismgroupsrightangled}, Susse and Sale furthered this work by characterizing when $\Out(W_\Gamma)$ is virtually abelian or large (that is, it maps onto a non-abelian free group). This paper aims to fill the gap between these two papers by describing which properties of $\Gamma$ occur exactly when $\Out(W_\Gamma)$ is virtually cyclic, furthering the study of how the number of SIL's in $\Gamma$ impacts structural properties of $\Out(W_\Gamma)$. We follow the methods of both sets of authors, utilizing in particular the generating set $\mathcal{P}^0$ of $\Out^0(W_\Gamma)$ defined in \cite{Gutierrez_2012}, and following the structure used in \cite{sale2017outerautomorphismgroupsrightangled} to prove that our set of conditions is necessary and sufficient.

The broader motivation for studying this particular question is to describe the  groups $\Aut(W_\Gamma)$ and $\Out(W_\Gamma)$. These groups are similar to, yet simpler than, $\Aut(F_n)$ and $\Out(F_n)$ (where $F_n$ is a free group with $n$ generators). While there is an immense amount of literature on $\Aut(W_\Gamma)$ and $\Out(W_\Gamma)$, we do not yet have a complete understanding of their structure for all $W_\Gamma$; in particular, little can be said about their geometry. \cite{charney2009automorphismgroupgraphproduct} studied the geometry of $\Aut(W_\Gamma)$ when $\Out(W_\Gamma)$ is finite. The next simplest case is when $\Out(W_\Gamma)$ is a ``small" infinite group, i.e. virtually cyclic, which we discuss in this paper. 

As we will see in \cref{remarkaut}, when $\Out(W_\Gamma)$ is virtually cyclic, $\Aut(W_\Gamma)$ is virtually a group of the form $W_{\Gamma} \rtimes \Z$. The geometry of $W_\Gamma$ is well understood because there is a natural cubical CAT(0) geometry $X$ on which this group acts geometrically. We also understand the geometry of the \emph{direct} product $W_\Gamma \times\Z$ because we can simply take the metric product $X\times\mathbb R$ as a geometry for the direct product. The difficulty is finding a geometry that reflects the algebra of each semi-direct product of $W_{\Gamma}$ with $\Z$. When $\Out(W_\Gamma)$ is virtually cyclic, then up to finite index, it is generated by a single automorphism $\varphi$; thus we only have to understand the geometry of \emph{one} such semi-direct product to uncover the geometry of $\Aut(W_{\Gamma})$. We will not explore geometrical implications of our results, but in general, it is a difficult problem to understand the geometry of $G\rtimes\mathbb Z$ even when the geometry of $G$ is nice.

\subsection{Summary}\label{Section1.2}

In Chapter 2, we list basic definitions and results from graph theory and group theory which are used in the rest of the paper. Chapter 3 includes results from Gutierrez, Piggott, and Ruane (\cite{Gutierrez_2012}), in particular the splitting of $\Aut(W_\Gamma)$, the construction of a generating set $\mathcal{P}^0$ for $\Out^0(W_\Gamma)$, and a set of conditions for when $\Out(W_\Gamma)$ is finite. This chapter also introduces the notion of FSILs and STILs in the graph $\Gamma$ and a set of conditions from Susse and Sale (\cite{sale2017outerautomorphismgroupsrightangled}) for when $\Out(W_\Gamma)$ is large. In Chapter 4, we prove that a pair of distinct SILs in $\Gamma$ corresponds to four distinct partial conjugations in $\Out^0(W_\Gamma)$ which together generate a subgroup that is not virtually $\Z$. Chapter 5 considers separately the case of a disconnected graph $\Gamma$, summarizing methods from \cite{sale2017outerautomorphismgroupsrightangled} and proving the main result for the disconnected case. Chapter 6 concludes with a proof of the main result, a condition on $\Gamma$ so that $\Out(W_\Gamma)$ is virtually $\Z$, for the connected case. We also see a few abstract and concrete examples of what connected graphs satisfy this condition. 

\section{Preliminaries}

Recall some basic definitions about graphs. 

\begin{definition}
    A \textbf{graph} $\Gamma = (V(\Gamma), E(\Gamma))$ is an ordered pair consisting of a set $V(\Gamma)$ of vertices and a set $E(\Gamma)$ of undirected edges which each connect two vertices.

    $\Gamma$ is \textbf{finite} if it has finitely many vertices.

    A \textbf{loop} in $\Gamma$ is an edge connecting a vertex $v \in V(\Gamma)$ to itself.

    \textbf{Multiple edges} are two or more edges connecting the same pair of vertices.

    A \textbf{simple} graph is a graph with no loops or multiple edges.

    An \textbf{induced subgraph} $\Omega$ of $\Gamma$ has as its vertex set a subset $V(\Omega) \subseteq V(\Gamma)$ and as its edge set $E(\Omega) = \{ (v_i,v_j) \in E(\Gamma): v_i, v_j \in V(\Omega) \}$. We may refer to this as just a subgraph, but we always assume a subgraph is induced unless otherwise noted.

    $\Gamma$ is \textbf{complete} if every pair of vertices in $V(\Gamma)$ is joined by an edge.

    A subgraph $C \subseteq \Gamma$ is a \textbf{connected component} of $\Gamma$ if there is a path in $C$ between any two vertices of $C$, and if for any vertex $v \in V(\Gamma) \backslash V(C)$, there is no path in the subgraph $C \cup \{v\}$ between $v$ and a vertex in $C$.

    $\Gamma$ is \textbf{disconnected} if it has more than one connected component. 

    The \textbf{center} of $\Gamma$, denoted $Z(\Gamma)$, is the subset of $V(\Gamma)$ consisting of those vertices which are adjacent to all other vertices in $V(\Gamma)$.
    
    \end{definition}

In this paper, we will assume graphs are simple, finite, and non-trivial. As in the paper by Gutierrez, Piggott, and Ruane (\cite{Gutierrez_2012}), we will define two categories of graph products. 

\begin{definition}
    Let $\Gamma$ be a graph with vertex set $V(\Gamma)= \{v_1,...,v_n\}$. An \textbf{order map} on $\Gamma$ is a function \[\textbf{m}: \{1,2,..., n\} \rightarrow \{p^\alpha: p \text{ prime}, \alpha \in \N\} \cup \{\infty\}.\]

    The pair $(\Gamma, \textbf{m})$ forms a \textbf{labelled graph}. We define the \textbf{graph product of directly indecomposable cyclic groups} $W(\Gamma, \textbf{m})$, usually denoted $W_\Gamma$, as the group with the following presentation:
    \[W_\Gamma := \langle V(\Gamma)\ | v_i^{\textbf{m(i)}}, [v_j,v_k] \mbox{ for } 1 \le i,j,k \le n \mbox{ and }(v_j,v_k) \in E(\Gamma) \rangle.\]
    By convention, if $\textbf{m}(i) = \infty$, then $v_i$ has infinite order. 
\end{definition}

We know from \cite{Radcliffe_2001} that given a group with a graph product decomposition into primary cyclic groups, this decomposition is unique; \cite{GUTIERREZ_2008} extended this result to all graph products of directly indecomposable cyclic groups. So we have a one-to-one correspondence between labelled graphs $(\Gamma, \textbf{m})$ and graph products of directly indecomposable cyclic groups $W_\Gamma$. Given a generator $v$ of $W_\Gamma$, we often do not distinguish between $v$ as a group element and $v$ as a vertex of the graph. \cite{Gutierrez_2012} proved that the graph products of directly indecomposable cyclic groups are exactly the graph products of finitely generated abelian groups, hence some authors refer to these groups with the latter terminology.

Recall that all directly indecomposable cyclic groups have one of three forms: the trivial group $\{\text{id}\}$, $\Z$, or $\Z/p^k\Z$, where $p$ is a prime number and $k$ is a positive integer. Only $\{\text{id}\}$ and $\Z/p^k\Z$ are also primary cyclic groups, and all primary cyclic groups have one of these two forms, so we want to adjust our definition of $W_\Gamma$ so as to not allow vertex groups of $\Z$. 

\begin{definition}
    We say that $W(\Gamma, \textbf{m})$ on $n$ vertices is a \textbf{graph product of primary cyclic groups} if $\textbf{m}(i) < \infty$ for $1 \le i \le n$. 
\end{definition}

From now on, $W_\Gamma$ will refer strictly to a graph product of primary cylic groups. 

\begin{definition}
    A \textbf{right-angled Coxeter group} is a graph product of primary cyclic groups $W(\Gamma, \textbf{m})$ on $n$ vertices with $\textbf{m}(i) = 2$ for $1 \le i \le n$.
\end{definition}

We immediately see that for elements $a$ and $b$ in a right-angled Coxeter group $W_\Gamma$, either $[a,b]= 1$ or the element $ab$ has infinite order. We will give a simple example of a graph product of primary cyclic groups. 

\begin{example}
Consider the graph $\Gamma$ in \cref{pictureGPPCGex}. Let $\textbf{m}$ be the order map on $\Gamma$ given by $\textbf{m}(1)= 2, \ \textbf{m}(2) = 2$, and $\textbf{m}(3) = 3$. In other words, $v_1$ and $v_2$ both correspond to the group $\Z/2\Z$ and $v_3$ corresponds to $\Z/3\Z$. This determines the graph product of primary cyclic groups \[W_\Gamma = \langle v_1,v_2,v_3\ | v_1^2, v_2^2, v_3^3, [v_1,v_2], [v_1,v_3]\rangle.\] An edge between vertices indicates that their corresponding groups commute, so we clearly see that \[W_\Gamma \cong \Z/2\Z \times (\Z/2\Z * \Z/3\Z).\]

    \begin{figure}
    \centering
    \begin{tikzpicture}
    \filldraw (0,0) circle (0.05) node[below]{$v_3$};
    \filldraw (3,0) circle (0.05) node[below]{$v_2$};
    \filldraw (1.5,2) circle (0.05) node[above]{$v_1$};
    \draw (0,0) -- (1.5,2) -- (3,0);
    \end{tikzpicture}
    \caption{Simple example of a graph product of primary cyclic groups}
    \label{pictureGPPCGex}
    \end{figure}
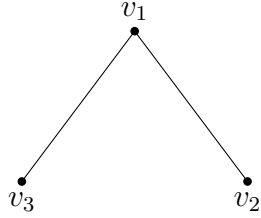

Note that $\Gamma$ as a graph is independent from the order map \textbf{m}; we could define another order map \textbf{m'} on this particular graph which defines a different graph product. However, this graph product will always be of the form $\Z/\textbf{m'}(1)\Z \times (\Z/\textbf{m'}(2)\Z * \Z/\textbf{m'}(3)\Z)$. 

\end{example}

Of course, graph products of primary cyclic groups worth studying are much more complex. A graph theory property that directly impacts the structure of $\Out(W_\Gamma)$ is called a SIL.

\begin{definition}
    In a simple graph $\Gamma$, the \textbf{link} of a vertex $v \in V(\Gamma)$ is the set $Lk(v) := \{w: (v,w) \in E(\Gamma)\}$. The \textbf{star} of $v$ is the set $St(v) := Lk(v) \cup \{v\}$.
\end{definition}

\begin{definition}
    A \textbf{Separating Intersection of Links (SIL)} in a graph is two vertices $v_1$ and $v_2$ in $V(\Gamma)$ satisfying:
    \begin{enumerate}
        \item $d(v_1,v_2) \ge 2$; and
        \item There is a connected component $C \subseteq \Gamma \backslash (Lk(v_1) \cap Lk(v_2))$ such that $v_1$ and $v_2$ are not in $C$. (See \cref{pictureSIL}).
    \end{enumerate}

    There are two equivalent notations for a SIL formed by $v_1$ and $v_2$: we can write either $\{v_1,v_2|C\}$, where $C$ is the connected component satisfying condition (2) of a SIL, or $\{v_1,v_2|z\}$ for some vertex $z$ in $C$. We define two SILs $\{v_1,v_2|C\}$ and $\{w_1,w_2|D\}$ as distinct if $C \cap D = \emptyset$ or if $\{v_1,v_2\} \ne \{w_1,w_2\}$. In the second notation, two SILs $\{v_1,v_2|z\}$ and $\{v_1,v_2|y\}$ are distinct if $z$ and $y$ are not in the same connected component of $\Gamma \backslash (Lk(v_1) \cap Lk(v_2))$, or again if $\{v_1,v_2\} \ne \{w_1,w_2\}$.

    We call the SIL $\{v_1,v_2|C\}$ a \textbf{Coxeter SIL} if $\textbf{m}(1) = \textbf{m}(2) = 2$, or a \textbf{non-Coxeter SIL} if either $\textbf{m}(1)$ or $\textbf{m}(2)$ is greater than 2. We use ``SIL" as an umbrella term to encompass both types. 
\end{definition}

\begin{figure}
    \centering
    \begin{tikzpicture}
    \filldraw (2,4) circle (0.05) node[above]{$v_1$};
    \filldraw (2,1) circle (0.05) node[below]{$v_2$};
    \draw (2,2.5) ellipse (1 and 0.5);
    \node[left] at (1.1,2.5) {$Lk(v_1) \cap Lk(v_2)$};
    \filldraw (1.5,2.5) circle (0.05);
    \filldraw (2,2.5) circle (0.05);
    \filldraw (2.5,2.5) circle (0.05);
    \draw (2,4) -- (1.5,2.5) -- (2,1);
    \draw (2,4) -- (2,2.5) -- (2,1);
    \draw (2,4) -- (2.5,2.5) -- (2,1);
    \draw (4.5,2.5) ellipse (0.5 and 1.5) node{$C$};
    \draw (2.5,2.5) -- (4,2.5);
    \end{tikzpicture}
    \caption{SIL (Separating Intersection of Links)}
    \label{pictureSIL}
\end{figure}
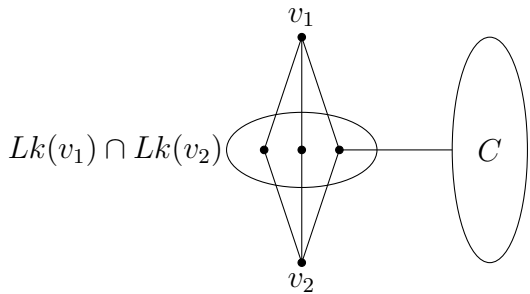

\begin{definition}
    Let $G$ be a group and $P$ a group property. $G$ is \textbf{virtually} $P$ if $G$ contains a finite index subgroup $H$ with the property $P$.
\end{definition}

We will consider when a subgroup of $\Aut(W_\Gamma)$ has one of four algebraic properties: finite, virtually $\Z$ (also called virtually cyclic), virtually abelian, and large. Although the latter three are not  cardinalities of the group, we can roughly think of them as descriptions of the ``size" of the group. Note that a virtually cyclic group is virtually abelian, but otherwise a group cannot have two or more of these properties. \cref{theoremlarge} will establish the strict dichotomy between virtually abelian and large. 

\begin{definition}
    A group $G$ is \textbf{large} if there is a finite index subgroup $H$ of $G$ and a surjective homomorphism $H \rightarrow F_2$, the free group of rank 2.
\end{definition}

\begin{example}
    Let $W_\Gamma = W(\Gamma, \textbf{m})$ be a graph product of primary cyclic groups. Suppose that $v_1$ and $v_2$ are non-adjacent vertices and consider the empty subgraph $\Omega$ induced by $v_1$ and $v_2$. Now, $W_\Omega$ has the presentation $\langle v_1,v_2| v_1^{\textbf{m}(1)}, v_2^{\textbf{m}(2)} \rangle$. Since there is no edge between the two vertices, we get the free product \[W_\Omega \cong (\Z/\textbf{m}(1)\Z) * (\Z/\textbf{m}(2)\Z).\] If $\textbf{m}(1) = \textbf{m}(2) = 2$, then  \[W_\Omega \cong (\Z/2\Z) * (\Z/2\Z) \cong D_\infty.\] If either $\textbf{m}(1)$ or $\textbf{m}(2)$ is greater than 2, say $\textbf{m}(1) >2$, then $W_\Omega$ contains the subgroup $\langle v_1v_2, v_1^2v_2 \rangle$, which is isomorphic to $F_2$. In that case, $W_\Omega$ is large.

    Now suppose there is another vertex $v_3$ of $\Gamma$ and that the subgraph $\Delta \subseteq \Gamma$ induced by $v_1, v_2$, and $v_3$ is empty. Then \[W_\Delta \cong (\Z/\textbf{m}(1)\Z) * (\Z/\textbf{m}(2)\Z) * (\Z/\textbf{m}(3)\Z).\] For any values of $\textbf{m}(1), \textbf{m}(2)$, and $\textbf{m}(3)$, we have a subgroup $\langle v_1v_2, v_1v_3 \rangle$ in $W_\Delta$. This subgroup is isomorphic to $F_2$, so $W_\Delta$ is large. 

    Note that if a general group $G$ is large, then $\Inn(G)$ is large, and thus $\Aut(G)$ is large. 
    
\end{example}

We conclude this chapter with a few facts that we will be referring to as tools. We won't prove them here, but leave them as exercises to the reader. 

\begin{fact}
    $\Z/2\Z * \Z/2\Z \cong D_\infty$.
\end{fact}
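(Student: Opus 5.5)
Let $a$ and $b$ denote the generators of the two free factors, so $\Z/2\Z * \Z/2\Z = \langle a, b \mid a^2, b^2\rangle$, and take $D_\infty = \langle r, s \mid s^2, srs^{-1}=r^{-1}\rangle \cong \Z \rtimes \Z/2\Z$, where $s$ acts on $\Z$ by inversion. I will write down homomorphisms in both directions and check that they are mutually inverse on generators.

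First, define $\varphi: \Z/2\Z * \Z/2\Z \to D_\infty$ by $a \mapsto s$ and $b \mapsto sr$. This is well defined because $s^2=1$ and
\[(sr)^2 = s r s r = (srs^{-1})\, r = r^{-1} r = 1,\]
using $s=s^{-1}$. Second, define $\psi: D_\infty \to \Z/2\Z * \Z/2\Z$ by $s \mapsto a$ and $r \mapsto ab$. Here we must check the relations: $a^2=1$ holds, and
\[a(ab)a^{-1} = ba = b^{-1}a^{-1} = (ab)^{-1},\]
using $a^{-1}=a$ and $b^{-1}=b$.

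Now compare the compositions on generators. For $\psi\circ\varphi$ we get $a \mapsto a$ and $b \mapsto a\cdot ab = b$. For $\varphi\circ\psi$ we get $s \mapsto s$ and $r \mapsto s\cdot sr = r$. Hence both compositions are the identity, and the two groups are isomorphic.

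The only step needing care is verifying the relations so that the maps are well defined. In particular, one has to check that $sr$ is an involution, which follows from the semidirect-product relation. No deeper obstacle is expected.
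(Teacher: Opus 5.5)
Your argument is correct: both assignments respect the defining relations, and both composites fix the generators, so $\varphi$ and $\psi$ are mutually inverse isomorphisms. The paper gives no proof of this fact (it is explicitly left as an exercise), so there is nothing to compare against. Your explicit-inverse argument between $\langle a,b \mid a^2,b^2\rangle$ and $\langle r,s\mid s^2,\ srs^{-1}=r^{-1}\rangle$ is the standard proof. Its only implicit input is that the latter presentation defines, or is known to present, $\Z\rtimes\Z/2\Z$.
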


\begin{fact}
    $D_\infty$ is virtually $\Z$.
\end{fact}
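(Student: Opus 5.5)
Let $D_\infty = \Z/2\Z * \Z/2\Z = \langle a, b \mid a^2, b^2\rangle$, using the preceding Fact, and let $t = ab$. The plan is to show that the cyclic subgroup $H = \langle t \rangle$ is infinite cyclic and has index $2$ in $D_\infty$; this exhibits a finite index subgroup isomorphic to $\Z$, which is exactly the definition of virtually $\Z$.

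\textbf{Index.} Consider the homomorphism $\epsilon: D_\infty \to \Z/2\Z$ sending both $a$ and $b$ to the nontrivial element. It is well defined because $a^2$ and $b^2$ map to the identity. It is surjective, so its kernel $K$ has index $2$. Clearly $t = ab \in K$, so $H \le K$.

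For the reverse inclusion, every element of $D_\infty$ is represented by an alternating word in $a$ and $b$. An element lies in $K$ exactly when its word has even length. An even-length alternating word is either $(ab)^k$ or $(ba)^k = (ab)^{-k}$, since $ba = a^{-1}b^{-1} = (ab)^{-1}$. Hence $K = H$, and so $[D_\infty : H] = 2$.

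\textbf{Infinite order.} It remains to show that $t$ has infinite order. The cleanest route is the normal form theorem for free products: the word $(ab)^k$ with $k \ge 1$ is a reduced alternating word of length $2k$, hence is nontrivial. If I want to avoid citing normal forms, I would instead build an explicit action on $\Z$, namely $a \cdot n = -n$ and $b \cdot n = 1 - n$. Both maps are involutions, so the action is well defined. Then $ab$ acts as a translation: $n \mapsto 1-n$ followed by $m \mapsto -m$ gives $n \mapsto n - 1$. So $(ab)^k$ acts nontrivially for all $k \neq 0$, and $H \cong \Z$.

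The only step needing real care is this last one, showing that $t$ has infinite order. The explicit translation action settles it with a one-line check, so no real obstacle is expected.
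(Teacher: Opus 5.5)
The paper gives no proof of this statement: it lists it among the facts "left as exercises to the reader." There is therefore no argument of the paper's to compare yours against. Your proof is correct and complete, and it is the standard argument.

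The parity map $\epsilon$ identifies the index-$2$ kernel with $\langle ab\rangle$. This step uses only that every element has some alternating representative, so you correctly avoid needing uniqueness of normal forms. The action $a\cdot n=-n$, $b\cdot n=1-n$ on $\Z$ is well defined because both maps are involutions. Under it, $ab$ acts as $n\mapsto n-1$, which shows that $\langle ab\rangle\cong\Z$ without citing the normal form theorem for free products.

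One small slip: you wrote $ba = a^{-1}b^{-1}$, but $a^{-1}b^{-1}=ab$. The correct chain is $ba = b^{-1}a^{-1} = (ab)^{-1}$. That is the identity you actually use, so the conclusion $(ba)^k=(ab)^{-k}$ and the equality $K=\langle ab\rangle$ stand.
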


\begin{fact}
    $D_\infty \times D_\infty$ is virtually $\Z \times \Z$, hence it is not virtually $\Z$.
\end{fact}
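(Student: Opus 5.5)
The statement is that $D_\infty \times D_\infty$ is virtually $\Z \times \Z$ and therefore not virtually $\Z$. The plan is to exhibit an explicit finite-index copy of $\Z \times \Z$, and then rule out a finite-index infinite cyclic subgroup by comparing such a subgroup with the $\Z\times\Z$ one.

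For the first step, write $D_\infty = \langle a,b \mid a^2,b^2\rangle$ using the first Fact. The element $t = ab$ has infinite order, and $T=\langle t\rangle$ is normal of index $2$. This follows from the standard normal form: every element is $t^k$ or $t^k a$, and $a t a^{-1} = t^{-1}$. This is essentially the content of the second Fact. Then $T \times T \cong \Z\times\Z$ is a subgroup of $D_\infty\times D_\infty$ of index $2\cdot 2 = 4$, since the index of a product of subgroups in a product of groups is the product of the indices. So $D_\infty\times D_\infty$ is virtually $\Z\times\Z$.

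For the second step, suppose some finite-index subgroup $H \le D_\infty\times D_\infty$ is virtually $\Z$. Passing to a further finite-index subgroup, we may take $H \cong \Z$ with $H$ of finite index in $D_\infty \times D_\infty$. Then $H' = H \cap (T\times T)$ has finite index in $T\times T\cong \Z^2$, using that the intersection of two finite-index subgroups has finite index. On the other hand, $H'$ is a subgroup of $H\cong\Z$, so it is cyclic. A finite-index subgroup of $\Z^2$ is isomorphic to $\Z^2$, because $\Z^2/H'$ finite forces $H'\otimes\mathbb{Q} = \mathbb{Q}^2$. Thus we get $\Z^2$ cyclic. This is impossible: $\Z^2$ surjects onto $(\Z/2\Z)^2$, which is not cyclic. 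The contradiction shows $D_\infty\times D_\infty$ is not virtually $\Z$.

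The only point needing care is the claim that a finite-index subgroup of $\Z^2$ has rank $2$. The cleanest route is the rationalization argument above, or equivalently the observation that a cyclic subgroup $\langle (p,q)\rangle$ misses infinitely many cosets, for instance those of $(-q,p)$-multiples when $(p,q)\neq 0$. Everything else is routine index bookkeeping.
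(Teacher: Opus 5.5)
Your argument is correct and complete. The paper gives no proof of this Fact (it is explicitly left as an exercise), so there is nothing to compare against. Your route is the standard one implicit in the word ``hence'': exhibit $T\times T\cong\Z^2$ of index $4$, then intersect a hypothetical finite-index infinite cyclic subgroup with it to obtain a cyclic finite-index subgroup of $\Z^2$. This is also close to the intersection trick the paper itself uses at the end of the proof of \cref{theorem3.5}.

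One small tightening. The rationalization $H'\otimes\mathbb{Q}\cong\mathbb{Q}^2$ by itself only shows that $H'$ has rank $2$. To conclude $H'\cong\Z^2$ you would also need that subgroups of $\Z^2$ are free abelian. You do not need the full isomorphism, though: $H'$ is cyclic, so $\dim_{\mathbb{Q}}(H'\otimes\mathbb{Q})\le 1$, which already contradicts rank $2$.
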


\section{Existing results about $\Aut(W_\Gamma)$ and $\Out(W_\Gamma)$}

\subsection{The splitting of $\Aut(W_\Gamma)$}\label{Section3.1}

To motivate our study of the outer automorphism group $\Out(W_\Gamma)$, we will see how the entire automorphism group $\Aut(W_\Gamma)$ splits as a semidirect product.

\begin{definition}
    $\Inn(W_\Gamma)$ is the subgroup of $\Aut(W_\Gamma)$ generated by inner conjugations by vertices in $\Gamma$. An \textbf{inner conjugation} by an element $v \in \Gamma$ is the map $\varphi_v: W_\Gamma \rightarrow W_\Gamma$ given by $\varphi_v(x) = vxv^{-1}$ for all $x \in W_\Gamma$. 
\end{definition}

Note that if $v$ and $w$ are adjacent vertices, meaning they commute as group elements, then the inner conjugation of $w$ by $v$ is $vwv^{-1} = wvv^{-1} = w$. Thus $\varphi_v$ is trivial on $St(v)$. 

\begin{definition}
    Let $v$ be a vertex of $\Gamma$ and let $C$ be a connected component of the subgraph $\Gamma \backslash St(v)$. The \textbf{partial conjugation} of $C$ by $v$ is the group action denoted $\chi_{v, C}$ and is defined:
    \[\chi_{v,C}(w) = 
    \begin{cases} 
     vwv^{-1} & \mbox{if } w \in C\\
     w & \mbox{if } w \notin C
     \end{cases}\]
      
    $\chi_{v,C}$ is a \textbf{non-trivial partial conjugation} if $C \subsetneqq \Gamma \backslash St(v)$, and it is a \textbf{trivial partial conjugation} if $C = \Gamma \backslash St(v)$, that is, $\chi_{v,C}$ is an inner conjugation of $\Gamma$ by $v$. We denote the set of all partial conjugations of $\Gamma$ by $\mathcal{P}$. 
\end{definition}

Let $C_1, ..., C_n$ be all disjoint connected components in $\Gamma \backslash St(v)$. Then the product of partial conjugations $\chi_{v,C_1} \circ \chi_{v,C_2} \circ ... \circ \chi_{v,C_n}$ is an inner conjugation of $\Gamma$ by $v$, thus a trivial partial conjugation. 

\begin{definition}
We now define relevant subgroups of $\Aut(W_\Gamma)$ as in \cite{Gutierrez_2012}.                          
\begin{itemize}
    \item The group $\Aut^0(W_\Gamma)$ is those automorphisms of $W_\Gamma$ which map each vertex $v \in V(\Gamma)$ to a conjugate of itself. 
    \item $\Aut^1(W_\Gamma)$ is those automorphisms of $W_\Gamma$ which map each maximal complete subgroup of $W_\Gamma$ to a maximal complete subgroup; in other words, they map maximal cliques of $\Gamma$ to maximal cliques. 
    \item $\Out(W_\Gamma)$ is the quotient $\Aut(W_\Gamma) / \Inn(W_\Gamma)$. 
    \item $\mathcal{O}ut^0(W_\Gamma)$ is the quotient $\Aut^0(W_\Gamma) / \Inn(W_\Gamma)$.
    \item $\Out^0(W_\Gamma)$ is the group generated by the set $\mathcal{P}^0 \subseteq \mathcal{P}$ of partial conjugations of $\Gamma$. 
\end{itemize}
\end{definition}

We will define the set $\mathcal{P}^0$ later on. 

\begin{lemma}[Corollary 4.5 \cite{Gutierrez_2012}] \label{cor4.5}
    The set $\mathcal{P}$ of partial conjugations of $\Gamma$ is a minimal generating set for $\Aut^0(W_\Gamma)$.
\end{lemma}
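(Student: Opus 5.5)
The statement has two parts: $\mathcal{P}$ generates $\Aut^0(W_\Gamma)$, and no proper subset of $\mathcal{P}$ does. I would handle them separately.

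\textbf{Generation.} First I would check that each $\chi_{v,C}$ really lies in $\Aut^0(W_\Gamma)$. The assignment respects the defining relations. Powers $w^{\textbf{m}(w)}$ are clearly preserved. For an edge $(w,w')$ there are two cases. If both endpoints lie in $C$, or both lie outside $C$, the images still commute. If $w \in C$ and $w' \notin C$, then $w'$ is adjacent to a vertex of $C$. Since $C$ is a full connected component of $\Gamma \setminus St(v)$, this forces $w' \in Lk(v)$, so $w'$ commutes with $v$ and hence with $vwv^{-1}$. The inverse map is $\chi_{v^{-1},C}$, which is a power of $\chi_{v,C}$. So $\chi_{v,C}$ is an automorphism sending each vertex to a conjugate of itself.

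For the converse, take $\phi \in \Aut^0(W_\Gamma)$ with $\phi(u) = g_u u g_u^{-1}$ for each vertex $u$. I would induct on the total length $\sum_u |g_u|$, choosing each $g_u$ of minimal length in the normal form for graph products. The step is a \emph{peak reduction}: find a partial conjugation $\chi_{v,C}^{\pm 1}$ such that $\chi_{v,C}^{\pm1}\circ\phi$ has strictly smaller total length.

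The candidate comes from a vertex $v$ that is the last letter of some $g_u$ of maximal length. One then uses the commutation relations: every $u'$ adjacent to $u$ has $\phi(u')$ commuting with $\phi(u)$. Centralizers of conjugates of vertex generators in graph products are well understood, and this lets one show that the vertices whose conjugator ends in $v$ (after reducing) form a union of components of $\Gamma \setminus St(v)$. Stripping $v$ from all of them is then a product of partial conjugations, and it lowers the length. When all lengths are zero, $\phi$ is the identity.

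This step is the main obstacle. It needs normal form and cancellation arguments in graph products, essentially the approach of Laurence and Corredor--Gutierrez, and I would cite those results for the length reduction rather than redo it.

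\textbf{Minimality.} Here I would abelianize. Consider the homomorphism $\Aut^0(W_\Gamma) \to \prod_{(u,v)} \Z/\textbf{m}(v)\Z$ that records, for each ordered pair of non-adjacent vertices, the exponent sum of $v$ in a chosen conjugator for $u$. This must be made well defined modulo the centralizer ambiguity, which I would handle by reducing conjugators modulo the centralizer of $u$.

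Under this map, $\chi_{v,C}$ is supported exactly on the pairs $\{(u,v) : u \in C\}$, and each such pair has nonzero entry $1$. For a fixed $v$, the sets $C$ are disjoint. Fix some $\chi_{v,C}$ and a vertex $u \in C$. Among all elements of $\mathcal{P}$, the coordinate $(u,v)$ is nonzero only for $\chi_{v,C}$. So the image of $\chi_{v,C}$ is not in the subgroup generated by the images of the other partial conjugations, and hence $\chi_{v,C}$ is not in the subgroup they generate.

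The delicate point is that $u$ may be non-adjacent to $v$ while its centralizer contains other vertices. Then the conjugator for $u$ is only defined up to multiplication by elements of $Lk(u)$ and by $u$ itself. Since $v \notin St(u)$, the $v$-exponent of the conjugator is still well defined once the ambiguity from $\langle St(u)\rangle$ is quotiented out. Checking this carefully is the one remaining technical verification.
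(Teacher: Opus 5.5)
The paper gives no proof of this lemma; it quotes it from \cite{Gutierrez_2012}, so there is no internal argument to match. Your check that each $\chi_{v,C}$ lies in $\Aut^0(W_\Gamma)$ is correct. Deferring generation to Laurence and Corredor--Gutierrez puts that half on the same footing as the paper's citation.

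Your sketch of the length reduction, however, is not right as written. Take $V(\Gamma)=\{u,v,w\}$ with the single edge $(u,v)$, and let $\phi$ be the inner automorphism by $vw$.
\begin{itemize}
\item The maximal reduced conjugators are $g_u=g_v=vw$, ending in $w$.
\item Since $\Gamma\setminus St(w)=\{u,v\}$ is connected, the only partial conjugation by $w$ is the inner one.
\item Composing that inner automorphism or its inverse on the left gives conjugation by $w^{\pm1}vw$. This raises the total length from $5$ to $8$.
\item The reduction does work on the other side: $\phi$ composed on the right with the inverse of conjugation by $w$ is conjugation by $v$.
\end{itemize}
In general one must also control the conjugator of $\phi(w)$ itself. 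So the citation, not the sketch, carries that half.

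For minimality you take a genuinely different route from anything in the paper, and it is correct. Two points should be written out.

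\emph{Well-definedness.} Two conjugators for $\phi(u)$ differ on the right by an element of $C_{W_\Gamma}(u)=W_{St(u)}$. The exponent-sum map $\epsilon_v\colon W_\Gamma\to\Z/\textbf{m}(v)\Z$ vanishes there because $v\notin St(u)$. Alternatively, retract onto $W_{\{u,v\}}\cong\Z/\textbf{m}(u)\Z*\Z/\textbf{m}(v)\Z$, where the centralizer of $u$ is just $\langle u\rangle$.

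\emph{Homomorphism.} Set $e_{u,v}(\phi)=\epsilon_v(g^\phi_u)$. From $(\phi\circ\psi)(u)=\phi(g^\psi_u)g^\phi_u\,u\,(\phi(g^\psi_u)g^\phi_u)^{-1}$ you get $e_{u,v}(\phi\circ\psi)=\epsilon_v(\phi(g^\psi_u))+\epsilon_v(g^\phi_u)$. The first term equals $\epsilon_v(g^\psi_u)$ only because every $\phi\in\Aut^0(W_\Gamma)$ induces the identity on the abelianization. You never state this, and it is exactly what removes the twist by $\phi$.

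With tools the paper does quote, minimality is a one-liner from \cref{alphabet} with $\Omega=\{v\}$. A word in $\mathcal{P}\setminus\{\chi_{v,C}\}$ spelling $\chi_{v,C}$ would collapse to a word in the $\chi_{v,C'}^{\pm1}$ with $C'\neq C$. All of these fix $C$ pointwise, whereas $\chi_{v,C}(u)=vuv^{-1}\neq u$ for $u\in C$. That route is shorter but rests on the deep rewriting theorem. Yours needs only the abelianization and a centralizer computation. It also shows more: each element of $\mathcal{P}$ has a private nonzero coordinate in an abelian quotient of $\Aut^0(W_\Gamma)$.
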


Combining results from \cite{Gutierrez_2012} and Laurence (\cite{Laurence_1995}), we obtain:
\begin{theorem} \label{theoremsplitting}
    If $W_\Gamma$ is a graph product of primary cyclic groups, then \[\Aut(W_\Gamma) = \Aut^0(W_\Gamma) \rtimes \Aut^1(W_\Gamma) = (\Inn(W_\Gamma) \rtimes \Out^0(W_\Gamma)) \rtimes \Aut^1(W_\Gamma)\] and $\Aut^1(W_\Gamma)$ is a finite group. 
\end{theorem}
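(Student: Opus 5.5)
The plan is to assemble the statement from the known structural results rather than prove anything from scratch. The outer decomposition $\Aut(W_\Gamma) = \Aut^0(W_\Gamma) \rtimes \Aut^1(W_\Gamma)$ with $\Aut^1(W_\Gamma)$ finite is essentially Laurence's (and Corredor--Gutierrez's) analysis of automorphisms of graph products of primary cyclic groups, so I would take it from \cite{Laurence_1995} and \cite{Gutierrez_2012}. The inner decomposition $\Aut^0(W_\Gamma) = \Inn(W_\Gamma) \rtimes \Out^0(W_\Gamma)$ is the contribution of \cite{Gutierrez_2012} and depends on the specific choice of $\mathcal{P}^0$. Concretely, I would prove three things.

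First, $\Aut^0(W_\Gamma)$ is normal in $\Aut(W_\Gamma)$ and $\Aut^1(W_\Gamma)$ is a complement. Normality is formal. If $\alpha \in \Aut(W_\Gamma)$ and $\beta \in \Aut^0(W_\Gamma)$, then $\alpha\beta\alpha^{-1}$ sends each vertex to a conjugate of itself, because $\alpha$ carries conjugacy classes to conjugacy classes.

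For the complement, the key input is Laurence's generation theorem: $\Aut(W_\Gamma)$ is generated by
\begin{itemize}
\item graph symmetries preserving $\textbf{m}$,
\item factor automorphisms $v \mapsto v^k$,
\item transvections/dominated transvections, and
\item partial conjugations.
\end{itemize}
In the primary cyclic setting each of these except the partial conjugations preserves the set of maximal finite (clique) subgroups up to conjugacy. One then shows that every automorphism can be composed with an element of $\Aut^0$ so as to send maximal cliques to maximal cliques, and that $\Aut^0 \cap \Aut^1$ is trivial. An automorphism fixing every vertex up to conjugacy while preserving the clique subgroups setwise must act trivially on vertices. Finiteness of $\Aut^1$ follows because such an automorphism is determined by its action on the finitely many finite clique subgroups, each of which is a finite group.

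Second, inside $\Aut^0(W_\Gamma)$, $\Inn(W_\Gamma)$ is normal. By \cref{cor4.5}, $\Aut^0$ is generated by $\mathcal{P}$, and the product of all partial conjugations by a fixed $v$ is the inner automorphism by $v$. I would define $\mathcal{P}^0$ by discarding, for each $v$ with $\Gamma\backslash St(v)$ nonempty, one chosen partial conjugation $\chi_{v,C_v}$, as in \cite{Gutierrez_2012}. Then $\Inn(W_\Gamma)$ together with $\mathcal{P}^0$ generates $\Aut^0$, so $\Aut^0 = \Inn \cdot \Out^0$.

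Third, I would show $\Inn(W_\Gamma) \cap \langle \mathcal{P}^0\rangle = 1$. This is the main obstacle. I would attack it by choosing $C_v$ canonically, for instance the component containing a fixed base vertex $v_0$, or $v_0$'s star-side when $v_0\in St(v)$. Every element of $\langle\mathcal{P}^0\rangle$ then fixes $v_0$, or at least conjugates $v_0$ only by elements of its centralizer. An inner automorphism by $g$ fixing $v_0$ forces $g \in C(v_0) = W_{St(v_0)}$. Iterating over the vertices of the other components, and using normal forms in graph products, forces $g$ into the center of $W_\Gamma$, and hence the automorphism is trivial.

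Making this iteration precise without the full normal-form machinery is where I expect difficulty. If it resists, I would fall back on citing the corresponding statement in \cite{Gutierrez_2012} directly.
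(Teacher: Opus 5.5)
Your overall route, assembling the statement from the Laurence/Corredor--Gutierrez generating set and the decomposition in \cite{Gutierrez_2012}, is the paper's route; the paper gives no argument beyond that citation. However, the one step you call ``formal'' is not, and your argument for it does not work. Knowing $\beta\in\Aut^0(W_\Gamma)$ only tells you that $\beta$ preserves the conjugacy class of each \emph{vertex}. For $\alpha\beta\alpha^{-1}(v)$ to be conjugate to $v$ you need $\beta(u)$ conjugate to $u$ for $u=\alpha^{-1}(v)$, which is an arbitrary finite-order element, not a vertex. The fact that $\alpha$ preserves conjugacy classes does not supply this. Your reasoning would apply verbatim to basis-conjugating automorphisms of a free group, where the conclusion is false. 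In $F_3$, take $\alpha\colon x_1\mapsto x_1x_2$ and $\beta\colon x_1\mapsto x_3x_1x_3^{-1}$, with other generators fixed. Then $\alpha\beta\alpha^{-1}(x_1)=x_3x_1x_2x_3^{-1}x_2^{-1}$ is cyclically reduced of length $5$, hence not conjugate to $x_1$. Normality genuinely uses finiteness of the vertex groups. Every finite-order element of $W_\Gamma$ is conjugate into a clique subgroup $W_\Delta$. Each $\chi_{x,C}$ restricts to $W_\Delta$ either as the identity or as conjugation by $x$, because a clique meeting $C$ lies in $C\cup St(x)$. So by \cref{cor4.5} every element of $\Aut^0(W_\Gamma)$ preserves the conjugacy class of every finite-order element, in particular of $\alpha^{-1}(v)$.

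There are two smaller issues. First, the generators other than partial conjugations should be characterized by sending each maximal clique subgroup \emph{onto} a maximal clique subgroup, not by preserving maximal finite subgroups ``up to conjugacy''. Every automorphism, partial conjugations included, does the latter, since the maximal finite subgroups are exactly the conjugates of maximal clique subgroups. Graph symmetries, factor automorphisms and dominated transvections do have the exact property; the transvections here only occur between adjacent vertices, since $uv$ has infinite order for non-adjacent $u,v$. So, once normality is fixed, the generating set gives $\Aut=\Aut^0\Aut^1$. Your finiteness argument uses the exact version, and your trivial-intersection argument is correct once made precise with the retraction $W_\Gamma\to W_{\Delta'}$ killing the vertices outside $\Delta'$.

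Second, your third step is not yet a proof. Reducing to $g\in W_{St(v_0)}$ is easy, but forcing $g$ into the center is the substantive content of \cite{Gutierrez_2012}. It needs something like the deletion property of \cref{alphabet}, applied inductively on the support of $g$; already ruling out $g=v_0$ uses it. Also, your rule for the omitted component when $v_0\in St(v)$ is undefined, since $v_0$ then lies in no component; use the minimal-vertex rule that defines $\mathcal{P}^0$. Falling back on \cite{Gutierrez_2012} there is exactly what the paper does. But then you should also cite normality, or prove it as above, rather than present the invalid formal argument.
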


\begin{remark}\label{remarkaut}
    What can we say about $\Inn(W_\Gamma)$ in this splitting? As remarked by the authors of \cite{charney2009automorphismgroupgraphproduct}, $\Inn(W_\Gamma)$ turns out to share roughly the structure of $W_\Gamma$. Recall that only vertices in $\Gamma \backslash Z(\Gamma)$ act in a non-trivial inner automorphism of $W_\Gamma$. Let $\Omega$ be the subgraph induced by $\Gamma \backslash Z(\Gamma)$, then $W_\Omega \cong \Inn(W_\Gamma)$ and $W_\Gamma = W_\Omega \times W_{Z(\Gamma)}$. $W_{Z(\Gamma)}$ is a direct product of finitely many finite groups, so $W_\Omega$ is a finite index subgroup of $W_\Gamma$. Therefore $\Inn(W_\Gamma)$ is isomorphic to a finite index subgroup of $W_\Gamma$ and $\Aut(W_\Gamma)$ is, up to finite index, a group of the form $W_\Gamma \rtimes \Out^0(W_\Gamma)$. 

\end{remark}

To understand $\mathcal{O}ut^0(W_\Gamma)$ as a subgroup of $\Out(W_\Gamma)$, we have the following lemma. 

\begin{lemma} \label{lemmaoutsplit}
    Let $\Gamma$ be a graph product of directly indecomposable groups. Then $\mathcal{O}ut^0(W_\Gamma)$ is isomorphic to a normal subgroup of $\Out(W_\Gamma)$ and \[\Out(W_\Gamma)/\ \mathcal{O}ut^0(W_\Gamma) \cong \Aut^1(W_\Gamma).\]
\end{lemma}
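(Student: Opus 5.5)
We derive this from the splitting in \cref{theoremsplitting}, using the third isomorphism theorem. The plan has three steps.

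First, show that $\Inn(W_\Gamma) \le \Aut^0(W_\Gamma)$. An inner automorphism sends every vertex $v$ to $gvg^{-1}$, which is a conjugate of $v$, so $\Inn(W_\Gamma) \subseteq \Aut^0(W_\Gamma)$. The quotient $\mathcal{O}ut^0(W_\Gamma)=\Aut^0(W_\Gamma)/\Inn(W_\Gamma)$ is therefore well defined, and it sits inside $\Out(W_\Gamma)=\Aut(W_\Gamma)/\Inn(W_\Gamma)$ as the image of $\Aut^0(W_\Gamma)$ under the quotient map $\pi\colon \Aut(W_\Gamma)\to\Out(W_\Gamma)$. Since $\ker\pi=\Inn(W_\Gamma)\subseteq \Aut^0(W_\Gamma)$, the restriction of $\pi$ identifies $\Aut^0(W_\Gamma)/\Inn(W_\Gamma)$ isomorphically with $\pi(\Aut^0(W_\Gamma))$.

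Second, check normality. By \cref{theoremsplitting}, $\Aut(W_\Gamma)=\Aut^0(W_\Gamma)\rtimes\Aut^1(W_\Gamma)$, so $\Aut^0(W_\Gamma)$ is normal in $\Aut(W_\Gamma)$. This can also be seen directly. If $\varphi\in\Aut^0(W_\Gamma)$ and $\psi\in\Aut(W_\Gamma)$, write $\psi^{-1}(v)$ as a word in the vertices. Then $\varphi$ sends each vertex in that word to a conjugate of itself, and one must argue that $\psi\varphi\psi^{-1}(v)$ is conjugate to $v$. This direct route is awkward, so I would rely on the cited splitting instead. The image of a normal subgroup under a surjective homomorphism is normal, so $\mathcal{O}ut^0(W_\Gamma)$ is normal in $\Out(W_\Gamma)$.

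Third, compute the quotient. By the third isomorphism theorem,
\[
\Out(W_\Gamma)/\mathcal{O}ut^0(W_\Gamma)\cong \Aut(W_\Gamma)/\Aut^0(W_\Gamma).
\]
From the semidirect product decomposition, $\Aut(W_\Gamma)=\Aut^0(W_\Gamma)\Aut^1(W_\Gamma)$ and $\Aut^0(W_\Gamma)\cap\Aut^1(W_\Gamma)=1$. The composite $\Aut^1(W_\Gamma)\hookrightarrow\Aut(W_\Gamma)\to\Aut(W_\Gamma)/\Aut^0(W_\Gamma)$ is therefore surjective with trivial kernel, which gives $\Out(W_\Gamma)/\mathcal{O}ut^0(W_\Gamma)\cong\Aut^1(W_\Gamma)$.

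The main obstacle is that the statement is phrased for graph products of directly indecomposable groups, while \cref{theoremsplitting} is stated for primary cyclic vertex groups. For the cases used in this paper, namely primary cyclic vertex groups, the argument above is complete. For the general statement one would need the corresponding splitting $\Aut=\Aut^0\rtimes\Aut^1$ from \cite{Gutierrez_2012} and \cite{Laurence_1995} in that generality; in particular, one needs $\Aut^0$ to be normal with $\Aut^1$ as a complement. Establishing that splitting is the nontrivial input.
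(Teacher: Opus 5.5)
Your proof is correct and follows the paper's route: $\Inn(W_\Gamma)\le\Aut^0(W_\Gamma)\trianglelefteq\Aut(W_\Gamma)$, then the third isomorphism theorem, then $\Aut(W_\Gamma)/\Aut^0(W_\Gamma)\cong\Aut^1(W_\Gamma)$ via the splitting of \cref{theoremsplitting}, and you supply the containment and complement details that the paper only asserts. Your caveat about generality is justified, since the paper tacitly uses the same splitting and the statement actually fails once $\Z$ vertex groups are allowed (for $F_2$, $\mathcal{O}ut^0(F_2)$ is trivial while $\Out(F_2)\cong GL_2(\Z)$ is not the finite group $\Aut^1(F_2)$), so primary cyclic vertex groups are the right scope.
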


\begin{proof}
    By definition, $\Out(W_\Gamma) = \Aut(W_\Gamma) / \Inn(W_\Gamma)$ and \newline $\mathcal{O}ut^0(W_\Gamma) = \Aut^0(W_\Gamma) / \Inn(W_\Gamma)$. We know that $\Inn(W_\Gamma)$ is a normal subgroup of $\Aut^0(W_\Gamma)$ and $\Aut^0(W_\Gamma)$ is a normal subgroup of $\Aut(W_\Gamma)$. By the third isomorphism theorem, the quotient group $\Aut(W_\Gamma)/\Inn(W_\Gamma) = \Out(W_\Gamma)$ has a normal subgroup isomorphic to the quotient group \newline $\Aut^0(W_\Gamma)/\Inn(W_\Gamma) = \mathcal{O}ut^0(W_\Gamma)$. The third isomorphism theorem also implies that \[(\Aut(W_\Gamma) / \Inn(W_\Gamma)) / (\Aut^0(W_\Gamma) / \Inn(W_\Gamma)) \cong \Aut(W_\Gamma) / \Aut^0(W_\Gamma)\] \[\Rightarrow \Out(W_\Gamma) / \mathcal{O}ut^0(W_\Gamma) \cong \Aut^1(W_\Gamma).\] 
\end{proof}

Only when $W_\Gamma$ is the graph product of primary cyclic groups do we get that $\Aut^0(W_\Gamma)$ is a finite index subgroup of $\Aut(W_\Gamma)$, which reduces all automorphisms of $W_\Gamma$ to conjugations of $\Gamma$, up to finite index. We emphasize that this is not the case for graph products of directly indecomposable cyclic groups in general. Moreover, the last statement of \cref{theoremfinite} (below) fails if $W_\Gamma$ is not a graph product of primary cyclic groups, which is why our results hold only for graph products of primary cyclic groups.

By \cref{lemmaoutsplit}, if $W_\Gamma$ is a graph product of primary cyclic groups, then $\mathcal{O}ut^0(W_\Gamma)$ is a finite index subgroup of $\Out(W_\Gamma)$; if $\mathcal{O}ut^0(W_\Gamma)$ is virtually $\Z$, then $\Out(W_\Gamma)$ is virtually $\Z$. And if $\Out(W_\Gamma)$ is virtually $\Z$, then each of its infinite subgroups, including $\Out^0(W_\Gamma)$, must be virtually $\Z$. Similarly, $\Out(W_\Gamma)$ is virtually abelian (or large) if and only if $\Out^0(W_\Gamma)$ is virtually abelian (respectively, large). For this reason, some statements in this paper may discuss $\Out(W_\Gamma)$ being virtually $\Z$, and others use that $\mathcal{O}ut^0(W_\Gamma)$ is virtually $\Z$, but all results tell us about the structure of $\Out(W_\Gamma)$. This fact and the usefulness of the set $\mathcal{P}^0$ motivates our study of the group $\mathcal{O}ut^0(W_\Gamma)$ in particular.

\begin{definition}
    A vertex $b \in V(\Gamma)$ is a \textbf{star cut point} if $\Gamma \backslash St(b)$ contains at least two distinct connected components. (See \cref{picturecutpoint}).
\end{definition}

Note that if $\Gamma$ contains the SIL $\{v_1,v_2|C\}$, then both $v_1$ and $v_2$ are star star cut points. The set of star cut points is exactly the set of vertices which act in a non-trivial partial conjugation. 

\begin{figure}
\centering
\begin{tikzpicture}
    \filldraw (2.5,2.5) circle (0.05) node[above]{$b$};
    \filldraw (4,3) circle (0.05);
    \filldraw (4,2) circle (0.05);
    \draw (4,3) -- (2.5,2.5) -- (4,2);
    \draw (4.5,2.5) ellipse (0.5 and 1.5) node{$C_2$};
    \filldraw (1,2) circle (0.05);
    \filldraw (1,3) circle (0.05);
    \draw (1,2) -- (2.5,2.5) -- (1,3);
    \draw (0.5,2.5) ellipse (0.5 and 1.5) node{$C_1$}; 
\end{tikzpicture}
\caption{Star cut point}    
\label{picturecutpoint}
\end{figure}
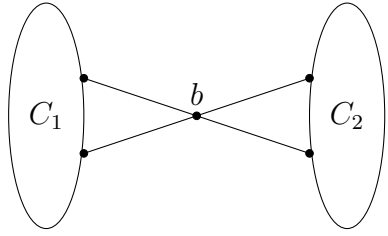

\begin{definition}
    \cite{Gutierrez_2012} defines the set $\mathcal{P}^0$ for $W_\Gamma$ as follows. Number all the vertices of $\Gamma$ in any manner. Note that each star cut point in $\Gamma$ is the acting vertex of at least two non-trivial partial conjugations of $W_\Gamma$. Take a star cut point $v \in V(\Gamma)$ and index the $n$ connected components of $\Gamma \backslash St(v)$ such that the component with the smallest numbered vertex is indexed $C_1$, the remaining component with the next smallest vertex is $C_2$, and so on (see an example in  \cref{pictureconstructP0}). Then we have the partial conjugations $\chi_{v, C_1}, \chi_{v, C_2}, ... ,\chi_{v, C_n}$ in $\Out^0(W_\Gamma)$. Now put all $\chi_{v, C_i}$ with $i \ge 2$ in the set $\mathcal{P}^0$. We get $n-1$ partial conjugations with acting vertex $v$ in $\mathcal{P}^0$. Repeat for each star cut point in $V(\Gamma)$. Recall that we define $\Out^0(W_\Gamma)$ as the set generated by $\mathcal{P}^0$. Intuitively, this construction stops short of generating an inner conjugation by any star cut point. If we included all the partial conjugations $\chi_{v,C_1}, ..., \chi_{v,C_n}$ in $\mathcal{P}^0$, then their product would be an inner conjugation, which is trivial as a partial conjugation. 
\end{definition}

Theorem 1.7 of \cite{Gutierrez_2012} shows that the isomorphism $\Out^0(W_\Gamma) \cong \mathcal{O}ut^0(W_\Gamma)$ holds regardless of how the vertices are numbered. For simplicity, we will treat them as the same group $\Out^0(W_\Gamma)$, so that we can consider the group generated by $\mathcal{P}^0$ as a finite index subgroup of $\Out(W_\Gamma)$, but we emphasize that they are different groups. Thus if we can find an appropriate numbering of the vertices of $\Gamma$, we can make certain partial conjugations be in $\mathcal{P}^0$ and prove structural properties of $\Out^0(W_\Gamma)$ and hence of $\Out(W_\Gamma)$. See a concrete example of the construction of $\mathcal{P}^0$ in \cref{examplemainthm}.

\begin{figure}
    \centering
    \begin{tikzpicture}
        \filldraw (5,5) circle (0.05) node[below]{$v_5$};
        \filldraw (7,6) circle (0.05) node[right]{$v_1$};
        \filldraw (3,6) circle (0.05) node[above]{$v_8$};
        \filldraw (3,4) circle (0.05) node[below]{$v_{10}$};
        \filldraw (7,4) circle (0.05) node[above]{$v_7$};
        \draw (7,6) -- (5,5) -- (7,4);
        \draw (3,6) -- (5,5) -- (3,4);
        \draw (8.3,7.5) circle (1);
        \node[above] at (9,8.2) {$C_3$};
        \filldraw (8,8) circle (0.05) node[above]{$v_{12}$};
        \draw (7,6) -- (8,8) -- (9,7);
        \filldraw (9,7) circle (0.05) node[above]{$v_6$};
        \draw (1.5,7.3) circle (1);
        \node[left] at (1,8.5) {$C_2$};
        \filldraw (1,6.5) circle (0.05) node[above]{$v_3$};
        \filldraw (2,8) circle (0.05) node[left]{$v_9$};
        \draw (3,6) -- (1,6.5) -- (2,8) -- (3,6);
        \draw (1.7,2.5) circle (1);
        \node at (1,3.6) {$C_1$};
        \filldraw (2,3) circle (0.05) node[above]{$v_2$};
        \filldraw (1,2) circle (0.05) node[above]{$v_4$};
        \draw (3,4) -- (2,3) -- (1,2);
        \draw (8.5,2.5) circle (1);
        \node at (9,3.6) {$C_4$};
        \filldraw (8.5,2.5) circle (0.05) node[above]{$v_{11}$};
        \draw (7,4) -- (8.5,2.5);
    \end{tikzpicture}
    \caption[An example of the construction of $\mathcal{P}^0$]{An example of the construction of $\mathcal{P}^0$. We consider some indexing of $V(\Gamma)$ and a star cut point $v_5 \in V(\Gamma)$.}
    \label{pictureconstructP0}
\end{figure}
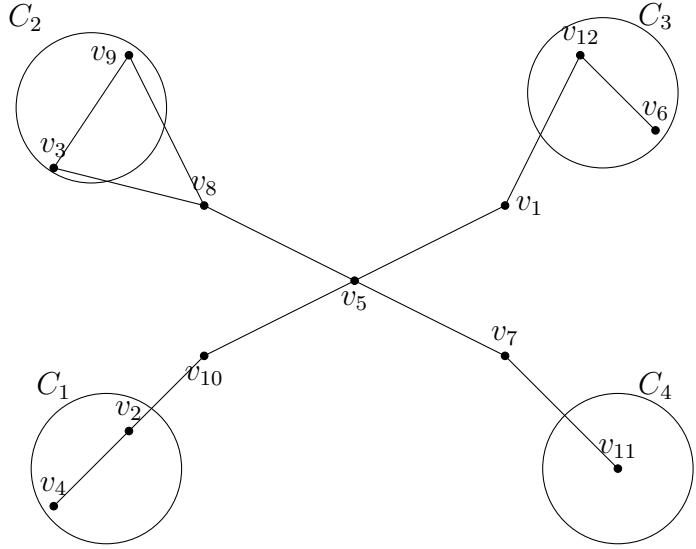

\subsection{Determining when $\Out(W_\Gamma)$ is finite or large}\label{Section3.2}

First, we concretely connect the presence of a SIL with a pair of partial conjugations of $\Gamma$. The statement and proof of this lemma are by Hikaru Jitsukawa.

\begin{lemma}\label{lemma2.2}
If $\Gamma$ contains a SIL $\{v_1,v_2|z\}$, then there is a connected component $C \subseteq \Gamma$ such that $z$ is in $C$, $v_1$ and $v_2$ are not in $C$, and both $\chi_{v_1, C}$ and $\chi_{v_2, C}$ are partial conjugations in $\mathcal{P}$.
\end{lemma}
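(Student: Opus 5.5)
Write $L = Lk(v_1)\cap Lk(v_2)$ and let $D$ be the connected component of $\Gamma\backslash L$ containing $z$ but neither $v_1$ nor $v_2$; it exists by the SIL condition. I will take $C$ to be the connected component of $\Gamma\backslash St(v_1)$ containing $z$. The first task is to show that $C = D$, which forces $C$ to also be the component of $\Gamma\backslash St(v_2)$ containing $z$. Then $\chi_{v_1,C}$ and $\chi_{v_2,C}$ are both partial conjugations in $\mathcal{P}$.

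\textbf{Step 1: $D$ avoids $St(v_1)\cup St(v_2)$.} First, $v_1,v_2\notin D$ by hypothesis. Suppose some $w\in D$ lies in $Lk(v_1)$. Then $w\notin L$, and $v_1\notin L$ since $v_1\notin Lk(v_1)$. Hence the edge $(w,v_1)$ lies in $\Gamma\backslash L$, which puts $v_1$ in $D$, a contradiction. The symmetric argument shows $D\cap Lk(v_2)=\emptyset$. So $D\subseteq \Gamma\backslash St(v_1)$, and since $D$ is connected, $D\subseteq C$.

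\textbf{Step 2: $C\subseteq D$.} Take $u\in C$ and a path from $z$ to $u$ inside $\Gamma\backslash St(v_1)$. Every vertex on this path avoids $Lk(v_1)\supseteq L$, so the path lies in $\Gamma\backslash L$ and hence in $D$. Thus $C=D$. Running the same argument with $v_2$ in place of $v_1$ (using $L\subseteq Lk(v_2)$ and Step 1) shows that $D$ is also exactly the component of $\Gamma\backslash St(v_2)$ containing $z$.

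\textbf{Step 3: conclusion.} By definition, $C$ is a connected component of $\Gamma\backslash St(v_1)$ and of $\Gamma\backslash St(v_2)$. So $\chi_{v_1,C}$ and $\chi_{v_2,C}$ are partial conjugations, and both lie in $\mathcal{P}$. The conditions $z\in C$ and $v_1,v_2\notin C$ hold by construction.

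The main point needing care is Step 1: one has to note that $v_1$ itself is not in $L$, so that an edge from $D$ to $v_1$ really survives in $\Gamma\backslash L$. The hypothesis $d(v_1,v_2)\ge 2$ is not needed for this lemma beyond making the SIL well defined.
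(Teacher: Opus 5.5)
Your proof is correct and rests on the same observation as the paper's: a path avoiding $St(v_i)$ automatically avoids $L=Lk(v_1)\cap Lk(v_2)$, while a vertex of the $z$-component lying in $Lk(v_i)\setminus L$ would join $v_i$ to $z$ in $\Gamma\setminus L$. The organization differs, however.

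The paper takes the components $C_1,C_2$ of $\Gamma\setminus St(v_1)$ and $\Gamma\setminus St(v_2)$ containing $z$, and argues by contradiction that $C_1=C_2$. A path from some $y\in C_1\setminus C_2$ to $z$ avoiding $St(v_1)$ must meet $St(v_2)$ at some $a$, and the tail from $a$ to $z$ then connects $v_2$ to $z$ in $\Gamma\setminus L$. You instead prove directly that each of these components equals the SIL component $D$. Strictly, your Step 1 should precede the definition of $C$, since it is what guarantees $z\notin St(v_1)$.

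Your route gives a sharper conclusion. The paper only shows that the two star-complement components coincide. You show that the component of $\Gamma\setminus L$ named in the SIL is itself a component of both $\Gamma\setminus St(v_1)$ and $\Gamma\setminus St(v_2)$. This is what the paper tacitly uses later when it writes $\chi_{v,C_v}$ for a SIL $\{v,v'|C_v\}$ in \cref{prop}, or $\chi_{v_1,Z}$ in the proof of \cref{theoremmain}.

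One correction to your closing remark: $d(v_1,v_2)\ge 2$ is not idle. It puts $v_2$ in $\Gamma\setminus St(v_1)$ but outside $C$ (and symmetrically for $v_1$), so $\chi_{v_1,C}$ and $\chi_{v_2,C}$ are non-trivial partial conjugations. The lemma as stated does not ask for this, but the paper relies on it immediately afterwards.
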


\begin{proof}

Suppose there is a vertex $z \in V(\Gamma)$, a connected component $C_1 \subseteq \Gamma \backslash Lk(v_1)$, and a connected component $C_2 \subseteq \Gamma \backslash Lk(v_2)$ such that $z \in C_1 \cap C_2$. For the sake of contradiction, assume $C_1 \ne C_2$, and without loss of generality, assume there is $y \in C_1 \backslash C_2$. Since $y \in C_1$, there is a path $\gamma$ in $\Gamma$ between $y$ and $z$ such that no point passes through $St(v_1)$. But since $y \notin C_2$, $\gamma$ must intersect $St(v_2)$ at some point $a$. Then there is a path $\gamma'$ between $a$ and $z$ which does not intersect $St(v_1)$. But this means that $v_2$ and $z$ are in the same path component of $\Gamma \backslash (Lk(v_1) \cap Lk(v_2))$, contradicting our assumption of a SIL. Therefore we must have that $C_1 = C_2$, and $\chi_{v_1,C}$ and $\chi_{v_2,C}$ are well-defined. (See \cref{pictureforlem2.2}).

\begin{figure}
    \centering
    \begin{tikzpicture}
        \filldraw (2,8) circle (0.05) node[above]{$v_1$};
        \filldraw (2,2) circle (0.05) node[below]{$v_2$};
        \filldraw (1,5) circle (0.05);
        \filldraw (2,5) circle (0.05);
        \filldraw (3,5) circle (0.05);
        \draw (2,8) -- (1,5) -- (2,2);
        \draw (2,8) -- (2,5) -- (2,2);
        \draw (2,8) -- (3,5) -- (2,2);
        \draw (2,5) ellipse (1.75 and 0.5);
        \node at (0.5,5.75) {$Lk(v_1) \cap Lk(v_2)$};
        \draw (6.5,6) ellipse (2 and 2.5);
        \node[above] at (8,8) {$C_1$};
        \filldraw (4,8) circle (0.05);
        \filldraw (5.5,7.5) circle (0.05);
        \draw (2,8) -- (4,8) -- (5.5,7.5);
        \draw (6,4.5) ellipse (2 and 2.5);
        \filldraw (5,5.5) circle (0.05);
        \draw (3,5) -- (5,5.5);
        \node[below] at (7,2) {$C_2$};
        \filldraw (4,2) circle (0.05);
        \filldraw (5,3) circle (0.05);
        \draw (2,2) -- (4,2) -- (5,3);
        \filldraw (6,4) circle (0.05) node[below]{$z$};
        \filldraw (6.5,6) circle (0.05) node[left]{$a$};
        \filldraw (7,8) circle (0.05) node[left]{$y$};
        \draw (7,8) -- (6.5,6) -- (6,4);
        \node at (6,5) {$\gamma'$};
        \draw[red][thick] (2,2) -- (6.5,6);
    \end{tikzpicture}
    \caption[Picture for \cref{lemma2.2}]{Picture for \cref{lemma2.2}. We prove that the red edge connecting $v_2$ and $a$ must exist.}
    \label{pictureforlem2.2}
\end{figure}

\end{proof}

This confirms that each SIL in $\Gamma$ corresponds to a pair of non-trivial partial conjugations in $\mathcal{P}$. A useful result from (\cite{Gutierrez_2012}) connects the presence of a SIL with the cardinality of $\Out(W_\Gamma)$.

\newpage

\begin{theorem} [Corollary 1.11 \cite{Gutierrez_2012}] \label{theoremfinite}
    If $W_\Gamma$ is a graph product of primary cyclic groups, then the following are equivalent:
    \begin{enumerate}
        \item $\Out^0(W_\Gamma)$ is an abelian group.
        \item $\Gamma$ does not contain a SIL.
        \item $\Out(W_\Gamma)$ is finite. 
    \end{enumerate}
\end{theorem}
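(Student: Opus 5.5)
We prove the cycle of implications $(2)\Rightarrow(1)\Rightarrow(3)\Rightarrow(2)$, working throughout with the generating set $\mathcal{P}^0$ of $\Out^0(W_\Gamma)$. By \cref{lemmaoutsplit} and \cref{theoremsplitting}, $\Out^0(W_\Gamma)$ has finite index in $\Out(W_\Gamma)$. So finiteness of $\Out(W_\Gamma)$ is equivalent to finiteness of $\Out^0(W_\Gamma)$.

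\emph{$(2)\Rightarrow(1)$.} It suffices to show that any two generators $\chi_{v,C},\chi_{w,D}\in\mathcal{P}^0$ commute in $\Out^0(W_\Gamma)$.

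If $v=w$, then $C$ and $D$ are disjoint components of $\Gamma\backslash St(v)$, and both maps act by conjugation by the same $v$. They commute directly.

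If $v$ and $w$ are adjacent, then $v$ and $w$ commute. A case check on whether a vertex lies in $C$, in $D$, or in both shows the commutator fixes every vertex.

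If $d(v,w)\ge 2$, the absence of a SIL is the key input. Every component of $\Gamma\backslash(Lk(v)\cap Lk(w))$ contains $v$ or $w$. I would use this to show the following.
\begin{itemize}
\item Either $C\cap D=\emptyset$, or one of $C,D$ contains the other's acting vertex.
\item More precisely, after possibly replacing $\chi_{v,C}$ by its complement, we may assume $w\notin C$ and $v\notin D$. In that case $C\cap D=\emptyset$.
\end{itemize}
Disjoint supports, with acting vertices outside the other component, give commuting automorphisms. Complementing changes $\chi_{v,C}$ only by an inner automorphism, so the maps commute in $\Out$.

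The delicate point is the nested case, where $w\in C$. Then $D$ (or its complement) sits inside $C$, and conjugating by $v$ on $C$ maps $\chi_{w,D}$ to $\chi_{w,D}$ composed with an inner conjugation. This still gives commutation modulo $\Inn$.

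\emph{$(1)\Rightarrow(3)$.} Every element of $\mathcal{P}^0$ has finite order $\textbf{m}(v)$, since $\chi_{v,C}^{\textbf{m}(v)}$ is conjugation by $v^{\textbf{m}(v)}=1$. A finitely generated abelian group generated by torsion elements is finite. So $\Out^0(W_\Gamma)$ is finite, and hence $\Out(W_\Gamma)$ is finite by the finite-index remark above.

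\emph{$(3)\Rightarrow(2)$.} Suppose $\Gamma$ has a SIL $\{v_1,v_2|z\}$. By \cref{lemma2.2} there is a component $C$ with $z\in C$ and $v_1,v_2\notin C$, for which both $\chi_{v_1,C}$ and $\chi_{v_2,C}$ are partial conjugations. I would show that $\chi_{v_1,C}\chi_{v_2,C}$ has infinite order in $\Out(W_\Gamma)$. Its $k$-th power acts on $z$ by conjugation by $(v_1v_2)^k$ and fixes $v_1$ and $v_2$.

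Suppose some power were inner, say conjugation by $g$. Then $g$ centralizes $v_1$ and $v_2$, and $g(v_1v_2)^{-k}$ centralizes $z$. Since $v_1$ and $v_2$ are non-adjacent, $v_1v_2$ has infinite order. Using normal forms in graph products, centralizers of vertices are $\langle St(v)\rangle$, and this forces $(v_1v_2)^k\in\langle St(v_1)\cap St(v_2)\rangle\cdot\langle St(z)\rangle$. That is impossible because $z$ is separated from $v_1$ and $v_2$ by $Lk(v_1)\cap Lk(v_2)$.

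This normal-form step, along with the nested case in $(2)\Rightarrow(1)$, is where I expect the main obstacle to lie.
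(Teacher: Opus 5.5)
Your proposal is correct. There is, however, no proof in the paper to compare it with: the paper quotes this statement as Corollary 1.11 of \cite{Gutierrez_2012} and gives no argument, so what you have is a self-contained route.

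Measured against the tools the paper does use, your $(2)\Rightarrow(1)$ is a hand-proof of the easy direction of \cref{lemma1.4}. With no SIL, that lemma says at once that any two partial conjugations with distinct acting vertices commute in $\Out(W_\Gamma)$. Your $(3)\Rightarrow(2)$ also supplies a fact the paper asserts without proof in \cref{theorem3.5}: that $\chi_{v_1,C}\chi_{v_2,C}$ has infinite order in $\Out(W_\Gamma)$, which is what $A\cong D_\infty$ requires.

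The two steps you flag as obstacles are easier than you expect.

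\emph{The dichotomy when $d(v,w)\ge 2$.}
\begin{enumerate}
\item Take $z\in C\cap D$. Since there is no SIL, there is a path from $z$ to $v$ or $w$ inside $\Gamma\backslash(Lk(v)\cap Lk(w))$. Stop at its first vertex lying in $St(v)\cup St(w)$.
\item Since $St(v)\cap St(w)=Lk(v)\cap Lk(w)$, that vertex lies either in $St(v)\backslash St(w)$, which forces $v\in D$, or in $St(w)\backslash St(v)$, which forces $w\in C$.
\end{enumerate}

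\emph{The complement step.} The ``complement'' of $\chi_{v,C}$ is the product $\prod_{C'\ne C}\chi_{v,C'}$, which equals $\chi_{v,C}^{-1}$ modulo $\Inn(W_\Gamma)$. You may need to complement both $\chi_{v,C}$ and $\chi_{w,D}$, not just the first. Once you do, every pair of factors has disjoint supports with acting vertices outside the other support. So the nested case is redundant. In fact, when $D\subseteq C$ and $v\notin D$, the two maps already commute in $\Aut(W_\Gamma)$.

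\emph{The normal-form step.} You only need that $g$ centralizes $v_1$.
\begin{enumerate}
\item This gives $g\in\langle St(v_1)\rangle$, hence $(v_1v_2)^k\in\langle St(v_1)\rangle\langle St(z)\rangle\subseteq\langle St(v_1)\cup St(z)\rangle$.
\item The reduced word $(v_1v_2)^k$ contains the letter $v_2$.
\item $v_2\notin St(v_1)$, and $v_2\notin St(z)$, because an edge $v_2z$ would put $v_2$ in the component of $z$ in $\Gamma\backslash(Lk(v_1)\cap Lk(v_2))$.
\item By the normal form theorem, reduced words for elements of $\langle S\rangle$ use only letters of $S$, which gives the contradiction.
\end{enumerate}
No intersection-of-special-subgroups fact is needed.
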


Therefore, in our search for graphs $\Gamma$ with $\Out(W_\Gamma)$ virtually $\Z$, we know that we need $\Gamma$ to contain at least one SIL. A logical next question is: how many Coxeter SILs can $\Gamma$ contain before $\Out^0(W_\Gamma)$ is virtually abelian and not virtually $\Z$? We will address the number of SILs more in Chapter 4. Now, there are two related graph properties that Susse and Sale (\cite{sale2017outerautomorphismgroupsrightangled}) have proven to make $\Out(W_\Gamma)$ large.

\begin{definition}
    Three vertices $v_1, v_2, v_3 \in V(\Gamma)$ form a \textbf{Separating Triple Intersection of Links (STIL)} if
    \begin{itemize}
        \item The subgraph spanned by $\{v_1,v_2,v_3\}$ contains at most one edge; and
        \item There is a connected component $C \subseteq \Gamma \backslash (L_{v_1} \cap L_{v_2} \cap L_{v_3})$ such that $v_1, v_2,$ and $v_3$ are not in $C$.
    \end{itemize}

See \cref{pictureSTIL}.
\end{definition}

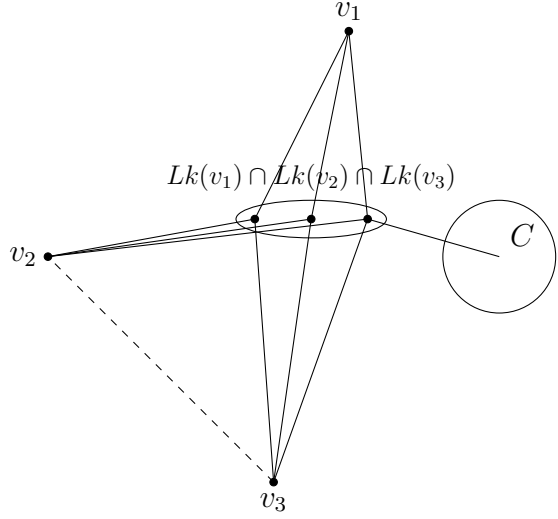
\begin{figure}
    \centering
    \begin{tikzpicture}
    \draw (3.5,3.5) ellipse (1 and 0.25) node[scale=0.85][above=0.3 cm]{$Lk(v_1) \cap Lk(v_2) \cap Lk(v_3)$}; 
    \filldraw (2.75,3.5) circle (0.05);
    \filldraw (3.5,3.5) circle (0.05);
    \filldraw (4.25,3.5) circle (0.05);
    \draw (6,3) circle (0.75) node[above right]{$C$};
    \draw (4.25,3.5) -- (6,3);
    \filldraw (4,6) circle (0.05) node[above]{$v_1$};
    \draw (4,6) -- (2.75,3.5);
    \draw (4,6) -- (3.5,3.5);
    \draw (4,6) -- (4.25,3.5);
    \filldraw (0,3) circle (0.05) node[left]{$v_2$};
    \draw (0,3) -- (2.75,3.5);
    \draw (0,3) -- (3.5,3.5);
    \draw (0,3) -- (4.25,3.5);
    \filldraw (3,0) circle (0.05) node[below]{$v_3$};
    \draw (3,0) -- (2.75,3.5);
    \draw (3,0) -- (3.5,3.5);
    \draw (3,0) -- (4.25,3.5);
    \draw[dashed] (3,0) -- (0,3);
    \end{tikzpicture}
    \caption{STIL (Separating Triple Intersection of Links)}
    \label{pictureSTIL}
\end{figure}

\begin{definition}
    Three vertices $v_1, v_2, v_3 \in V(\Gamma)$ form a \textbf{Flexible Separating Intersection of Links (FSIL)} if $\{v_i,v_j\ |\ v_k\}$ forms a SIL for $\{i,j,k\} = \{1,2,3\}$. 

\end{definition}

\begin{theorem}[Theorem 2 \cite{sale2017outerautomorphismgroupsrightangled}] \label{theoremlarge} 
    Let $W_\Gamma$ be a graph product of primary cyclic groups. Then the following are equivalent:
    \begin{enumerate}
        \item $\Out(W_\Gamma)$ is large;
        \item $\Out(W_\Gamma)$ is not virtually abelian;
        \item $\Gamma$ contains a STIL, an FSIL, or a non-Coxeter SIL. 
    \end{enumerate}
    In particular, $\Out(W_\Gamma)$ obeys a strict dichotomy: it is either virtually abelian or large. 
\end{theorem}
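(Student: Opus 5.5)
The plan is to prove the cycle (1) $\Rightarrow$ (2) $\Rightarrow$ (3) $\Rightarrow$ (1). By \cref{lemmaoutsplit} and \cref{theoremsplitting}, $\Out^0(W_\Gamma)$ has finite index in $\Out(W_\Gamma)$, so every statement can be checked on $\Out^0(W_\Gamma)$ with generating set $\mathcal{P}^0$.

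\textbf{Step (1) $\Rightarrow$ (2).} This is formal. Suppose a finite index subgroup $H$ surjects onto $F_2$, and suppose for contradiction that $\Out(W_\Gamma)$ contains a finite index abelian subgroup $A$. Then $A\cap H$ has finite index in $H$. Its image in $F_2$ is an abelian subgroup of finite index. But abelian subgroups of $F_2$ are cyclic and have infinite index, which is a contradiction.

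\textbf{Step (3) $\Rightarrow$ (1).} I would use a restriction homomorphism. For a set of vertices $\Delta$, let $N$ be the normal closure of $V(\Gamma)\setminus\Delta$. Every element of $\Aut^0(W_\Gamma)$ sends each vertex to a conjugate of itself, so it preserves $N$. This gives a homomorphism $\Aut^0(W_\Gamma)\to \Aut(W_\Delta)$, which descends to $\Out^0(W_\Gamma)\to\Out(W_\Delta)$. Under it, a partial conjugation $\chi_{v,C}$ maps either to a partial conjugation of $W_\Delta$ or to the identity. It suffices to show that the image is large, since a surjection onto a large group makes the source large. The three configurations are handled as follows.
\begin{enumerate}
\item For a non-Coxeter SIL $\{v_1,v_2|z\}$, take $\Delta=\{v_1,v_2,z\}$, which spans no edges, using \cref{lemma2.2}. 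The image contains the conjugations of $z$ by $v_1$ and by $v_2$. Modulo inner automorphisms these generate a copy of $\Z/m_1 * \Z/m_2$, which is large by the Example.
\item For a STIL, take $\Delta=\{v_1,v_2,v_3,z\}$ with $z\in C$. Conjugating $z$ by $v_1,v_2,v_3$ generates, up to finite index, a free product of three finite cyclic groups, which is again large. Here I must account for the possible single edge among the $v_i$.
\item For an FSIL, take $\Delta=\{v_1,v_2,v_3\}$. The image is a finite index subgroup of $\Out(\Z/m_1*\Z/m_2*\Z/m_3)$, and this group is virtually free of rank at least $2$.
\end{enumerate}
In each case the care lies in identifying the image exactly, not merely a free subgroup inside it.

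\textbf{Step (2) $\Rightarrow$ (3).} This is the main obstacle; I argue by contrapositive. Assume there is no STIL, no FSIL and no non-Coxeter SIL. I would first show that two partial conjugations $\chi_{v,C},\chi_{w,D}\in\mathcal{P}^0$ commute unless $\{v,w|C\}$ is a SIL with $C=D$, extending the commutation analysis behind \cref{theoremfinite}. Next, the absence of STILs and FSILs should force the SIL pairs $\{\chi_{v_1,C},\chi_{v_2,C}\}$ to be pairwise disjoint and to commute with all other generators, after a suitable numbering of the vertices that fixes which components are indexed $C_1$. Since every SIL is Coxeter, each such pair consists of involutions and generates a quotient of $\Z/2*\Z/2\cong D_\infty$. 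Then $\Out^0(W_\Gamma)$ is a quotient of a direct product of copies of $D_\infty$ with a finitely generated abelian group, hence virtually abelian. The delicate part is verifying that no hidden non-commuting pairs survive. I expect this to require a case analysis on how the components of $\Gamma\setminus St(v)$ and $\Gamma\setminus St(w)$ nest, and I would attack it first by showing that any failure of commutation produces a third vertex forming a STIL or an FSIL.
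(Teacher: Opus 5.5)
The paper gives no proof of this theorem; it quotes it from Sale--Susse, so your outline has to stand on its own. Step (1)$\Rightarrow$(2) and the reduction to $\Out^0(W_\Gamma)$ are fine. The overall architecture (projection homomorphisms for largeness, a commutation analysis for virtual abelianness) is reasonable. The other two steps have gaps.

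\textbf{The main gap is in (2)$\Rightarrow$(3).} Your first claim is that partial conjugations in $\mathcal{P}^0$ commute unless they act on the same component of a SIL. This contradicts \cref{lemma1.4}, which lists three further non-commuting configurations with $C\neq D$. No numbering removes them, even when there is no STIL, FSIL or non-Coxeter SIL.

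For a counterexample, let $\Gamma$ be the pentagon with edges $v_1a,ab,bv_2,v_2c,cv_1$, joined by the edge $cd$ to the square with edges $de,eg,gf,fd$, with all labels $2$. Its only SILs are $\{v_1,v_2|C\}$ with $C=\{d,e,f,g\}$, and $\{e,f|D\}$ with $D=\{v_1,a,b,v_2,c\}$. It has no STIL or FSIL. The relevant star complements are:
\begin{itemize}
\item $\Gamma\setminus St(v_1)=\{b,v_2\}\sqcup C$ and $\Gamma\setminus St(v_2)=\{a,v_1\}\sqcup C$;
\item $\Gamma\setminus St(e)=D\sqcup\{f\}$ and $\Gamma\setminus St(f)=D\sqcup\{e\}$.
\end{itemize}
By \cref{lemma1.4}, $\mathcal{P}^0$ contains a non-commuting pair acting on different components unless $\chi_{v_1,C},\chi_{v_2,C},\chi_{e,D},\chi_{f,D}$ all lie in $\mathcal{P}^0$. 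But $\chi_{v_1,C}\in\mathcal{P}^0$ forces $\min\{b,v_2\}<\min C$ in the numbering, and $\chi_{e,D}\in\mathcal{P}^0$ forces $f<\min D$. These are incompatible, since $f\in C$ and $b,v_2\in D$.

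Choosing the omitted component independently at each vertex does not fix this in general either. Take two pentagons $v\,a_i\,w_i\,q_i\,p_i$ ($i=1,2$) glued at $v$, with a pendant vertex $z_i$ attached to $a_i$. Its SILs are $\{v,w_i|z_i\}$ and $\{a_i,p_i|w_j\}$ for $j\neq i$, and there is no STIL or FSIL. Here $w_1,w_2$ lie in different components $E_1,E_2$ of $\Gamma\setminus St(v)$. Any generating set omitting one component per vertex keeps some $\chi_{v,E_i}$. By conditions (3) and (4) of \cref{lemma1.4}, that element fails to commute with both partial conjugations by $w_i$.

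So the isolation of SIL pairs as commuting copies of $D_\infty$, on which your conclusion rests, is not available. Sale and Susse instead prove that $(\Out^0(W_\Gamma))'$ is abelian (\cref{prop2.2}). Together with finiteness of the abelianization, this gives virtual abelianness. They treat disconnected $\Gamma$ separately (\cref{prop2.3}). Any replacement argument must control all four configurations of \cref{lemma1.4}.

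\textbf{(3)$\Rightarrow$(1) is also incomplete}, as you suspect. Exhibiting $\Z/m_1*\Z/m_2$ or $F_2$ inside the image says nothing about largeness.

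When $W_\Delta$ has three free factors this is repairable. $\Out$ of a free product of three finite groups is virtually free. A finitely generated subgroup of a virtually free group that is not virtually cyclic is large. This covers the non-Coxeter SIL, the FSIL, and a STIL whose vertices span an edge, where $W_\Delta\cong\Z/m_1*(\Z/m_2\times\Z/m_3)*\Z/m_z$.

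For a STIL spanning no edge, however, $W_\Delta$ is a free product of four finite cyclic groups. Then $\Out(W_\Delta)$ contains $\Z^2$, so it is not virtually free, and your argument yields nothing. This case cannot be avoided. Join $v_1,v_2,v_3,z$ to a common vertex $a$, add paths $v_1\,p\,v_2$ and $v_2\,p'\,v_3$, and take all labels $2$. The only witness to (3) is then the edgeless STIL $\{v_1,v_2,v_3|z\}$. Here you must actually identify the image, or map it onto a virtually free group, and the proposal does neither.
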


\section{Connected graphs with two distinct Coxeter SILs}

In this chapter, we look at the case where $\Gamma$ is a connected graph containing two distinct Coxeter SILs. As we will see, the partial conjugations resulting from these SILs generate an infinite subgroup of $\Out^0(W_\Gamma)$ which is not virtually $\Z$. A connected graph $\Gamma$ can have two SILs $\{v_1,v_2|C\}$ and $\{w_1,w_2|D\}$ with overlapping acting vertices (say, $v_1 = w_1$ and/or $v_2 = w_2$) and still have $\Out(W_\Gamma)$ be virtually abelian; this is not true in a disconnected graph. For this reason, it is practical to split some of our proofs into connected and disconnected.  

The layout of this chapter and the outline of the proof of \cref{theorem3.5} are written by Hikaru Jitsukawa at Tufts. I edited these components and added details where necessary.

\cref{lemma2.2} allowed us to associate a SIL in $\Gamma$ with a pair of partial conjugations in $\Out^0(W_\Gamma)$. The following lemma extends that statement to two or more SILs. 

\begin{lemma}\label{prop}
    Let $W_\Gamma$ be a graph product of primary cyclic groups, where $\Gamma$ is a connected graph containing two distinct Coxeter SILs $\{v, v'|C_v\}$ and $\{w,w'|C_w\}$. Denote the corresponding partial conjugations $\chi_{v,C_v}, \chi_{v',C_v}, \chi_{w,C_w}$, and $\chi_{w',C_w}$, as in \cref{lemma2.2}. Then all four of these partial conjugations are in $\Out^0(W_\Gamma)$. 
    
\end{lemma}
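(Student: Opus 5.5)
The plan is to exploit the freedom in constructing $\mathcal{P}^0$: by Theorem 1.7 of \cite{Gutierrez_2012}, the group generated by $\mathcal{P}^0$ is $\Out^0(W_\Gamma)$ up to isomorphism, for any numbering of $V(\Gamma)$. So it suffices to choose a numbering in which, for each acting vertex $u \in \{v,v',w,w'\}$, the relevant component ($C_v$ for $u \in\{v,v'\}$, $C_w$ for $u\in\{w,w'\}$) is \emph{not} the component $C_1$ of $\Gamma\backslash St(u)$. That component is the one containing the smallest-numbered vertex. Then each of the four partial conjugations lies in $\mathcal{P}^0$ and hence in $\Out^0(W_\Gamma)$.

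The main step is to find, for each $u$, a component of $\Gamma\backslash St(u)$ other than its SIL component, and to give a vertex of that component the smallest label. By \cref{lemma2.2}, $C_v$ is a component of both $\Gamma\backslash St(v)$ and $\Gamma\backslash St(v')$. It does not contain $v'$, and $v' \notin St(v)$ since $d(v,v')\ge 2$. Hence $v'$ lies in a different component of $\Gamma\backslash St(v)$, and symmetrically $v$ lies outside $C_v$ in $\Gamma\backslash St(v')$. Likewise $w'$ and $w$ witness other components for $w$ and $w'$. The natural choice is to give label $1$ to a single vertex $x$ lying in none of $C_v, C_w, St(v), St(v'), St(w), St(w')$. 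Failing that, we can use a small number of early labels, arranged so that for each acting vertex $u$ the first labelled vertex not in $St(u)$ lies outside $u$'s SIL component.

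The expected obstacle is the case analysis for when the two SILs interact. Examples are $v=w$ with $C_v \ne C_w$, or $C_v$ containing $w$. Then the naive choice $x=v'$ might lie in $C_w$ or in some star. My first approach is to order the labels as $v, v', w, w'$ first, in a carefully chosen order, and check each acting vertex. For $v$, the first listed vertex outside $St(v)$ is $v'$ if $v$ itself is skipped: $v\in St(v)$, so $v$ is never in a component of $\Gamma\backslash St(v)$. Since $v'\notin C_v$, we get $\chi_{v,C_v}\in\mathcal{P}^0$.

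For $w$, the first vertex outside $St(w)$ among $v, v', w'$ must avoid $C_w$. Here I would use distinctness together with the SIL structure. Suppose $v \in C_w$. Then $v$ is separated from $w,w'$ by $Lk(w)\cap Lk(w')$, and $C_w$ is a component of $\Gamma\backslash St(w)$ containing $v$. I would then argue that $v'$ is either in $C_w$ or in $St(w)$, and swap the order accordingly. Connectedness of $\Gamma$ is needed here: it ensures the components containing $v'$ or $w'$ are genuinely distinct from the SIL components. The Coxeter hypothesis is not used beyond matching \cref{lemma2.2}.

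If the ordering argument breaks in some configuration, the fallback is to pick the label-$1$ vertex to be $w'$ or $v'$, whichever lies outside the other SIL's component, and verify the four conditions directly.
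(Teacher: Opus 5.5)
Your plan needs one numbering of $V(\Gamma)$ under which all four partial conjugations lie literally in $\mathcal{P}^0$. Under the hypotheses of the lemma such a numbering need not exist, and the failure occurs exactly in the shared-acting-vertex case you postpone. Let $\Gamma=K_{1,3}$ with center $\ell$ and leaves $x,y,z$, all of order $2$. This graph is connected, and $\{x,y|z\}$ and $\{x,z|y\}$ are distinct Coxeter SILs. So $v=w=x$, $C_v=\{z\}$, $C_w=\{y\}$, and the lemma asserts that both $\chi_{x,\{z\}}$ and $\chi_{x,\{y\}}$ lie in $\Out^0(W_\Gamma)$. But $\Gamma\backslash St(x)$ has exactly two components, $\{y\}$ and $\{z\}$. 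For every numbering one of them is $C_1$, so exactly one of these two partial conjugations belongs to $\mathcal{P}^0$. Neither reordering $v,v',w,w'$ nor your fallback can fix this. More generally, if $v=w$ and $\Gamma\backslash St(v)=C_v\sqcup C_w$, one checks that $Lk(v)\cap Lk(v')=Lk(v)\cap Lk(w')=Lk(v')\cap Lk(w')$ and that $\{v,v',w'\}$ is an FSIL, which the lemma does not exclude. The same graph also refutes your intermediate claim: with $\{w,w'|C_w\}=\{x,y|z\}$ and $\{v,v'|C_v\}=\{z,y|x\}$, we have $v=z\in C_w$ but $v'=y\notin C_w\cup St(x)$.

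The missing idea is the relation that makes the numbering irrelevant. Let $C_1,\dots,C_n$ be the components of $\Gamma\backslash St(u)$. Then $\chi_{u,C_1}\circ\cdots\circ\chi_{u,C_n}$ is conjugation by $u$, hence trivial in $\Out(W_\Gamma)$, and these maps commute. So the one partial conjugation by $u$ omitted from $\mathcal{P}^0$ equals $(\chi_{u,C_2}\circ\cdots\circ\chi_{u,C_n})^{-1}$ in $\Out(W_\Gamma)$, and therefore lies in $\langle\mathcal{P}^0\rangle$. This is the paper's proof. It works for an arbitrary fixed numbering, needs no case analysis, and in fact shows that every partial conjugation lies in $\Out^0(W_\Gamma)$. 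It also avoids a logical problem in your first step: an abstract isomorphism between the groups generated by $\mathcal{P}^0$ for two different numberings does not by itself transfer membership of a specific element $\chi_{u,C}$.
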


\begin{proof}
    Assume $\mathcal{P}^0$ has been constructed with an arbitrary ordering of the vertices of $\Gamma$. By definition, $\Out^0(W_\Gamma) = \langle \mathcal{P}^0 \rangle$. Suppose that $\chi_{v,C_v}$ is not in $\mathcal{P}^0$. Since $v$ is a star cut point, there is at least one partial conjugation in $\mathcal{P}^0$ with acting vertex $v$. In fact, every other partial conjugation with acting vertex $v$ lies in $\mathcal{P}^0$, by the construction of $\mathcal{P}^0$; denote these $\chi_{v,C_2}, ..., \chi_{v,C_n}$. Inner conjugations are trivial in $\Out(W_\Gamma)$, so the composition of automorphisms $\chi_{v,C_v} \circ \chi_{v,C_2} \circ ... \circ \chi_{v,C_n} = 1$ in $\Out(W_\Gamma)$. Then $\chi_{v,C_v} = (\chi_{v,C_2} \circ ... \circ \chi_{v,C_n})^{-1}$ in $\Out(W_\Gamma)$. $(\chi_{v,C_2} \circ ... \circ \chi_{v,C_n})^{-1}$ is in $\Out^0(W_\Gamma)$,  therefore $\chi_{v,C_v}$ is in $\Out^0(W_\Gamma)$. 

    The same argument shows that $\chi_{v',C_v}, \chi_{w,C_w}$, and $\chi_{w',C_w}$ are in $\Out^0(W_\Gamma)$.
\end{proof}

We will see that these four partial conjugations make $\Out^0(W_\Gamma)$ ``too big" to be virtually $\Z$. First, we state a few more definitions and auxiliary lemmas.

\begin{lemma}[Lemma 1.7 \cite{sale2017outerautomorphismgroupsrightangled}] \label{lemma1.7}
    Suppose that $\Gamma$ is connected and contains the SILs $\{x_1,x_2|Z\}$ and $\{x_1,x_3|Z'\}$. If $z \in Z \cap Z'$, then $\{x_1,x_2,x_3|z\}$ is a STIL. 
\end{lemma}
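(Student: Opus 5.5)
We need to show that $\{x_1,x_2,x_3\}$ spans at most one edge, and that the component of $z$ in $\Gamma\setminus(Lk(x_1)\cap Lk(x_2)\cap Lk(x_3))$ avoids all three vertices.

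\emph{Edge count.} Since $\{x_1,x_2|Z\}$ and $\{x_1,x_3|Z'\}$ are SILs, $d(x_1,x_2)\ge 2$ and $d(x_1,x_3)\ge 2$. So $x_1$ is adjacent to neither $x_2$ nor $x_3$, and the only possible edge among the three is $(x_2,x_3)$. This settles the first condition of a STIL.

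\emph{Separation.} Write $L=Lk(x_1)\cap Lk(x_2)\cap Lk(x_3)$, and let $E$ be the component of $z$ in $\Gamma\setminus L$. Suppose for contradiction that some $x_i\in E$. Choose a path $\gamma$ from $z$ to $x_i$ that avoids $L$, and stop it at the first vertex of $\{x_1,x_2,x_3\}\cup Lk(x_1)$ that it meets.

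I would first apply \cref{lemma2.2} to both SILs. This gives that $Z$ is a full component of $\Gamma\setminus St(x_1)$, and of $\Gamma\setminus St(x_2)$. Likewise $Z'$ is a full component of $\Gamma\setminus St(x_1)$ and of $\Gamma\setminus St(x_3)$. Since $z\in Z\cap Z'$ and both are components of $\Gamma\setminus St(x_1)$ containing $z$, we get $Z=Z'$.

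Now consider the part of $\gamma$ before it first hits $St(x_1)$. This part lies in $Z$, so its next vertex $a$ lies in $Lk(x_1)$ and is adjacent to $Z$. Because $Z$ is also a component of $\Gamma\setminus St(x_2)$, the vertex $a$ lies in $St(x_2)$. It cannot equal $x_2$, since $x_2\notin Lk(x_1)$. Hence $a\in Lk(x_2)$, and by the same argument with $x_3$, $a\in Lk(x_3)$. Therefore $a\in L$, contradicting the choice of $\gamma$.

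If instead $\gamma$ never meets $St(x_1)$, then $\gamma$ stays inside $Z$ and so cannot reach $x_2$ or $x_3$, which lie outside $Z$. It cannot reach $x_1$ either, since $x_1\in St(x_1)$. So no $x_i$ lies in $E$.

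The main obstacle is this boundary step: showing that every neighbour of $Z$ in $St(x_1)$ lies in the triple intersection $L$. That is exactly where \cref{lemma2.2} is needed, to identify $Z$ as simultaneously a component of the complement of each relevant star. Connectivity of $\Gamma$ is not essentially used beyond guaranteeing that $Z$ has a nonempty boundary. Note also that $E\subseteq Z$, which is consistent with the conclusion.
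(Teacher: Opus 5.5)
Your proof is correct and complete. Since the paper states this lemma without proof (it is quoted from Sale--Susse), there is no in-paper argument to compare with, and your argument supplies a self-contained one. The key move is to identify $Z=Z'$ as a common component of $\Gamma\setminus St(x_i)$ for $i=1,2,3$. Then every vertex outside $Z$ adjacent to $Z$ lies in $Lk(x_1)\cap Lk(x_2)\cap Lk(x_3)$. In fact this shows that the component of $z$ in $\Gamma\setminus L$ is exactly $Z$.

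Two small remarks.

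\emph{Use of \cref{lemma2.2}.} As stated, \cref{lemma2.2} only produces \emph{some} component $C\ni z$ of both $\Gamma\setminus St(x_1)$ and $\Gamma\setminus St(x_2)$. Concluding $C=Z$ needs one more line, and it is cleaner to prove this directly.
\begin{itemize}
\item $Z\cap Lk(x_1)=\emptyset$: a vertex of $Lk(x_1)$ lying in $Z$ would join $x_1$ to $Z$ inside $\Gamma\setminus(Lk(x_1)\cap Lk(x_2))$. Hence $Z\subseteq\Gamma\setminus St(x_1)$.
\item Conversely, $\Gamma\setminus St(x_1)\subseteq\Gamma\setminus(Lk(x_1)\cap Lk(x_2))$, so the component of $z$ in $\Gamma\setminus St(x_1)$ lies in $Z$.
\end{itemize}
This also avoids relying on the rather sketchy proof of \cref{lemma2.2}. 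The fact that the first vertex $a$ of $\gamma$ in $St(x_1)$ is not $x_1$ is likewise immediate: $a$ has a neighbour in $Z$, and $Z\cap Lk(x_1)=\emptyset$.

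\emph{Connectivity.} Connectivity of $\Gamma$ is not used at all. If $Z$ has no neighbours outside itself, your second case already finishes the argument.
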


\begin{definition}
Let $S$ be a set. A \textbf{word} in $S$ is a product $x_{i_1}^{a_1} x_{i_2}^{a_2}...x_{i_n}^{a_n}$ for some elements $x_{i_1}, ..., x_{i_n}$ of $S$, where $x_{i_j} \ne x_{i_{j+1}}$, and some non-zero integers $a_1,...,a_n$. We call $S$ an \textbf{alphabet}.
\end{definition}

\begin{lemma}[Lemma 2.4 \cite{Gutierrez_2012}] \label{subgraph}
    Let $\Omega$ be a subgraph of $\Gamma$. The natural map $W_\Omega \rightarrow W_\Gamma$ is an embedding.
\end{lemma}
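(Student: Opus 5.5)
The statement is that for an induced subgraph $\Omega \subseteq \Gamma$, the homomorphism $W_\Omega \to W_\Gamma$ sending each vertex generator to itself is injective. My approach is to produce a retraction. Define $\rho: W_\Gamma \to W_\Omega$ on generators by
\[\rho(v) = \begin{cases} v & \text{if } v \in V(\Omega),\\ 1 & \text{if } v \notin V(\Omega).\end{cases}\]
If $\rho$ is a well-defined homomorphism, then $\rho \circ \iota = \mathrm{id}_{W_\Omega}$, where $\iota: W_\Omega \to W_\Gamma$ is the natural map. Since $\iota$ has a left inverse, it is injective.

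The first step checks that $\iota$ is well defined. Each relator of $W_\Omega$ is either $v_i^{\textbf{m}(i)}$ with $v_i \in V(\Omega)$, or $[v_j,v_k]$ with $(v_j,v_k) \in E(\Omega) \subseteq E(\Gamma)$. Both are relators of $W_\Gamma$, so $\iota$ respects the presentation.

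The second step checks that $\rho$ is well defined, again relator by relator.
\begin{itemize}
\item A power relator $v_i^{\textbf{m}(i)}$ maps to $v_i^{\textbf{m}(i)}$ if $v_i \in V(\Omega)$, which is a relator of $W_\Omega$. Otherwise it maps to $1$.
\item A commutator $[v_j,v_k]$ with $(v_j,v_k)\in E(\Gamma)$ and both endpoints in $V(\Omega)$ has $(v_j,v_k) \in E(\Omega)$, because $\Omega$ is induced. Its image is therefore a relator of $W_\Omega$.
\item If at least one endpoint lies outside $\Omega$, the image is a commutator with a trivial entry, hence trivial.
\end{itemize}
By von Dyck's theorem $\rho$ extends to a homomorphism. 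It fixes each generator of $W_\Omega$, so $\rho\circ\iota$ is the identity.

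The only point needing care is the use of inducedness in the second bullet; this is exactly where the standing convention that subgraphs are induced is needed. The argument uses nothing specific to primary cyclic vertex groups, so it works for any graph product.
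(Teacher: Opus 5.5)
Your proof is correct and complete. The inclusion $\iota$ is well defined because $E(\Omega)\subseteq E(\Gamma)$. The retraction $\rho$ is well defined exactly because $\Omega$ is induced. Since $\rho\circ\iota=\mathrm{id}_{W_\Omega}$, $\iota$ is injective.

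The paper gives no proof of this lemma. It imports the statement as Lemma 2.4 of \cite{Gutierrez_2012}, so there is no in-paper argument to compare yours against.

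Your retraction argument is the most elementary route. It needs only von Dyck's theorem, and it proves slightly more than stated: $W_\Omega$ is a retract of $W_\Gamma$. The other standard route uses the normal form theorem for graph products, which shows that a reduced word in the vertices of $\Omega$ stays reduced, hence nontrivial, in $W_\Gamma$. That route needs much heavier machinery but gives word-level information your argument does not.

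One sharpening of your final remark: inducedness is needed for the statement itself, not just for your proof. Suppose $\Omega$ omits an edge $(u,v)$ of $\Gamma$ and the vertex groups of $u$ and $v$ are nontrivial. Then $[u,v]$ is trivial in $W_\Gamma$ but nontrivial in $W_\Omega$. To see the latter, retract $W_\Omega$ onto the free product $\langle u\rangle * \langle v\rangle$. So the natural map would not be injective.
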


Thus, taking any subgraph $\Omega \subseteq \Gamma$, we get a subgroup $W_\Omega$ of $W_\Gamma$, which is itself a graph product of primary cyclic groups. 

\begin{lemma}[Corollary 1.6 \cite{Gutierrez_2012}] \label{alphabet}

Let $\Gamma$ be a graph and let $\Omega \subseteq \Gamma$ be an induced subgraph. Let $\mathcal{P}_\Omega$ be the set of partial conjugations of $\Gamma$ by a vertex in $\Omega$. If $\varphi \in \Aut^0(W_\Gamma)$ and if there exist words $z_1, ..., z_n \in W_\Omega$ such that $\varphi(v_j) = z_jv_jz_j^{-1}$ for each $1 \le j \le n$, then any word for $\varphi$ in the alphabet $\mathcal{P}^{\pm1}$ can be rewritten as a word in the alphabet $\mathcal{P}_\Omega^{\pm1}$ (a word which still spells $\varphi$) by simply omitting those generators not in $\mathcal{P}_\Omega^{\pm1}$.
    
\end{lemma}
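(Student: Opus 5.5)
The plan is to compare the automorphism spelled by a word with the one spelled by its truncation through \emph{retractions} of $W_\Gamma$ onto special subgroups. For $\Delta\subseteq\Gamma$ let $\rho_\Delta\colon W_\Gamma\to W_\Delta$ send each vertex of $\Delta$ to itself and every other vertex to $1$. This is a well-defined homomorphism, since every defining relator of $W_\Gamma$ is sent to a relator of $W_\Delta$ or to $1$. By \cref{subgraph} it is a retraction onto the subgroup $W_\Delta\le W_\Gamma$.

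Fix a word $\varphi=p_1\circ\cdots\circ p_k$ with $p_i\in\mathcal P^{\pm1}$. Let $\varphi_\Omega$ be the word obtained by deleting every $p_i$ whose acting vertex is not in $\Omega$. First I would set up a bookkeeping of conjugators. Inductively on $i$, for $\psi_i=p_1\circ\cdots\circ p_i$ I assign explicit words $g^{(i)}_j$ with $\psi_i(v_j)=g^{(i)}_j v_j (g^{(i)}_j)^{-1}$, using the following rule.
\begin{itemize}
\item Start with $g^{(0)}_j=1$.
\item If $p_{i+1}=\chi_{u,C}^{\pm1}$ and $v_j\in C$, set $g^{(i+1)}_j=g^{(i)}_u u^{\pm1}(g^{(i)}_u)^{-1}g^{(i)}_j$.
\item Otherwise set $g^{(i+1)}_j=g^{(i)}_j$.
\end{itemize}
This rule comes from computing $\psi_i(u v_j u^{-1})$. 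The same rule applied to the truncated word produces words $h_j$ with $\varphi_\Omega(v_j)=h_jv_jh_j^{-1}$.

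The key step is to show by induction on $i$ that the truncated conjugators are exactly $\rho_\Omega(g^{(i)}_j)$. Deleting a letter $\chi_{u,C}^{\pm1}$ with $u\notin\Omega$ matches the recursion after applying $\rho_\Omega$, because $\rho_\Omega(u)=1$ turns the update $g_u u^{\pm1}g_u^{-1}g_j$ into $\rho_\Omega(g_j)$. Letters with $u\in\Omega$ are kept, and they update both recursions compatibly since $\rho_\Omega$ is a homomorphism. Hence $\varphi_\Omega(v_j)=\rho_\Omega(g_j)v_j\rho_\Omega(g_j)^{-1}$, where $g_j=g^{(k)}_j$.

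It then remains to show that $\rho_\Omega(g_j)v_j\rho_\Omega(g_j)^{-1}=z_jv_jz_j^{-1}$, given $g_jv_jg_j^{-1}=z_jv_jz_j^{-1}$ with $z_j\in W_\Omega$.
\begin{itemize}
\item If $v_j\in\Omega$, apply $\rho_\Omega$ to this equality. Since $\rho_\Omega$ fixes $v_j$ and $z_j$, we are done.
\item If $v_j\notin\Omega$, this is the main obstacle, because $\rho_\Omega$ kills $v_j$.
\end{itemize}
For this case I would use the finer retraction $\rho_j=\rho_{\Omega\cup\{v_j\}}$. It gives $\rho_j(g_j)v_j\rho_j(g_j)^{-1}=z_jv_jz_j^{-1}$, so $z_j^{-1}\rho_j(g_j)$ centralises $v_j$. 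The task is then to show that $\rho_j(g_j)$ and $\rho_\Omega(g_j)$ differ by an element of $C(v_j)$ on the right, modulo the rewriting.

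The first thing I would try is to track the occurrences of $v_j^{\pm1}$ in $\rho_j(g_j)$. Such letters enter $g_j$ only through updates by letters $\chi_{v_j,D}^{\pm1}$. Those never conjugate $v_j$ itself, since $v_j\notin D$, so the letters $v_j^{\pm1}$ appear only inside subwords of the form $\rho_j(g_u)v_j^{\pm1}\rho_j(g_u)^{-1}$. I would then use the normal form for graph products (as in \cite{Gutierrez_2012}) to argue that, in the reduced form of $z_j^{-1}\rho_j(g_j)\in C(v_j)=W_{St(v_j)}$, each such conjugate of $v_j$ either cancels or can be moved to the right end. Its deletion, which is exactly the difference between $\rho_j$ and $\rho_\Omega$, then changes nothing modulo $C(v_j)$. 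If that bookkeeping becomes unwieldy, the fallback is to invoke directly the normal-form/partial-conjugation relations underlying Corollary 1.6 of \cite{Gutierrez_2012}, namely its presentation of $\Aut^0(W_\Gamma)$. With either route, $\varphi_\Omega$ and $\varphi$ agree on every vertex, so the truncated word spells $\varphi$.
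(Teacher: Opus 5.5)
Your reduction is correct. The conjugator recursion, the induction showing that the truncated word has conjugators $\rho_\Omega(g_j)$, and hence $\varphi_\Omega(v_j)=\rho_\Omega(g_j)v_j\rho_\Omega(g_j)^{-1}$, all check out. (The paper gives no proof of this lemma; it quotes it from \cite{Gutierrez_2012}.)

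The gap is the final step for $v_j\notin\Omega$, which you leave open. Your plan is to track the letters $v_j^{\pm1}$ in $\rho_j(g_j)$ and show they ``cancel or can be moved to the right end''. That is a restatement of what must be proved, not an argument for it. The fallback of invoking the machinery behind Corollary 1.6 of \cite{Gutierrez_2012} is circular, since that corollary is the statement you are proving.

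The missing observation is short, and it needs neither $\rho_j$ nor the case split. Put $c_j=z_j^{-1}g_j$. From $g_jv_jg_j^{-1}=z_jv_jz_j^{-1}$ you get $c_j\in C(v_j)=W_{St(v_j)}$, a fact you already quoted; only the inclusion $C(v_j)\subseteq W_{St(v_j)}$ is needed. Now apply the homomorphism $\rho_\Omega$:
\begin{itemize}
\item It fixes $z_j\in W_\Omega$, so $\rho_\Omega(g_j)=z_j\,\rho_\Omega(c_j)$.
\item It sends every generator to itself or to $1$, so $\rho_\Omega(c_j)\in W_{St(v_j)\cap\Omega}\le W_{St(v_j)}$, which still commutes with $v_j$.
\end{itemize}
Hence $\rho_\Omega(g_j)v_j\rho_\Omega(g_j)^{-1}=z_jv_jz_j^{-1}=\varphi(v_j)$ for every $j$, whether or not $v_j\in\Omega$, and the truncated word spells $\varphi$.

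In other words, changing a conjugator by an element of the centralizer of $v_j$ is harmless after applying $\rho_\Omega$, because $\rho_\Omega$ maps $W_{St(v_j)}$ into itself. With this step inserted, your argument is a complete, self-contained proof.
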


\begin{proposition}\label{theorem3.5}
    Let $W_\Gamma$ be a graph product of primary cyclic groups. If $\Gamma$ is connected and contains two or more distinct Coxeter SILs, then $\Out^0(W_\Gamma)$ is not virtually $\Z$.
\end{proposition}

\begin{proof}

This is an edited version of Hikaru Jitsukawa's proof. If $\Out(W_\Gamma)$ is not virtually abelian, then clearly $\Out^0(W_\Gamma)$ is not virtually $\Z$. So we assume $\Out(W_\Gamma)$ is virtually abelian. We take any two distinct SILs in $\Gamma$ and denote them $\{v_1,v_2|C\}$ and $\{w_1,w_2|D\}$. By \cref{prop}, $\chi_{v_1,C}, \chi_{v_2,C}, \chi_{w_1,D}$, and $\chi_{w_2,D}$ are in $\Out^0(W_\Gamma)$. Define the subgroups $A = \langle \chi_{v_1,C}, \chi_{v_2,C} \rangle$ and $B = \langle \chi_{w_1,D}, \chi_{w_2,D} \rangle$ of $\Out^0(W_\Gamma)$. Note that $A \cong \Z/2\Z * \Z/2\Z \cong D_\infty$ and \newline $B \cong \Z/2\Z * \Z/2\Z \cong D_\infty$. We will show that $A$ and $B$ have trivial intersection.

Suppose, for the sake of contradiction, that there is a non-trivial element $\varphi \in A \cap B$. First, we consider the case where $\{v_1,v_2\} \cap \{w_1,w_2\} = \emptyset$. Since $\varphi \in A$, there are elements $\chi_1,...,\chi_n \in A$, with each $\chi_i$ equal to either $\chi_{v_1,C}$ or $\chi_{v_2,C}$, such that $\varphi = \chi_1 \circ ...\circ \chi_n$. Then for some $v \in V(\Gamma)$, we can write $\varphi(v) = g_ng_{n-1}...g_1(v)g_1...g_{n-1}g_n$, where each $g_i$ is either $v_1$ or $v_2$. 

Since $\varphi \in B$, we can also write $\varphi(v) = h_mh_{m-1}...h_1(v)h_1...h_{m-1}h_m$, where each $h_i$ is either $w_1$ or $w_2$. By \cref{alphabet}, the word $h_m...h_1(v)h_1...h_m$ can be rewritten in the alphabet $\mathcal{P}_{\{v_1,v_2\}}^{\pm1}$, by omitting any generators besides $v_1$ and $v_2$. But then we delete every letter of $h_m...h_1(v)h_1...h_m$ and are left with the empty word. Similarly, we can rewrite the word $g_n...g_1(v)g_1...g_n$ in the alphabet $\mathcal{P}_{\{w_1,w_2\}}^{\pm1}$, and are left with the empty word. Thus $\varphi(v) = v$ for all $v \in V(\Gamma)$, so $\varphi$ is the identity on $\Gamma$. 

Now suppose the two SILs share at least one vertex; without loss of generality, suppose $v_1 = w_1$. By \cref{lemma1.7}, if $C \cap D \ne \emptyset$, then $\Gamma$ contains a STIL, so by \cref{theoremlarge}, $\Out(W_\Gamma)$ is large. So we may assume $C \cap D = \emptyset$. If the two SILs share both acting vertices, so $v_1=w_1$ and $v_2=w_2$, then we also must have $C \cap D = \emptyset$ for the SILs to be distinct. 

For some vertex $v$, if $v$ is in $\Gamma \backslash (C \cup D)$, then $\varphi(v) = v$. If $v \in C$, then since $\varphi \in B$, it is a word in partial conjugations acting on $D$, so $\varphi(v) = v$. On the other hand, if $v \in D$, then since $\varphi \in A$, it is a word in partial conjugations acting on $C$, so $\varphi(v) = v$. Thus $\varphi$ is the identity on $\Gamma$. 

Then in all cases, we have $A \cap B = \{\text{id}\}$. Now suppose for the sake of contradiction that $\Out^0(W_\Gamma)$ has a finite index subgroup $K \cong \Z$. Since $A$ and $B$ are both infinite subgroups in $\Out^0(W_\Gamma)$, then both $A \cap K$ and $B \cap K$ are infinite subgroups of finite index in $\Out^0(W_\Gamma)$. $A \cap K$ and $B \cap K$ must also have non-trivial intersection. On the other hand, $(A \cap K) \cap (B \cap K) = (A \cap B) \cap K = \{\id\}$, which is a contradiction. Therefore $\Out^0(W_\Gamma)$ is not virtually $\Z$. 
\end{proof}

\section{Disconnected graphs}

We study the disconnected case separately. As we will see, 
when $\Gamma$ is disconnected, we have to place stronger restrictions on the structure of $\Gamma$ to keep $\Out^0(W_\Gamma)$ (or equivalently $\Out(W_\Gamma))$ from being large. We can then describe the family of disconnected graphs with exactly one SIL, and prove that in this case, a more precise statement holds than for the connected case: $\Out^0(W_\Gamma) \cong D_\infty$.

\subsection{Summarizing a proof by Susse and Sale}

Since our proof of \cref{lemma5} will be an application of the methods Susse and Sale use to prove their Proposition 2.3 in \cite{sale2017outerautomorphismgroupsrightangled}, it is worthwhile to summarize their proof here.

\begin{proposition}[Proposition 2.3 \cite{sale2017outerautomorphismgroupsrightangled}] \label{prop2.3}
    Suppose $W_\Gamma$ is a right-angled Coxeter group defined by a disconnected graph $\Gamma$ which contains no FSIL or STIL. Then $\Out^0(W_\Gamma)$ is a virtually abelian right-angled Coxeter group. 
\end{proposition}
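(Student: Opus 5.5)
We plan to describe $\Out^0(W_\Gamma)$ explicitly through the generating set $\mathcal{P}^0$. Write $\Gamma = \Gamma_1 \sqcup \dots \sqcup \Gamma_k$ with $k \ge 2$ connected components. For a vertex $v \in \Gamma_i$, the components of $\Gamma \backslash St(v)$ are the components of $\Gamma_i \backslash St(v)$ together with the whole components $\Gamma_j$ for $j \ne i$. So every vertex not central in its own component is a star cut point.

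The first step is to choose a vertex numbering that puts a convenient partial conjugation outside $\mathcal{P}^0$. Since the isomorphism type of $\Out^0(W_\Gamma)$ does not depend on the numbering (Theorem 1.7 of \cite{Gutierrez_2012}), we may number the vertices so that, for each acting vertex $v \in \Gamma_i$, the omitted partial conjugation $\chi_{v,C_1}$ is a canonical one, say the one acting on the component containing the smallest-numbered vertex of some fixed $\Gamma_j$ with $j\neq i$.

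The second step is to compute relations among the remaining generators. Each generator is an involution, because $W_\Gamma$ is right-angled Coxeter and $\chi_{v,C}^2 = \chi_{v^2,C} = 1$. Two partial conjugations $\chi_{v,C}$ and $\chi_{w,D}$ commute in $\Out$ in the standard cases: when $v,w$ are adjacent, when $C \cap D = \emptyset$ and $v \notin D$, $w \notin C$, or when one support contains the other together with the other's acting vertex. The generic case is that two partial conjugations satisfy no relation, and in that case $v,w$ with a common component form a SIL. We then use the hypotheses to bound the interactions. By \cref{lemma1.7}, three vertices pairwise sharing a separated component give a STIL, and FSILs are excluded. Following Susse and Sale, this lets us show that every pair of generators either commutes or generates $D_\infty$. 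In particular, no three generators interact as a free product of three $\Z/2\Z$'s. The key check is that, after discarding the generators killed by the inner automorphisms, the relations of $\Out^0(W_\Gamma)$ are exactly squares and commutators. This would present $\Out^0(W_\Gamma)$ as a right-angled Coxeter group $W_\Delta$, whose graph $\Delta$ has vertex set $\mathcal{P}^0$ and an edge whenever two generators commute. I would prove that the presentation is complete by checking that Gutierrez–Piggott–Ruane's presentation of $\Out^0$, restricted to $\mathcal{P}^0$, reduces to these relations.

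The third step is virtual abelianness. A right-angled Coxeter group $W_\Delta$ is virtually abelian exactly when the complement graph $\Delta^c$ has no vertex of valence at least two. Equivalently, $\Delta$ is a join of edges' complements, so $W_\Delta$ is a product of copies of $\Z/2\Z$ and $D_\infty$. A path of length two in $\Delta^c$ is a triple $\chi_1,\chi_2,\chi_3$ with $\chi_2$ failing to commute with both $\chi_1$ and $\chi_3$. Each non-commuting pair comes from a SIL sharing a separated component. A generator non-commuting with two others therefore forces either two SILs that share a vertex and a component, which by \cref{lemma1.7} gives a STIL, or the configuration of an FSIL. Both are excluded, so $\Delta^c$ is a matching and $W_\Delta$ is virtually $\Z^m$.

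The main obstacle is the second step: proving that no relations beyond commutation arise. I would first try to verify this with \cref{alphabet}, restricting words to pairs of acting vertices and reading off the resulting restrictions. The disconnectedness matters here, because in that case the inner automorphisms have already been removed through the choice of $C_1$.
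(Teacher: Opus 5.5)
Your Step 2 is the whole content of the proposition, and it is not proved. Saying that $\Out^0(W_\Gamma)$ is the right-angled Coxeter group on $\mathcal{P}^0$, with an edge for each commuting pair, is exactly the claim that there are no relations beyond squares and commutators. You defer this (``I would prove\dots'').

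Knowing that each pair of generators either commutes or generates $D_\infty$ does not determine the group, since relations can involve many generators at once. Your reason why disconnectedness helps is not a reason either. The choice of $C_1$ is made the same way for connected graphs, and it only discards one redundant generator per star cut point; it says nothing about relations among the generators that remain. For connected graphs without STIL or FSIL, the corresponding statement, \cref{prop2.2}, only gives that $(\Out^0(W_\Gamma))'$ is abelian.

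Step 3 has a similar problem. You infer that a generator failing to commute with two others forces two SILs sharing a vertex \emph{and} a component. This does not follow for general graphs: as the paper remarks at the start of its Section 4, SILs may share an acting vertex while having disjoint components. Also, \cref{lemma1.7} is stated only for connected $\Gamma$.

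The missing ingredient is the structure of $\Gamma$, which the paper establishes first.
\begin{enumerate}
\item If $\Gamma$ had three components $\Gamma_1,\Gamma_2,\Gamma_3$, then vertices $x_i\in\Gamma_i$ would form an FSIL. So there are exactly two components.
\item If $v\in\Gamma_i$ had two non-neighbours $a,b\in\Gamma_i$, then $\{v,a,b\}$ would be a STIL separating $\Gamma_j$. So each $\Gamma_i$ is a complete graph minus a matching.
\item Hence each non-central vertex $v$ has a unique non-neighbour $v'$ in its component, and the SILs are exactly the $\{v,v'|\Gamma_j\}$. Up to inversion in $\Out$, each such $v$ gives a single partial conjugation $\chi_{v,\Gamma_j}=\chi_{v,\{v'\}}^{-1}$.
\end{enumerate}
The paper then avoids checking any presentation. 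The assignment $v\mapsto\chi_{v,\Gamma_j}$ defines a surjection $\iota\colon W_{\Gamma_1}\times W_{\Gamma_2}\to\Out^0(W_\Gamma)$ with kernel $Z(W_{\Gamma_1})\times Z(W_{\Gamma_2})$. So $\Out^0(W_\Gamma)\cong W_{\Gamma_1\backslash Z(\Gamma_1)}\times W_{\Gamma_2\backslash Z(\Gamma_2)}$, which is a product of copies of $D_\infty$.

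Once you have this structure, your own suggested tool could also close the gap. Apply \cref{alphabet} to the identity automorphism: deleting letters then gives a retraction of $\Aut^0(W_\Gamma)$ onto $\langle\chi_v,\chi_{v'}\rangle\cong D_\infty$ for each pair $\{v,v'\}$. Since $\langle\mathcal{P}^0\rangle$ meets $\Inn(W_\Gamma)$ trivially, the product of these retractions shows that the natural map $D_\infty^m\to\Out^0(W_\Gamma)$ is injective. Without the structural step, however, the proposal does not establish that $\Out^0(W_\Gamma)$ is a right-angled Coxeter group.
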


\begin{proof}

We summarize the proof that Susse and Sale give of this proposition in \cite{sale2017outerautomorphismgroupsrightangled}. Susse and Sale first describe the disconnected graphs $\Gamma$ which contain no FSIL or STIL. They prove that if $\Gamma$ contains three (or more) connected components $\Gamma_1, \Gamma_2,$ and $\Gamma_3$, then we can take $x_i \in \Gamma_i$ and form an FSIL $\{x_1,x_2,x_3\}$. Then by \cref{theoremlarge}, $\Out(W_\Gamma)$ is large. So we may assume $\Gamma$ contains exactly two connected components $\Gamma_1$ and $\Gamma_2$.

For $\{i,j\} = \{1,2\}$, each triple of vertices in $\Gamma_i$ must have at least 2 edges between them to prevent a STIL acting on $\Gamma_j$. Thus for a vertex $v \in \Gamma_i$, $\Gamma_i \backslash St(v)$ contains at most one vertex. This means $v \in \Gamma_i$ satisfies either:
\begin{itemize}
    \item $\Gamma \backslash St(v) = \Gamma_j$, where $\Gamma_j$ is the component which does not contain $v$. Then $v$ determines only an inner conjugation of $\Gamma$, which is a trivial partial conjugation in $\Out^0(W_\Gamma)$.
    \item $v$ is a star cut point such that $\Gamma \backslash St(v)$ has exactly two components; then there is a unique partial conjugation with acting vertex $v$, \newline $\chi_{v,\Gamma_j} \in \Out^0(W_\Gamma)$. In particular, this is the case for any acting vertex in a SIL. This is the only partial conjugation with acting vertex $v$ because if $w = \Gamma_i \backslash St(v)$, then $\chi_{v,w} \circ \chi_{v,\Gamma_j}$ is an inner conjugation, thus trivial in $\Out^0(W_\Gamma)$. Then $\chi_{v,w} = \chi_{v,\Gamma_j}^{-1}$.
\end{itemize}

See \cref{picturedisconabelian1} and \cref{picturedisconabelian2} for examples of graphs that satisfy these criteria. With this well-defined identification of each vertex in $\Gamma$ with a partial conjugation in $\Out^0(W_\Gamma)$ (which may be trivial), and since partial conjugations generate $\Out^0(W_\Gamma)$, the authors prove the existence of a surjective homomorphism \[\iota: W_{\Gamma_1} \times W_{\Gamma_2} \rightarrow \Out^0(W_\Gamma).\] Using the first isomorphism theorem, they prove that \[\ker (\iota) = Z(W_{\Gamma_1}) \times Z(\Gamma_2)\] \[\Rightarrow W_{\Gamma_1}/Z(W_{\Gamma_1}) \times W_{\Gamma_2}/Z(W_{\Gamma_2}) \cong \Out^0(W_\Gamma).\] 

Then each $W_{\Gamma_i}/Z(W_{\Gamma_i})$ is isomorphic to the right-angled Coxeter group with defining graph $\Gamma_i \backslash Z(\Gamma_i)$, and $\Out^0(W_\Gamma)$ is isomorphic to the right-angled Coxeter group with defining graph $\Gamma_1 \backslash Z(\Gamma_1) \sqcup \Gamma_2\backslash Z(\Gamma_2)$. Each of $W_{\Gamma_1}$ and $W_{\Gamma_2}$ is virtually abelian, because every triple of vertices in each component induces at least two edges. Therefore $\Out^0(W_\Gamma)$ is virtually abelian. 
\end{proof}

\begin{figure}
\centering
\begin{tikzpicture}
    \draw (1.5,3.5) ellipse (1.5 and 2);
    \node at (1.5,6) {$\Gamma_1$};
    \draw (5.5,3.5) ellipse (1.5 and 2);
    \node at (5.5,6) {$\Gamma_2$};
    \filldraw (2,4) circle (0.05) node[above]{$v_1$};
    \filldraw (2,2.5) circle (0.05) node[above]{$v_2$};
    \filldraw (1,3) circle (0.1) node[above left]{$K_{n_1}$};
    \draw (2,4) -- (1,3) -- (2,2.5);
    \filldraw (5,4) circle (0.05) node[above] {$w_1$};
    \filldraw (5,2.5) circle (0.05) node[above] {$w_2$};
    \draw (5,4) -- (6,3) -- (5,2.5);
    \filldraw (6,3) circle (0.1) node[above right]{$K_{n_2}$};
\end{tikzpicture}
\caption[Example 1 of a disconnected graph $\Gamma$ with $\Out^0(W_\Gamma)$ virtually abelian but not virtually $\Z$]{Example 1 of a disconnected graph $\Gamma$ with $\Out^0(W_\Gamma)$ virtually abelian but not virtually $\Z$. Here the SILs are $\{v_1,v_2|\Gamma_2\}$ and $\{w_1,w_2|\Gamma_1\}$. Let $|V(\Gamma_1) - 2| = n_1$ and $|V(\Gamma_2) -2| = n_2$.}
\label{picturedisconabelian1}
\end{figure}
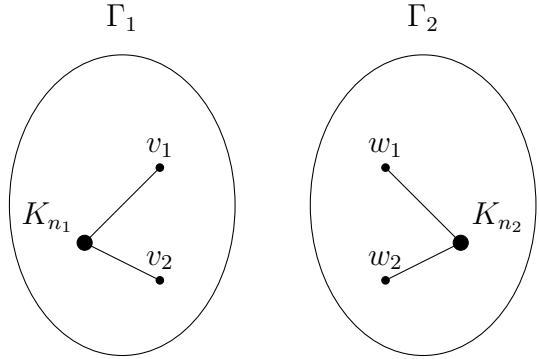

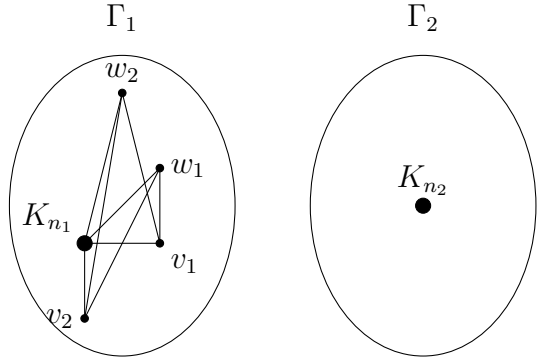
\begin{figure}
\centering
\begin{tikzpicture}
    \draw (1.5,3.5) ellipse (1.5 and 2);
    \node at (1.5,6) {$\Gamma_1$};
    \draw (5.5,3.5) ellipse (1.5 and 2);
    \node at (5.5,6) {$\Gamma_2$};
    \filldraw (2,3) circle (0.05) node[below right]{$v_1$};
    \filldraw (1,2) circle (0.05) node[left]{$v_2$};
    \filldraw (1,3) circle (0.1) node[above left]{$K_{n_1}$};
    \draw (1,2) -- (1,3) -- (2,3);
    \filldraw (2,4) circle (0.05) node[right] {$w_1$};
    \filldraw (1.5,5) circle (0.05) node[above] {$w_2$};
    \draw (2,4) -- (1,3) -- (1.5,5);
    \draw (1.5,5) -- (2,3) -- (2,4);
    \draw (1.5,5) -- (1,2) -- (2,4);
    \filldraw (5.5,3.5) circle (0.1) node[above]{$K_{n_2}$};
\end{tikzpicture}

\caption[Example 2 of a disconnected graph $\Gamma$ with $\Out^0(W_\Gamma)$ virtually abelian but not virtually $\Z$]{Example 2 of a disconnected graph $\Gamma$ with $\Out^0(W_\Gamma)$ virtually abelian but not virtually $\Z$. Here the SILs are $\{v_1,v_2|\Gamma_2\}$ and $\{w_1,w_2|\Gamma_2\}$. Let $|V(\Gamma_1) - 4| = n_1$ and $|V(\Gamma_2)| = n_2$. Note that even with multiple SILs in $\Gamma_1$, for each vertex $v \in \Gamma_1$, there is at most one vertex in $\Gamma_1 \backslash St(v)$.}
\label{picturedisconabelian2}
\end{figure}

\subsection{Applying these methods to our case}

The following proposition is the same result as \cref{theorem3.5}, only now for the disconnected case. 

\begin{proposition}\label{lemma6}
Let $W_\Gamma$ be a graph product of primary cyclic groups where $\Gamma$ is a disconnected graph. If $\Gamma$ contains two or more distinct Coxeter SILs, then $\Out^0(W_\Gamma)$ is not virtually $\Z$. 
\end{proposition}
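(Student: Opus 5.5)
We would follow the structure of Susse and Sale's argument for \cref{prop2.3}, reducing to the virtually abelian case and then identifying $\Out^0(W_\Gamma)$ explicitly as a right-angled Coxeter group.

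\emph{Reductions.} If $\Out(W_\Gamma)$ is not virtually abelian, then $\Out^0(W_\Gamma)$ is not virtually $\Z$, and we are done. So we assume $\Out(W_\Gamma)$ is virtually abelian. By \cref{theoremlarge}, $\Gamma$ then contains no STIL, no FSIL and no non-Coxeter SIL. Since every SIL is Coxeter, each acting vertex of a SIL has order $2$. Exactly as in the summary of \cref{prop2.3}, three or more components would produce an FSIL. Hence $\Gamma = \Gamma_1 \sqcup \Gamma_2$, and for each $v \in \Gamma_i$ the set $\Gamma_i \backslash St(v)$ contains at most one vertex.

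\emph{Locating the SILs.} Let $\{v,v'|C\}$ be a SIL. We claim $v$ and $v'$ lie in the same component, say $\Gamma_i$. If instead $v \in \Gamma_1$ and $v' \in \Gamma_2$, then $Lk(v) \cap Lk(v') = \emptyset$, so $\Gamma \backslash (Lk(v)\cap Lk(v')) = \Gamma$. Every component of $\Gamma$ contains $v$ or $v'$, which contradicts the definition of a SIL. With $v,v' \in \Gamma_i$, they are non-adjacent, so $\Gamma_i \backslash St(v) = \{v'\}$ and $\Gamma_i \backslash St(v') = \{v\}$. The components of $\Gamma \backslash St(v)$ are therefore $\{v'\}$ and $\Gamma_j$. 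The component $C$ is then forced to be $\Gamma_j$, the component not containing $v,v'$. Hence a SIL is the same thing as a non-adjacent pair of order-$2$ vertices in one component (any non-adjacent pair in $\Gamma_i$ gives the SIL $\{v,v'|\Gamma_j\}$).

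\emph{Case analysis.} Take two distinct Coxeter SILs $\{v_1,v_2|\Gamma_j\}$ and $\{w_1,w_2|\Gamma_k\}$.
\begin{enumerate}
\item If $j \ne k$, one SIL has its pair in $\Gamma_1$ and the other in $\Gamma_2$. Via the surjection $\iota$ from \cref{prop2.3}, $\Out^0(W_\Gamma) \cong W_{\Gamma_1}/Z(W_{\Gamma_1}) \times W_{\Gamma_2}/Z(W_{\Gamma_2})$. Each factor contains a copy of $D_\infty$ generated by the images of the non-adjacent pair, since non-adjacent vertices are not central. So $\Out^0(W_\Gamma)$ contains $D_\infty \times D_\infty$, which is not virtually $\Z$ by Fact 3.
\item If $j = k$, both pairs lie in $\Gamma_i$ and $\{v_1,v_2\} \ne \{w_1,w_2\}$. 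Each vertex of $\Gamma_i$ has at most one non-neighbour in $\Gamma_i$, so two distinct non-adjacent pairs must be disjoint, and all cross edges between them are present. The induced subgraph on $\{v_1,v_2,w_1,w_2\}$ is then a $4$-cycle, and its group is $D_\infty \times D_\infty$. By \cref{subgraph}, together with the identification of $\Out^0(W_\Gamma)$ with the right-angled Coxeter group on $\Gamma_1\backslash Z(\Gamma_1) \sqcup \Gamma_2\backslash Z(\Gamma_2)$, this $D_\infty \times D_\infty$ embeds in $\Out^0(W_\Gamma)$.
\end{enumerate}
Alternatively, in both cases one can reuse the argument of \cref{theorem3.5}: show $A \cap B = \{\id\}$ for $A = \langle\chi_{v_1,\Gamma_j},\chi_{v_2,\Gamma_j}\rangle$ and $B = \langle\chi_{w_1,\Gamma_k},\chi_{w_2,\Gamma_k}\rangle$ (using \cref{alphabet}), then conclude as there.

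\emph{Main obstacle.} The delicate point is that \cref{prop2.3} is stated for right-angled Coxeter groups, while here some vertex groups may have order greater than $2$. Vertices of higher order that are not acting vertices of SILs are harmless. However, justifying the isomorphism via $\iota$ in this generality needs care. If that step fails, we would fall back on the direct trivial-intersection argument of \cref{theorem3.5}, which only uses that the acting vertices have order $2$.
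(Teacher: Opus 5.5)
Your proposal is correct and follows essentially the paper's proof. You reduce via \cref{theoremlarge} to two components in which each vertex has at most one non-neighbour in its own component. You then split into the case where the two SILs act on different components and the case where they act on the same one. In each case you exhibit $D_\infty \times D_\infty$ inside $\Out^0(W_\Gamma) \cong W_{\Gamma_1}/Z(W_{\Gamma_1}) \times W_{\Gamma_2}/Z(W_{\Gamma_2})$ using the map $\iota$ from \cref{prop2.3}.

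Your explicit checks fill in steps the paper only asserts. You show that a SIL's acting pair lies in one component and acts on the other. You show that two distinct pairs in the same component must be disjoint, which the paper gets instead by producing a STIL from a shared vertex. You also state the reduction correctly as at most one non-neighbour per vertex, whereas the paper's proof says every triple in $\Gamma_i$ has "at least three edges", which would forbid any SIL inside $\Gamma_i$.
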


\begin{proof}

This proof closely follows that of \cref{prop2.3}. If $\Gamma$ does not satisfy the characterization given above, then $\Out^0(W_\Gamma)$ is not virtually abelian, hence not virtually $\Z$. If $\Gamma$ contains a non-Coxeter SIL, then by \cref{theoremlarge}, $\Out(W_\Gamma)$ is large. So we may assume that $\Gamma$ has exactly two connected components $\Gamma_1$ and $\Gamma_2$, that every triple of vertices in $\Gamma_i$ has at least three edges, and that any SIL in $\Gamma$ is a Coxeter SIL.

First, let us consider the case where $\Gamma$ has at least one Coxeter SIL in each connected component: say $\{v_1,v_2|\Gamma_2\}$ is a SIL with $v_1,v_2 \in \Gamma_1$, and $\{w_1,w_2|\Gamma_2\}$ is a SIL with $w_1,w_2 \in \Gamma_2$. $v_1$ and $v_2$ are not adjacent by definition, but they must be adjacent to each other vertex in $\Gamma_1$, and each other pair in $\Gamma_1$ must be adjacent. Similarly, $\Gamma_2 \backslash \{w_1\}$ and $\Gamma_2 \backslash \{w_2\}$ must both be complete subgraphs. The center of $\Gamma_1$ is $Z(\Gamma_1) = \Gamma_1 \backslash \{v_1,v_2\}$, and $Z(\Gamma_2) = \Gamma_2 \backslash \{w_1,w_2\}$. See \cref{picturedisconabelian1} for a diagram. 

Now, we want to apply the surjection $\iota$ defined in the proof of \cref{prop2.3}. Although Susse and Sale assume that $W_\Gamma$ is a right-angled Coxeter graph, all of the conditions required to define $\iota$ still hold: each vertex in $\Gamma \backslash \{v_1,v_2,w_1,w_2\}$ defines only a trivial partial conjugation in $\Out^0(W_\Gamma)$. Each of $v_1, v_2, w_1$, and $w_2$ determines a unique, non-trivial partial conjugation of order 2; for example, the partial conjugation determined by $v_1$ is $\chi_{v_1,\Gamma_2} = \chi_{v_1,v_2}^{-1}$. So even though elements in $\Gamma \backslash \{v_1,v_2,w_1,w_2\}$ may have order larger than 2, the surjection \[\iota: W_{\Gamma_1} \times W_{\Gamma_2} \rightarrow \Out^0(W_\Gamma)\] holds, as does the isomorphism \[W_{\Gamma_1}/Z(W_{\Gamma_1}) \times W_{\Gamma_2}/Z(W_{\Gamma_2}) \cong \Out^0(W_\Gamma).\] In the case that there are no other Coxeter SILs in $\Gamma$, we get \[\langle v_1, v_2 \rangle \times \langle w_1, w_2 \rangle \cong \Out^0(W_\Gamma)\] \[\Rightarrow D_\infty \times D_\infty \cong \Out^0(W_\Gamma),\] which is not virtually $\Z$. If there are additional Coxeter SILs in $\Gamma$, then $\langle v_1,v_2 \rangle \subseteq W_{\Gamma_1} / Z(W_{\Gamma_1})$ and $\langle w_1,w_2 \rangle \subseteq W_{\Gamma_2} / Z(W_{\Gamma_2})$, thus $D_\infty \times D_\infty$ is a subgroup of $\Out^0(W_\Gamma)$.

Now we consider the case where $\Gamma = \Gamma_1 \sqcup \Gamma_2$ contains at least two SILs in one of the $\Gamma_i$ and no SILs in the other component, as in \cref{picturedisconabelian2}. Suppose, without loss of generality, that $\{v_1,v_2|\Gamma_2\}$ and $\{w_1,w_2|\Gamma_2\}$ are SILs with acting vertices in $\Gamma_1$. If the set $\{v_1,v_2,w_1,w_2\}$ contains only three distinct vertices, say $v_1 = w_1$, then the triple $v_1,v_2$ and $w_2$ has one edge between $v_2$ and $w_2$, forming a STIL. Thus we assume that the set $\{v_1,v_2,w_1,w_2\}$ contains four distinct vertices.

We assume first that there are no other Coxeter SILs with acting vertices in $\Gamma_1$. Then $W_{\Gamma_1}/Z(W_{\Gamma_1}) = \langle v_1, v_2 \rangle \times \langle w_1, w_2 \rangle$, because both $v_1$ and $v_2$ commute with $w_1$ and $w_2$. No SILs with acting vertices in $\Gamma_2$ also implies that $Z(W_{\Gamma_2}) = W_{\Gamma_2}$. Applying $\iota$ again, \[\langle v_1, v_2 \rangle \times \langle w_1, w_2 \rangle \times 1 \cong \Out^0(W_\Gamma)\] \[\Rightarrow D_\infty \times D_\infty \cong \Out^0(W_\Gamma).\] As with the first case, if there are more than 2 Coxeter SILs with acting vertices in $\Gamma_1$, $D_\infty \times D_\infty$ is still a subgroup of $\Out^0(W_\Gamma)$, and the result holds.

\end{proof}

Here is a lemma that is useful for both the disconnected and connected cases, and proves that the conditions stated in our main result, \cref{theoremmain} in Chapter 6, agree with the conditions stated in \cref{theoremlarge} for $\Out(W_\Gamma)$ to be virtually abelian. 

\begin{lemma}\label{lemma4}
Let $W_\Gamma$ be a graph product of primary cyclic groups, where $\Gamma$ is an arbitrary graph. If $\Gamma$ contains exactly one SIL, then $\Gamma$ cannot contain a STIL. 
\end{lemma}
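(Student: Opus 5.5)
I will prove the contrapositive: if $\Gamma$ contains a STIL $\{v_1,v_2,v_3|C\}$, then $\Gamma$ contains at least two distinct SILs. The idea is to extract a SIL from each non-adjacent pair in the triple, all separating the same component $C$. Since the triple spans at most one edge, at least two of the three pairs are non-adjacent. Relabel so that $v_1v_2$ and $v_1v_3$ are non-edges; the pair $v_2v_3$ may or may not be an edge.

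First I show that $\{v_1,v_2|z\}$ is a SIL for any $z\in C$. Write $L=Lk(v_1)\cap Lk(v_2)\cap Lk(v_3)$ and $L_{12}=Lk(v_1)\cap Lk(v_2)$, so $L\subseteq L_{12}$. Hence $\Gamma\backslash L_{12}\subseteq \Gamma\backslash L$, and every path in $\Gamma\backslash L_{12}$ is also a path in $\Gamma\backslash L$. Let $C'$ be the component of $\Gamma\backslash L_{12}$ containing $z$ (note $z\notin L_{12}$, since $C\cap L=\emptyset$ and, if $z\in L_{12}\backslash L$, then $z$ is adjacent to $v_1$ and so lies in $v_1$'s component of $\Gamma\backslash L$; this contradicts $v_1\notin C$). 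Then $C'\subseteq C$. Since $v_1,v_2\notin C$, neither lies in $C'$. Condition (1), $d(v_1,v_2)\ge 2$, holds because the pair is non-adjacent. So $\{v_1,v_2|z\}$ is a SIL, and by the same argument $\{v_1,v_3|z\}$ is a SIL.

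These two SILs are distinct by definition, since $\{v_1,v_2\}\neq\{v_1,v_3\}$. Therefore $\Gamma$ has at least two SILs, which contradicts the hypothesis of exactly one SIL.

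The main obstacle is the containment $C'\subseteq C$, together with checking that $z$ itself is not removed, i.e.\ $z\notin L_{12}$. I expect this step to carry the whole argument and plan to write it out carefully via the path argument above. This observation is essentially the converse direction of \cref{lemma1.7}, though here it is proved directly rather than invoked.
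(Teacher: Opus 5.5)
Your proof is correct and follows essentially the same route as the paper's: prove the contrapositive by choosing $v_1$ non-adjacent to both $v_2$ and $v_3$, then show that $\{v_1,v_2|Z\}$ and $\{v_1,v_3|Z\}$ are both SILs, which are distinct because their vertex pairs differ. Your version of the key step is cleaner than the paper's path-by-contradiction argument. You observe that $\Gamma\backslash(Lk(v_1)\cap Lk(v_2))\subseteq\Gamma\backslash(Lk(v_1)\cap Lk(v_2)\cap Lk(v_3))$, so each component of the former lies inside a component of the latter. You also explicitly check that $z\notin Lk(v_1)\cap Lk(v_2)$, a point the paper passes over.
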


\begin{proof}
We prove the statement by contrapositive: suppose $\Gamma$ contains the STIL $\{v_1,v_2,v_3|Z\}$ as in \cref{picturelemma4}. Since the STIL has at most one edge between the vertices $v_1,v_2$ and $v_3$, we suppose, without loss of generality, that there is an edge between $v_2$ and $v_3$. We will leverage this to prove $\Gamma$ contains two SILs. If there are no edges in the subgraph induced by $v_1,v_2$, and $v_3$, then we will find three SILs. If $\Gamma$ is disconnected, it is possible that there is no edge connecting the components $L_1 \cap L_2 \cap L_3$ and $Z$, but the result still holds. 

\begin{figure}
\centering
\begin{tikzpicture}
\draw (3,3) ellipse (2 and 0.5);
\node[below] at (3,2.5) {$Lk(v_1) \cap Lk(v_2) \cap Lk(v_3)$};
\filldraw (2,3) circle (0.05);
\filldraw (3,3) circle (0.05);
\filldraw (4,3) circle (0.05);
\draw (7,3) circle (1) node[above right]{$Z$};
\draw (4,3) -- (7,3);
\filldraw (3,5) circle (0.05) node[above right]{$v_1$};
\draw (2,3) -- (3,5);
\draw (3,3) -- (3,5);
\draw (4,3) -- (3,5);
\filldraw (2,0) circle (0.05) node[right]{$v_2$};
\draw (2,3) -- (2,0);
\draw (3,3) -- (2,0);
\draw (4,3) -- (2,0);
\filldraw (0,2) circle (0.05) node[above left]{$v_3$};
\draw (2,3) -- (0,2);
\draw (3,3) -- (0,2);
\draw (4,3) -- (0,2);
\draw[dashed] (2,0) -- (0,2);
\draw (0.5,4.5) ellipse (1 and 0.5);
\node[above][scale=0.8] at (1,5) {$(L(v_1) \cap L(v_2)) \backslash L(v_3)$};
\draw (3,5) -- (1.5,4.5);
\draw (0,2) -- (0,4);
\end{tikzpicture}
    \caption{STIL for proof of \cref{lemma4}}
    \label{picturelemma4}
\end{figure}

Suppose there is no SIL $\{v_1, v_2|Z\}$. Then there must be the paths $v_1 \rightarrow Z$ and $v_2 \rightarrow Z$, both passing through $(L(v_1)\cap L(v_2)) \backslash L(v_3)$. But then $v_1, v_2$, and $Z$ would be in the same connected component of $\Gamma \backslash (L(v_1) \cap L(v_2) \cap L(v_3))$, contradicting the assumption of a STIL $\{v_1,v_2,v_3|Z\}$. Therefore $\{v_1,v_2|Z\}$ must form a SIL. Similarly, $\{v_1,v_3|Z\}$ must form a SIL because there can be no paths $v_1 \rightarrow Z, v_2 \rightarrow Z$ in $\Gamma \backslash (L(v_1) \cap L(v_2) \cap L(v_3))$. 

Therefore, the existence of a STIL in $\Gamma$ implies the existence of at least two SILs in $\Gamma$.
\end{proof}

\cref{lemma6} proves that a disconnected graph $\Gamma$ must have a unique Coxeter SIL in order for $\Out(W_\Gamma)$ to be virtually $\Z$. We conclude this chapter by showing that this is a sufficient condition for $\Out(W_\Gamma)$ to be virtually $\Z$, and the proof describes the family of graphs satisfying these conditions. 

\begin{proposition}\label{lemma5}

Let $W_\Gamma$ be a graph product of primary cyclic groups where $\Gamma$ is a disconnected graph. If $\Gamma$ has a unique Coxeter SIL $\{v_1,v_2| C\}$ and no non-Coxeter SIL, then $\Out^0(W_\Gamma) \cong D_\infty$.
\end{proposition}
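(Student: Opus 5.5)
We follow the strategy of \cref{prop2.3} and \cref{lemma6}: first pin down the shape of $\Gamma$, then apply the surjection $\iota: W_{\Gamma_1} \times W_{\Gamma_2} \rightarrow \Out^0(W_\Gamma)$ and compute its image.

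\textbf{Step 1: reduce to two components.} By \cref{lemma4}, a unique SIL forces $\Gamma$ to contain no STIL. An FSIL would produce three SILs, so $\Gamma$ contains no FSIL either. Together with the absence of non-Coxeter SILs, \cref{theoremlarge} shows that $\Out(W_\Gamma)$ is virtually abelian. As in the summary of \cref{prop2.3}, three components $\Gamma_1, \Gamma_2, \Gamma_3$ would give an FSIL by choosing one vertex from each. Hence $\Gamma = \Gamma_1 \sqcup \Gamma_2$, and every triple of vertices inside a single $\Gamma_i$ spans at least two edges.

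\textbf{Step 2: locate the SIL.} The SIL $\{v_1,v_2|C\}$ must have both acting vertices in one component, say $\Gamma_1$, with $C = \Gamma_2$. Indeed, if $v_1 \in \Gamma_1$ and $v_2 \in \Gamma_2$, then $Lk(v_1) \cap Lk(v_2) = \emptyset$, and every component of $\Gamma$ contains $v_1$ or $v_2$, so there would be no separated component. Within $\Gamma_1$, the triple condition forces every vertex other than $v_1,v_2$ to be adjacent to both of them. Uniqueness then forces every other pair of non-adjacent vertices in $\Gamma$ not to form a SIL.

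\textbf{Step 3: classify the partial conjugations.} Every vertex is either inner-trivial or contributes a single partial conjugation, as in the bullet list of \cref{prop2.3}. We argue that any non-adjacent pair $x,y$ in the same $\Gamma_i$ would form a SIL separating the other component. For $x,y \in \Gamma_i$, the set $Lk(x) \cap Lk(y)$ lies in $\Gamma_i$, so $\Gamma_j$ is a component of $\Gamma \backslash (Lk(x) \cap Lk(y))$ that avoids $x$ and $y$. It follows that $\{v_1,v_2\}$ is the only non-adjacent pair inside either component. Consequently $\Gamma_1 \backslash \{v_1\}$ and $\Gamma_1 \backslash \{v_2\}$ are complete, $\Gamma_2$ is complete, $Z(\Gamma_1) = \Gamma_1 \backslash \{v_1,v_2\}$, and $Z(\Gamma_2) = \Gamma_2$. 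The only star cut points are $v_1$ and $v_2$ (for instance, $\Gamma \backslash St(v_1) = \{v_2\} \sqcup \Gamma_2$), and each contributes exactly one generator to $\mathcal{P}^0$, namely $\chi_{v_1,\Gamma_2}$ and $\chi_{v_2,\Gamma_2}$ (or their inverses, depending on the numbering).

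\textbf{Step 4: compute the group.} Applying the isomorphism from \cref{prop2.3}, which \cref{lemma6} justified for non-Coxeter vertex groups, gives
\[\Out^0(W_\Gamma) \cong W_{\Gamma_1}/Z(W_{\Gamma_1}) \times W_{\Gamma_2}/Z(W_{\Gamma_2}) \cong \langle v_1, v_2 \rangle \times 1 \cong \Z/2\Z * \Z/2\Z \cong D_\infty.\]
Alternatively, this can be checked directly. $\Out^0(W_\Gamma)$ is generated by the two involutions $\chi_{v_1,\Gamma_2}$ and $\chi_{v_2,\Gamma_2}$, so it is a quotient of $D_\infty$. Their product acts on $\Gamma_2$ as conjugation by $v_1v_2$, which has infinite order in $W_\Gamma$. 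This product is not inner: an inner automorphism fixing $\Gamma_1$ pointwise up to the central kernel, while acting on $\Gamma_2$ by a power of $v_1v_2$, would have to be trivial in $\Out$ only for the zeroth power. Since every proper quotient of $D_\infty$ is finite, the group is exactly $D_\infty$.

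The main obstacle is Step 4: confirming that the kernel of $\iota$ is exactly the center even though the vertex orders in $Z(\Gamma_i)$ may exceed 2. Equivalently, in the direct argument, one must confirm that no nonzero power of $\chi_{v_1,\Gamma_2}\chi_{v_2,\Gamma_2}$ is inner. I would check this first via \cref{alphabet}, restricting to the subgraph $\{v_1,v_2\} \cup \Gamma_2$, where the element $(v_1v_2)^k$ is visibly non-central for $k \neq 0$.
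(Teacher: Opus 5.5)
Your proposal is correct and follows the paper's own route. You exclude STILs and FSILs via \cref{lemma4}, reduce to two components with the SIL $\{v_1,v_2|\Gamma_2\}$, and use uniqueness to make $\Gamma_2$ complete and $\{v_1,v_2\}$ the only non-adjacent pair in $\Gamma_1$; your remark that any non-adjacent pair in $\Gamma_i$ is automatically a SIL separating $\Gamma_j$ justifies this more cleanly than the paper does. You then read off $\langle v_1,v_2\rangle\times 1\cong D_\infty$ from $\iota$, exactly as the paper does.

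Your supplementary direct check, that $\Out^0(W_\Gamma)$ is an infinite quotient of $D_\infty$, is worthwhile: it sidesteps the extension of $\iota$ to non-Coxeter vertex groups, which the paper only sketches. The non-innerness you flag is also easy to settle. If $(\chi_{v_1,\Gamma_2}\chi_{v_2,\Gamma_2})^k$ were conjugation by $g$, then in $W_\Gamma=W_{\Gamma_1}*W_{\Gamma_2}$, fixing $W_{\Gamma_1}$ pointwise forces $g\in Z(W_{\Gamma_1})$. Agreement on $\Gamma_2$ forces $g\in (v_2v_1)^kW_{\Gamma_2}$, and these two conditions are incompatible for $k\neq 0$.
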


\begin{proof}

We again follow the structure of the proof of \cref{prop2.3}. By \cref{lemma4}, since $\Gamma$ has exactly one SIL, there can be no STIL or FSIL in $\Gamma$. As noted in the proof of \cref{prop2.3}, $\Gamma$ must have exactly 2 connected components $\Gamma_1$ and $\Gamma_2$. Since there are two components, the SIL must be acting on an entire connected component. Suppose, without loss of generality, that the Coxeter SIL is $\{v_1, v_2|\Gamma_2\}$, where $v_1$ and $v_2$ are in $\Gamma_1$. 

$v_1$ is not adjacent to $v_2$, but every other pair in $\Gamma_1$ must be adjacent, otherwise we get a second SIL acting on $\Gamma_2$. $\Gamma_2$ must be a complete subgraph, otherwise we get a second SIL acting on $\Gamma_1$. Therefore $Z(\Gamma_1) = \Gamma_1 \backslash \{v_1,v_2\}$ and $Z(\Gamma_2) = \Gamma_2$. See \cref{picturedisconcyclic} for a diagram.

In \cref{lemma6} we saw that the surjection $\iota$ defined in \cref{prop2.3} still holds when $\Gamma$ contains only Coxeter SILs. We apply the isomorphism determined by $\iota$ and get  \[W_{\Gamma_1}/Z(W_{\Gamma_1}) \times W_{\Gamma_2}/Z(W_{\Gamma_2}) \cong \Out^0(W_\Gamma)\] \[\Rightarrow \langle v_1, v_2 \rangle \times 1 \cong \Out^0(W_\Gamma)\] \[\Rightarrow D_\infty \cong \Out^0(W_\Gamma).\] In particular, $\Out^0(W_\Gamma)$ is virtually $\Z$.
\end{proof}

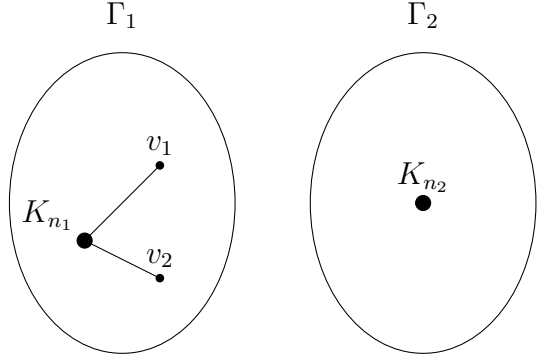
\begin{figure}
\centering
\begin{tikzpicture}
    \draw (1.5,3.5) ellipse (1.5 and 2);
    \node at (1.5,6) {$\Gamma_1$};
    \draw (5.5,3.5) ellipse (1.5 and 2);
    \node at (5.5,6) {$\Gamma_2$};
    \filldraw (2,4) circle (0.05) node[above]{$v_1$};
    \filldraw (2,2.5) circle (0.05) node[above]{$v_2$};
    \filldraw (1,3) circle (0.1) node[above left]{$K_{n_1}$};
    \draw (2,4) -- (1,3) -- (2,2.5);
    \filldraw (5.5,3.5) circle (0.1) node[above]{$K_{n_2}$};
\end{tikzpicture}
\caption[Disconnected graph $\Gamma$ with $\Out^0(W_\Gamma)$ virtually $\Z$]{A characterization of all disconnected graphs $\Gamma$ with $\Out^0(W_\Gamma)$ virtually $\Z$. Let $|V(\Gamma_1) - 2| = n_1$ and $|V(\Gamma_2)| = n_2$.}
\label{picturedisconcyclic}
\end{figure}

\section{Proving our main result}

This chapter covers a few more necessary lemmas and concludes with a proof of \cref{theoremmain}.

\newpage

\begin{lemma}[Lemma 1.4 \cite{sale2017outerautomorphismgroupsrightangled} ]\label{lemma1.4}

Suppose $W_\Gamma$ is a graph product of directly indecomposable cyclic groups and let $x$ and $y$ be distinct vertices in $\Gamma$. Two partial conjugations $\chi_{x,C}$ and $\chi_{y,D} $ in $\Out(W_\Gamma)$ do not commute if and only if there is a SIL $\{x,y|z\}$ and one of the following holds:
    \begin{enumerate}
        \item $z \in C = D$;
        \item $x \in D$ and $z \in C$;
        \item $y \in C$ and $z \in D$;
        \item $x \in D$ and $y \in C$.
    \end{enumerate}
\end{lemma}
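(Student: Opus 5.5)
We prove both directions by comparing the two automorphisms on generators of $W_\Gamma$, working in $\Aut(W_\Gamma)$ modulo $\Inn(W_\Gamma)$. Write $C$ for the component of $\Gamma\backslash St(x)$ and $D$ for the component of $\Gamma\backslash St(y)$. For a vertex $u$, the element $\chi_{x,C}\chi_{y,D}(u)$ is $u$ conjugated by some word in $\{x,y\}$, and the same holds for $\chi_{y,D}\chi_{x,C}(u)$. The plan is to compute the commutator $[\chi_{x,C},\chi_{y,D}]$ on each vertex, and then decide whether the resulting automorphism is inner.

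For the ``if'' direction, assume there is a SIL $\{x,y|z\}$ and one of conditions (1)--(4) holds. In each case we exhibit a vertex on which the commutator acts as conjugation by a nontrivial element of $\langle x,y\rangle\cong \Z/\textbf{m}(x)\Z * \Z/\textbf{m}(y)\Z$. Here $x$ and $y$ do not commute, since a SIL requires $d(x,y)\ge 2$.
\begin{itemize}
\item In case (1) we take $u=z$, and the commutator sends $z$ to $[x,y]\,z\,[x,y]^{-1}$.
\item In case (2) we take $u=z$, and also check the effect on $x$, which gets conjugated by $y$.
\item Cases (3) and (4) are handled the same way.
\end{itemize}
We then show the commutator is not inner. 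By \cref{alphabet}, any conjugating element of an inner automorphism agreeing with the commutator can be taken in $W_{\{x,y\}}$. Because $z$ lies in a component of $\Gamma\backslash(Lk(x)\cap Lk(y))$ avoiding $x$ and $y$, the vertices $x$, $y$ (or the vertices of $St(x)$ outside $C$) and $z$ are conjugated by different elements. We make this rigorous by using normal forms in the free product $W_{\{x,y\}}$ together with the fact that centralizers of vertices in graph products are generated by stars. The conclusion is that no single $g$ conjugates all vertices correctly.

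For the ``only if'' direction we argue by contrapositive. Suppose there is no SIL $\{x,y|z\}$ satisfying any of (1)--(4). We case-split on the relative positions: $x\in D$ or $x\notin D$, $y\in C$ or $y\notin C$, and $C=D$, $C\cap D=\emptyset$, or $C\neq D$ overlapping.
\begin{itemize}
\item If $C\cap D=\emptyset$, $x\notin D$ and $y\notin C$, the two automorphisms have disjoint supports and each fixes the other's acting vertex, so they commute.
\item If $x$ and $y$ are adjacent, then $x$ and $y$ commute. A direct computation then shows the commutator is trivial, or at most inner.
\item In the remaining cases, the absence of a SIL means $x$ or $y$ lies in the relevant component of $\Gamma\backslash(Lk(x)\cap Lk(y))$. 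We use this to show containments: for example $C\subseteq D$ with $x\in D$, or $D\subseteq C$. In these situations the commutator is an inner automorphism, or a partial conjugation of the whole complement, hence trivial in $\Out$.
\end{itemize}
The main tool here is a path argument like the one in \cref{lemma2.2}: a path from $z$ to $x$ avoiding $Lk(x)\cap Lk(y)$ forces component memberships.

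The main obstacle is the non-innerness step in the ``if'' direction, together with exhausting the overlapping cases in the converse. For the non-innerness we would first try projecting to $W_{\{x,y,z\}}$ and restricting via \cref{subgraph}. In that free-product-like subgroup, comparing the conjugators of $x$ and $z$ exhibits the obstruction.
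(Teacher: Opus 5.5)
The paper does not prove this lemma; it is quoted from Sale--Susse, so your argument has to stand on its own.

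Your ``if'' direction is sound in outline.
\begin{enumerate}
\item In cases (1)--(3) the commutator fixes $x$ and $y$ and conjugates the SIL component $Z\ni z$ by a nontrivial element of $\langle x\rangle*\langle y\rangle$. In case (2) the commutator fixes $x$; it is $\chi_{y,D}$, not the commutator, that conjugates $x$ by $y$.
\item In case (4) the commutator fixes $Z$ pointwise and sends $x$ to $[x,y]x[x,y]^{-1}$, so (4) is not literally handled ``the same way'' as (3).
\item Your use of \cref{alphabet} is legitimate. Write an inner automorphism $\iota_h$ as a product of full conjugations by the letters of $h$, and delete those not based at $x$ or $y$. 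This shows the commutator would have to be $\iota_g$ with $g\in W_{\{x,y\}}$. Since the centralizer of a vertex $v$ is $W_{St(v)}$, the retraction $W_\Gamma\to W_{\{x,y\}}$ then forces $g=1$, a contradiction.
\end{enumerate}

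The gap is in the converse. Its hypothesis is not ``there is no SIL $\{x,y|\cdot\}$''. SILs may exist, and (1)--(4) fail only because of where $C$ and $D$ sit. So the case analysis must be organised around the SIL components $Z_i$ of $\Gamma\setminus(Lk(x)\cap Lk(y))$.

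The fact that drives the easy cases is this: for non-adjacent $x,y$ with $y\notin C$, one has $C\cap St(y)=\emptyset$. So $C$ lies in one component of $\Gamma\setminus St(y)$, and that component either contains $x$ or equals $C$; in the latter case $C$ is some $Z_i$. This gives your containments $C\subseteq D$ (if $y\notin C$, $x\in D$), $D\subseteq C$ (if $y\in C$, $x\notin D$), or disjointness. In all of these the two maps commute already in $\Aut(W_\Gamma)$.

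It says nothing about the case $y\in C$, $x\in D$ with no SIL between $x$ and $y$. There $y\in C\setminus D$ and $x\in D\setminus C$, so no containment holds. Moreover, the two partial conjugations never commute in $\Aut(W_\Gamma)$, because the commutator always sends $x$ to $[x,y]x[x,y]^{-1}$. This is exactly the configuration where commuting in $\Out$ is weaker than commuting in $\Aut$. It occurs non-degenerately, e.g.\ for the $4$-cycle $x,a,y,b$ together with a path $x,p,q,y$ and a pendant vertex at each of $p$ and $q$. Your plan gives no reason why the commutator is inner here; saying it ``is an inner automorphism'' is the very claim to be proved.

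Here is the missing step. When $y\in C$ and $x\in D$, the fact above shows that every vertex outside $Lk(x)\cap Lk(y)$ and outside $\bigcup_i Z_i$ lies in $C\cup D$. A direct computation then shows the commutator conjugates each such vertex by $[x,y]$ and fixes each $Z_i$ pointwise. So it equals $\iota_{[x,y]}$ when there are no $Z_i$, and is non-inner otherwise; this also re-proves your case (4).

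Alternatively, work in $\Out$: replace $\chi_{x,C}$ by $\prod_{C'\ne C}\chi_{x,C'}^{-1}$ and $\chi_{y,D}$ by $\prod_{D'\ne D}\chi_{y,D'}^{-1}$. Then check that, absent a SIL, each such $C'$ and $D'$ are disjoint with $y\notin C'$ and $x\notin D'$.

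So the real difficulty is this case of the converse, not the non-innerness step you flag as the main obstacle.
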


The next lemma shows that the condition of $\Gamma$ having exactly one SIL is very strong. A similar result was shown in the proof of \cref{prop2.3} for the disconnected case. Again, this result implies that the identification of a SIL with a pair of non-trivial partial conjugations in $\Out^0(W_\Gamma)$ is unique when $\Gamma$ has exactly one Coxeter SIL, a fact that we will prove explicitly for the general case in \cref{theoremmain}.

\begin{lemma}\label{lemma7}

Let $W_\Gamma$ be a graph product of primary cyclic groups, where $\Gamma$ is a connected graph. If $\Gamma$ contains exactly one SIL $\{v_1,v_2|Z\}$, then each of $\Gamma \backslash St(v_1)$ or $\Gamma \backslash St(v_2)$ has exactly two connected components.
\end{lemma}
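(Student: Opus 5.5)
We argue by contradiction, treating $v_1$; the argument for $v_2$ is symmetric. Since $\{v_1,v_2|Z\}$ is a SIL, $v_1$ is a star cut point. By \cref{lemma2.2}, the component of $\Gamma\backslash(Lk(v_1)\cap Lk(v_2))$ containing $Z$ is also a connected component $C$ of $\Gamma\backslash St(v_1)$ (and of $\Gamma\backslash St(v_2)$). Because $d(v_1,v_2)\ge 2$, the vertex $v_2$ lies in $\Gamma\backslash St(v_1)$, in some component $C_2\neq C$. So $\Gamma\backslash St(v_1)$ has at least the two components $C$ and $C_2$. Suppose it has a third component $C_3$, disjoint from both. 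The aim is to produce a second SIL, contradicting uniqueness.

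\textbf{Step 1.} Pick $z'\in C_3$ and show that $\{v_1,v_2|z'\}$ is a SIL distinct from $\{v_1,v_2|Z\}$. Since $v_2\notin St(v_1)$, $v_1\notin C_3$ and $v_2\notin C_3$. Consider the component $E$ of $z'$ in $\Gamma\backslash(Lk(v_1)\cap Lk(v_2))$. If $E$ contains neither $v_1$ nor $v_2$, we have a SIL $\{v_1,v_2|z'\}$. It is distinct from the given one because $C$ and $C_3$ are different components of $\Gamma\backslash St(v_1)$, and every path from $Z$ to $z'$ must meet $St(v_1)$. More precisely, I would check that $E\subseteq C_3$ by running the argument of \cref{lemma2.2}. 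This contradicts uniqueness.

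\textbf{Step 2.} Otherwise some path from $z'$ to $v_1$ or $v_2$ avoids $Lk(v_1)\cap Lk(v_2)$.
\begin{itemize}
\item A path to $v_1$ must leave $C_3$ through $Lk(v_1)$. So $C_3$ is adjacent to a vertex $a\in Lk(v_1)\backslash Lk(v_2)$.
\item A path to $v_2$ stays in $\Gamma\backslash St(v_1)$ until it hits $Lk(v_1)$, since $v_2\notin C_3$. So again $C_3$ touches $Lk(v_1)\backslash Lk(v_2)$, via a vertex $b$.
\end{itemize}
In the second case we can continue from $b$ to $v_2$ only through vertices outside $Lk(v_2)\cap Lk(v_1)$. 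The point to exploit is that $C_3$ is disjoint from $C_2\ni v_2$, while $Lk(v_1)\backslash Lk(v_2)$ need not separate them.

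In this situation I would look for a different second SIL, formed by $v_1$ and a vertex $u\in C_3$ (or $u\in C_2$), separating the other component. The plan is:
\begin{itemize}
\item Take $u\in C_3$ and $y\in C$.
\item Since $C$ and $C_3$ are distinct components of $\Gamma\backslash St(v_1)$, any path from $y$ to $u$ passes through $Lk(v_1)$.
\item If $\{v_1,u|y\}$ fails to be a SIL, then $C$ meets $Lk(v_1)\backslash Lk(u)$.
\end{itemize}
Combining this with the SIL $\{v_1,v_2|Z\}$, whose component $C$ has boundary inside $Lk(v_1)\cap Lk(v_2)$, I expect to derive a contradiction. The reason is that the neighbours of $C$ lie in $Lk(v_1)\cap Lk(v_2)$, so the only escape from $C$ is through $Lk(v_1)\cap Lk(v_2)$. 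Hence $\{v_1,u|Z\}$ would be a SIL unless $Lk(v_1)\cap Lk(v_2)\not\subseteq Lk(u)$ at the vertices adjacent to $C$. The cleanest choice is to take $u=v_2$ when appropriate, or else a vertex $u$ of $C_3$ adjacent to the boundary of $C$.

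\textbf{Main obstacle.} The hard step is making this case analysis airtight: exhibiting, in every configuration of $C_3$, a second SIL genuinely distinct from $\{v_1,v_2|Z\}$. This is where connectedness of $\Gamma$ must enter, since in the disconnected picture \cref{picturedisconcyclic} extra components behave differently. I would try first to show directly that every component of $\Gamma\backslash St(v_1)$ other than $C$ is forced to contain $v_2$. This would use \cref{lemma1.4}: a third component $C_3$ yields partial conjugations $\chi_{v_1,C_3}$ and $\chi_{v_2,\cdot}$ that fail to commute in a new way. Translating that non-commutation back into a new SIL is a possible shortcut, since non-commuting pairs correspond to SILs.
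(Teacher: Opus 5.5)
\textbf{Verdict: the proposal has a genuine gap at Step~2, and it cannot be filled because the lemma as stated is false.}

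Your Step~1 and the first half of Step~2 are correct. Suppose a third component $C_3$ of $\Gamma\backslash St(v_1)$ has all its neighbours in $Lk(v_1)\cap Lk(v_2)$. Then $C_3$ is a component of $\Gamma\backslash(Lk(v_1)\cap Lk(v_2))$, which gives a second SIL $\{v_1,v_2|C_3\}$. So $C_3$ must be adjacent to some vertex of $Lk(v_1)\backslash Lk(v_2)$.

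The gap is the remaining case, the one you flag as the main obstacle. As you essentially observe, for $u\notin St(v_1)\cup Z$:
\begin{itemize}
\item $\{v_1,u|Z\}$ is a SIL exactly when $u$ is adjacent to every neighbour of $Z$;
\item $\{v_1,u|C_3\}$ is a SIL exactly when $u$ is adjacent to every neighbour of $C_3$.
\end{itemize}
Nothing forces either condition.

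\textbf{Counterexample.} Take vertices $v_1,v_2,m,z,a_1,a_2,c,w$ with edges $v_1m$, $v_2m$, $mz$, $v_1a_1$, $v_1a_2$, $a_1c$, $a_2c$, $a_1w$, $wv_2$.
\begin{itemize}
\item The graph is connected and its only cut vertex is $m$. Since $\Gamma\backslash\{m\}$ has components $\{z\}$ and the rest, $\{v_1,v_2|z\}$ is a SIL.
\item It is the only SIL. The non-adjacent pairs whose link intersection disconnects $\Gamma$ are $\{v_1,v_2\}$, $\{v_1,z\}$, $\{v_2,z\}$ (intersection $\{m\}$), $\{v_1,c\}$ (intersection $\{a_1,a_2\}$, which only cuts off $c$) and $\{a_1,a_2\}$ (intersection $\{v_1,c\}$, which only cuts off $a_2$). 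Apart from $\{v_1,v_2\}$, every resulting component contains one of the two vertices.
\item Yet $\Gamma\backslash St(v_1)$ has three components: $\{z\}$, $\{c\}$ and $\{v_2,w\}$.
\end{itemize}
Adding a vertex $w'$ adjacent to $v_2$, and a vertex $d$ adjacent to $w$ and $w'$, gives a symmetric example in which $\Gamma\backslash St(v_2)$ also has three components. Connectedness does not help.

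\textbf{The Lemma 1.4 shortcut.} This fails for the same reason. Take $x=v_1$, $C=C_3$, $y=v_2$ in \cref{lemma1.4}. Every SIL on $\{v_1,v_2\}$ has its separated vertex in $Z$, so each of conditions (1)--(4) would require either $z\in C_3$ or $v_2\in C_3$. Hence $\chi_{v_1,C_3}$ commutes with every partial conjugation by $v_2$, and no new SIL can be extracted this way.

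\textbf{The paper's proof.} It does not close this case either. It asserts that $\{a,b|D\}$ is a SIL, where $a\in Lk(v_1)\cap Lk(v_2)$ and $b$ is the neighbour of $v_1$ on a path $\gamma$ from $v_1$ to $v_2$ avoiding $Lk(v_1)\cap Lk(v_2)$. In the example, $a=m$, $\gamma=v_1a_1wv_2$ and $b=a_1$. But $Lk(m)\cap Lk(a_1)=\{v_1\}$ does not disconnect $\Gamma$, so $\{a,b|D\}$ is not a SIL.

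\textbf{What survives.} Your Step~1 together with \cref{lemma1.4} proves a weaker statement. Every extra component of $\Gamma\backslash St(v_1)$ is attached to $Lk(v_1)\backslash Lk(v_2)$, and its partial conjugation commutes with all partial conjugations by $v_2$. The lemma has to be weakened to something of this kind rather than proved as stated.
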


\begin{proof}

We prove the statement by contrapositive: suppose $\Gamma \backslash St(v_1)$ has 3 connected components, $Z, C, D$, labeled so that $v_2 \in C$, as in \cref{picturelemma7}.

\begin{figure}
    \centering
    \begin{tikzpicture}[scale=0.6]
    \filldraw (7,13) circle (0.05) node[above]{$v_1$};
    \filldraw (7,3) circle (0.05) node[below]{$v_2$};
    \draw (7,13) -- (6,8) -- (7,3);
    \draw (7,13) -- (7,8) -- (7,3);
    \draw (7,13) -- (8,8) -- (7,3);
    \filldraw (6,8) circle (0.05) node[above left]{$a$};
    \filldraw (7,8) circle (0.05);
    \filldraw (8,8) circle (0.05);
    \draw (7,8) ellipse (3 and 0.5);
    \node[below right] at (7.5,7.5) {$Lk(v_1) \cap Lk(v_2)$};
    \draw (13,10) circle (2) node[above]{$D$};
    \draw (7,13) -- (12,11);
    \draw (7,4) ellipse (3 and 2);
    \node[right] at (7,3.5) {$C$};
    \draw (6,8) -- (3,8);
    \draw (2,8) ellipse (1 and 3) node{$Z$};
    \filldraw (3,8) circle (0.05) node[left]{$z$};
    \draw (7,13) arc (90:270:3.5 and 5);
    \filldraw (5,12.10326) circle (0.05) node[above]{$b$};
    \end{tikzpicture}
    \caption{SIL for \cref{lemma7}, where $\Gamma \backslash St(v_1)$ has 3 connected components.}
    \label{picturelemma7}
\end{figure}

Suppose no path $\gamma$ between $v_1$ and $v_2$ exists which does not pass through $Lk(v_1) \cap Lk(v_2)$. Then consider $z \in Z$ such that $Lk(z) \cap Lk(v_1) \cap Lk(v_2) \ne \emptyset$. $v_1$ and $v_2$ are in separate components of $\Gamma \backslash (Lk(v_1) \cap Lk(v_2))$, so $\{v_1,v_2,z\}$ forms an FSIL, which of course implies that $\Gamma$ has more than one SIL. Therefore such a path $\gamma$ must exist. $\gamma$ has length $\ge 2$ as a condition of the SIL; it cannot pass through $Z$ or $C$, because then these components would not be disconnected from one another. The only path between any two of $C,D$, and $Z$ must pass through either $Lk(v_1) \cap Lk(v_2)$ or $v_1$. Let $a$ be a vertex in $Lk(v_1) \cap Lk(v_2)$ and let $b$ be the point in $\gamma$ adjacent to $v_1$. Then $\{a,b |D\}$ forms a SIL, so $\Gamma$ has two distinct SILs. 

Similarly, if $\Gamma \backslash St(v_2)$ has at least three connected components, then $\Gamma$ has more than one SIL. 
\end{proof}

The proof of our main theorem, \cref{theoremmain}, has a similar structure to the proof of Proposition 2.2 in \cite{sale2017outerautomorphismgroupsrightangled}.

\begin{proposition}[Proposition 2.2 \cite{sale2017outerautomorphismgroupsrightangled}] \label{prop2.2}
Suppose that $W_\Gamma$ is a right-angled Coxeter group defined by a connected graph $\Gamma$ containing no STIL or FSIL. Then the commutator subgroup $(\Out^0(W_\Gamma))'$ is  abelian.
\end{proposition}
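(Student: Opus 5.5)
The plan is to show that every commutator of generators from $\mathcal{P}^0$ acts as a partial conjugation by an element lying in a subgroup of $W_\Gamma$ that is itself abelian, and then to check that these commutators commute with one another. Since $\Gamma$ is connected and has no STIL or FSIL, \cref{theoremlarge} tells us $\Out^0(W_\Gamma)$ is virtually abelian. We will use \cref{lemma1.4} to organize the non-commuting pairs of generators.

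First, we classify the non-commuting pairs. By \cref{lemma1.4}, two generators $\chi_{x,C},\chi_{y,D}\in\mathcal{P}^0$ fail to commute only if $\{x,y|z\}$ is a SIL and one of conditions (1)--(4) holds. In a connected graph, each acting vertex's star meets every component of $\Gamma\backslash St(\cdot)$. Because there is no FSIL, conditions (2)--(4) reduce, after renumbering vertices as permitted by Theorem 1.7 of \cite{Gutierrez_2012}, to the case $C=D=Z$, where $Z$ is the component separated by the SIL. Thus the only non-commuting pairs are $\chi_{x,Z},\chi_{y,Z}$ with $\{x,y|Z\}$ a SIL.

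Next, we compute the commutator of such a pair. $[\chi_{x,Z},\chi_{y,Z}]$ conjugates each vertex of $Z$ by $(xy)^2$, which has infinite order since $x,y$ are non-adjacent involutions, and fixes every other vertex. Call this automorphism $\tau_{x,y,Z}$. The commutator subgroup is the normal closure of these elements. We then show it is generated by conjugates $g\tau g^{-1}$ with $g\in\mathcal{P}^0$, and that each such conjugate is again a ``partial conjugation of $Z$ by an element of $\langle x,y\rangle$''. This works because each generator either commutes with $\tau$ (by \cref{lemma1.4}) or is $\chi_{x,Z}$ or $\chi_{y,Z}$, which normalize $\langle x,y\rangle$ acting on $Z$.

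Finally, we show that any two such elements commute. If they come from the same SIL data $(x,y,Z)$, both are partial conjugations of $Z$ by elements of $\langle (xy)^2\rangle$ up to conjugation, hence they lie in an infinite cyclic group and commute. If they come from distinct SILs $(x,y,Z)$ and $(x',y',Z')$, then \cref{lemma1.7} and the absence of a STIL force $Z\cap Z'=\emptyset$ whenever the acting sets overlap. When the acting sets are disjoint, we check via \cref{alphabet} that the conjugating elements fix each other's supports.

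The main obstacle will be this last step: controlling how $\{x,y\}$ and $\{x',y'\}$ can lie inside each other's components. For instance, $x'$ might lie in $Z$, in which case conjugating $Z$ by $(xy)^2$ moves $x'$. I would first try to show that in this situation $Z'\subseteq Z$ or $Z'\cap Z=\emptyset$, using the no-FSIL hypothesis, so that the two automorphisms act by conjugation on nested or disjoint pieces and commute.
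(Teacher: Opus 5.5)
Your outline postpones the one place where the hypotheses are actually used, and the claims you make in its place are not correct as stated. In step three you say every generator other than $\chi_{x,Z},\chi_{y,Z}$ commutes with $\tau$ ``by \cref{lemma1.4}''. This already fails for $\chi_{x,E}$, where $E$ is the component of $\Gamma\backslash St(x)$ containing $y$. That generator does not commute with $\chi_{y,Z}$ (condition (3)). Since $\chi_{x,E}\equiv\prod_{F\neq E}\chi_{x,F}^{-1}$ modulo $\Inn(W_\Gamma)$, it conjugates $\tau$ to $\tau^{-1}$.

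For an acting vertex $b\notin\{x,y\}$, \cref{lemma1.4} only says that $\chi_{b,C}$ can fail to commute with $\chi_{x,Z}$ when there is a SIL $\{x,b|z'\}$ with $z'\in Z$ or with $b\in Z$. You must exclude both cases, and this is the lemma you are missing.
\begin{enumerate}
\item The case $z'\in Z$ gives a STIL by \cref{lemma1.7}.
\item If $b\in Z$, then $Lk(x)\cap Lk(b)\subseteq Lk(x)\cap Lk(y)$. So $\{x,y,b\}$ spans no edge and $Lk(x)\cap Lk(y)\cap Lk(b)=Lk(x)\cap Lk(b)$. Having no STIL forces $y$ into the component $Z'$ of the SIL $\{x,b|Z'\}$. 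This forces $Lk(x)\cap Lk(b)=Lk(x)\cap Lk(y)$, since otherwise a vertex of $Lk(x)\cap Lk(y)$ outside $Lk(b)$ would join $x$ to $y$. Then $\{x,y,b\}$ is an FSIL.
\end{enumerate}
With this lemma, every generator conjugates $\tau$ to $\tau^{\pm1}$. The $\tau$'s of distinct SILs commute, by \cref{lemma1.4} together with the same lemma. Hence $(\Out^0(W_\Gamma))'$ is generated by pairwise commuting $\tau$'s. The paper only cites \cref{prop2.2}, but this is the route it points to (Lemma 2.5 of \cite{sale2017outerautomorphismgroupsrightangled}) and imitates for a single SIL in the proof of \cref{theoremmain}.

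Your plan for step four would not work, because the commutation holds only modulo $\Inn(W_\Gamma)$. Take the paper's third example (\cref{pictureconcreteex3}) with $\tau=[\chi_{v_1,C},\chi_{v_2,C}]$ and $\tau'=[\chi_{e,D},\chi_{f,D}]$. Their supports $C$ and $D$ are disjoint, but $e,f\in C$ and $v_1,v_2\in D$, so neither conjugating element is fixed by the other automorphism. Evaluating at $v_1$ inside the free product $\langle v_1,v_2,e,f\rangle$ (\cref{subgraph}) gives reduced words of lengths $25$ and $9$ for $\tau\tau'(v_1)$ and $\tau'\tau(v_1)$. The two commute only in $\Out(W_\Gamma)$, as \cref{lemma1.4} predicts. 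So ``disjoint or nested supports'' checked via \cref{alphabet} is the wrong tool.

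Smaller points:
\begin{itemize}
\item The reduction of cases (2)--(4) in step one has nothing to do with FSILs. It comes from $\prod_E\chi_{y,E}\in\Inn(W_\Gamma)$.
\item The partial conjugations of $Z$ by elements of $\langle x,y\rangle$ form a copy of $D_\infty$, which is not abelian. What you need is that conjugates of $\tau$ stay in $\langle\tau\rangle$.
\item Invoking \cref{theoremlarge} is circular, since its virtually abelian half is proved from \cref{prop2.2}. You never use it, so simply drop it.
\end{itemize}
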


In particular, we will see that the commutator subgroup is virtually cyclic when $\Gamma$ satisfies our conditions. 

\begin{definition}
    Let $G$ be an arbitrary group. For elements $h$ and $g$ in $G$, we define their \textbf{commutator} as $[g,h] = ghg^{-1}h^{-1}$. The \textbf{commutator subgroup} (also called the \textbf{derived subgroup}) of $G$, denoted $G'$, is the subgroup generated by all commutators of elements in $G$. Note that $G'$ is normal in $G$.
\end{definition}

Finally, we have a necessary and sufficient condition for $\Out(W_\Gamma)$ to be virtually $\Z$.

\begin{theorem}\label{theoremmain}

Let $W_\Gamma$ be graph product of primary cyclic groups. $\Out(W_\Gamma)$ is virtually $\mathbb{Z}$ if and only if $\Gamma$ contains a unique Coxeter SIL and no non-Coxeter SIL. 

\end{theorem}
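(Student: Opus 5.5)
We prove the two directions separately, reducing throughout to $\Out^0(W_\Gamma)$: by \cref{lemmaoutsplit} and \cref{theoremsplitting} it has finite index in $\Out(W_\Gamma)$, so $\Out(W_\Gamma)$ is virtually $\Z$ if and only if $\Out^0(W_\Gamma)$ is.

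\emph{Necessity.} Suppose $\Out(W_\Gamma)$ is virtually $\Z$. Then it is infinite, so by \cref{theoremfinite} $\Gamma$ contains at least one SIL. It is also virtually abelian, so by \cref{theoremlarge} $\Gamma$ contains no non-Coxeter SIL, no STIL and no FSIL. Hence every SIL is Coxeter. If there were two distinct Coxeter SILs, \cref{theorem3.5} (when $\Gamma$ is connected) or \cref{lemma6} (when $\Gamma$ is disconnected) would show that $\Out^0(W_\Gamma)$ is not virtually $\Z$. So $\Gamma$ has exactly one SIL, and it is Coxeter.

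\emph{Sufficiency.} Suppose $\Gamma$ has a unique SIL $\{v_1,v_2|Z\}$ and it is Coxeter. If $\Gamma$ is disconnected, \cref{lemma5} gives $\Out^0(W_\Gamma)\cong D_\infty$, which is virtually $\Z$. So assume $\Gamma$ is connected. By \cref{lemma4} there is no STIL, and an FSIL would produce three SILs, so there is no FSIL either.

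The plan in the connected case is to show that, modulo a finite abelian part, $\Out^0(W_\Gamma)$ is generated by the two involutions $\chi_{v_1,Z}$ and $\chi_{v_2,Z}$. The steps are as follows.

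\begin{enumerate}
\item By \cref{lemma7}, $\Gamma\backslash St(v_i)$ has exactly two components for $i=1,2$. One is $Z$. The other, $C_i$, contains $v_{3-i}$, since $v_{3-i}\notin St(v_i)$ and $v_{3-i}\notin Z$. Hence $\chi_{v_i,C_i}=\chi_{v_i,Z}^{-1}$ in $\Out(W_\Gamma)$, and each of $v_1,v_2$ contributes a single generator to $\mathcal{P}^0$. Since $\textbf{m}(v_i)=2$, each of these generators has order $2$.
\item Let $\chi_{x,C}$ be any other generator in $\mathcal{P}^0$, with $x\notin\{v_1,v_2\}$. Every pair $\{x,y\}$ with $y\ne x$ fails to be a SIL, because the only SIL has acting pair $\{v_1,v_2\}$. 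By \cref{lemma1.4}, $\chi_{x,C}$ therefore commutes with every partial conjugation. The same lemma shows that $\chi_{v_1,Z}$ and $\chi_{v_2,Z}$ do not commute, using condition (1) with $z\in Z$.
\item It follows that $\Out^0(W_\Gamma)=A\cdot H$, where $A=\langle\chi_{v_1,Z},\chi_{v_2,Z}\rangle$ and $H$ is generated by the remaining elements of $\mathcal{P}^0$. Here $H$ is central and abelian, and it is finitely generated by torsion elements, since vertex orders are finite and each partial conjugation has the order of its acting vertex. So $H$ is finite.
\item The group $A$ is a quotient of $\Z/2*\Z/2=D_\infty$. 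It is infinite because $\Out(W_\Gamma)$ is infinite by \cref{theoremfinite}. Alternatively, one can compute $(\chi_{v_1,Z}\chi_{v_2,Z})^k(z)=(v_1v_2)^kz(v_1v_2)^{-k}$, which is nontrivial in $\Out$ for all $k\ne0$. Hence $A\cong D_\infty$.
\item $A$ has finite index in $A\cdot H$ because $H$ is finite. Therefore $\Out^0(W_\Gamma)$ is virtually $\Z$.
\end{enumerate}

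The main obstacle is steps 1 and 2: checking that no vertex other than $v_1,v_2$ can create a non-commuting relation, and that $v_1,v_2$ really contribute only one generator each. This is exactly where uniqueness of the SIL is used, through \cref{lemma7} and \cref{lemma1.4}. It requires care with the component containing $v_{3-i}$, and with the fact that \cref{lemma1.4} concerns distinct acting vertices. For two partial conjugations with the same acting vertex, we would argue directly that they commute.
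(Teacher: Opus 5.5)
Your strategy is sound and your ending is better than the paper's. However, Step 1 relies on \cref{lemma7}, which is false, and that step fails as written; the repair fits inside your own framework. The necessity direction is the paper's. For sufficiency you use the same inputs as the paper: \cref{lemma7} for the generators at $v_1,v_2$, and \cref{lemma1.4} to make every other generator commute with everything. You finish differently, though. The paper identifies $(\Out^0(W_\Gamma))'$ with $\langle[\chi_{v_1,Z},\chi_{v_2,Z}]\rangle$, asserts it is infinite cyclic, and gets finite index from finiteness of the abelianization. You instead write $\Out^0(W_\Gamma)=A\cdot H$ with $H$ finite central and $A$ an infinite quotient of $D_\infty$. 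The paper's only reason for the commutator having infinite order is the identity $[\chi_{v_1,Z},\chi_{v_2,Z}]^2=(\chi_{v_1,Z}\chi_{v_2,Z})^4$, which does not exclude finite order. Your appeal to \cref{theoremfinite} does settle it: if $A$ were finite, $\Out^0(W_\Gamma)$ and hence $\Out(W_\Gamma)$ would be finite. Your version also exhibits $\Out^0(W_\Gamma)$ as a quotient of $D_\infty\times H$, consistent with \cref{examplemainthm}.

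The paper has the same dependence on \cref{lemma7} in its proof. Here is a counterexample to the lemma. Take the $5$-cycle $v_1\,a\,v_2\,q\,p$, add a vertex $b$ adjacent to $v_1$ and $p$, a leaf $z$ at $a$, and a leaf $d$ at $b$, with all labels $2$. Every non-adjacent pair has at most one common neighbour, and the only cut vertices are $a$ and $b$. So the only candidate SILs are $\{v_1,v_2\},\{v_1,z\},\{v_2,z\}$ (common neighbour $a$) and $\{v_1,d\},\{p,d\}$ (common neighbour $b$). Of these, only $\{v_1,v_2|z\}$ is a SIL. Yet $\Gamma\setminus St(v_1)=\{v_2,q\}\sqcup\{z\}\sqcup\{d\}$, so $v_1$ contributes two generators to $\mathcal{P}^0$.

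The fix is as follows. Let $Y$ be a component of $\Gamma\setminus St(v_i)$ other than $Z$ and the one containing $v_{3-i}$. Each of conditions (1)--(4) of \cref{lemma1.4} for the pair $v_i,v_{3-i}$ (with $z\in Z$) would force $z\in Y$ or $v_{3-i}\in Y$. Hence $\chi_{v_i,Y}$ commutes with every partial conjugation, and you can put it into $H$. Since the product of all partial conjugations at $v_i$ is inner, every partial conjugation at $v_i$ is then a power of $\chi_{v_i,Z}$ modulo $H$. So $\Out^0(W_\Gamma)=A\cdot H$ still holds with $H$ finite central, and the rest of your argument is unchanged; \cref{lemma7} is not needed at all. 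The paper's claim that $\chi_{v_1,Z}$ is the unique partial conjugation at $v_1$ needs the same repair.
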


\begin{proof}

We prove the $(\Rightarrow)$ direction by contrapositive.

If $\Gamma$ has no Coxeter SIL, then \cref{theoremfinite} shows that $\Out(W_\Gamma)$ is finite. Applying \cref{theorem3.5} for the connected case and \cref{lemma6} for the disconnected case, if $\Gamma$ contains two distinct Coxeter SILs, then $\Out(W_\Gamma)$ is not virtually $\Z$. If $\Gamma$ contains a non-Coxeter SIL, then by \cref{theoremlarge}, $\Out(W_\Gamma)$ is large. Therefore if either of these conditions fails, $\Out(W_\Gamma)$ is not virtually $\Z$, proving the ($\Rightarrow$) direction. 

Now we prove the $(\Leftarrow)$ direction directly. Apply \cref{lemma5} for the case where $\Gamma$ is disconnected; then $\Out^0(W_\Gamma) \cong D_\infty$, which is virtually $\Z$.

Now assume $\Gamma$ is connected and let $\{v_1,v_2|Z\}$ be the unique Coxeter SIL. By \cref{lemma4}, since $\Gamma$ has exactly one Coxeter SIL, there can be no STIL or FSIL in $\Gamma$. \cref{lemma2.2} shows that this SIL corresponds to the pair of partial conjugations $\chi_{v_1,Z}$ and $\chi_{v_2,Z}$ in $\Out^0(W_\Gamma)$. By \cref{lemma7}, since $\Gamma \backslash St(v_1)$ contains exactly two connected components, $\chi_{v_1,Z}$ is the unique partial conjugation whose acting vertex is $v_1$ in $\Out^0(W)$: Let $D = \Gamma \backslash (St(v_1) \cup Z)$ be the other connected component that $v_1$ acts on by partial conjugation. Then $\chi_{v_1,Z} \circ \chi_{v_1,D} = 1$ in $\Out^0(W_\Gamma)$, because this is an inner conjugation of $\Gamma$ by $v_1$, and inner conjugations are trivial in $\Out^0(W_\Gamma)$. Thus $\chi_{v_1,Z} = (\chi_{v_1,D})^{-1} = \chi_{v_1,D}$ in $\Out^0(W_\Gamma)$, using the fact that $v_1$ and $v_2$ have order 2. Similarly, $\chi_{v_2,Z}$ is the unique partial conjugation in $\Out^0(W_\Gamma)$ whose acting vertex is $v_2$.

Now consider the commutator $[\chi_{v_1,Z}, \chi_{v_2,Z}]$. By conjugating this commutator by $\chi_{v_1,Z}$, we get \[\chi_{v_1,Z} \circ (\chi_{v_1,Z} \circ \chi_{v_2,Z} \circ \chi_{v_1,Z} \circ \chi_{v_2,Z}) \circ \chi_{v_1,Z}\] \[= [\chi_{v_2,Z}, \chi_{v_1,Z}] = [\chi_{v_1,Z}, \chi_{v_2,Z}]^{-1}.\] On the other hand, if we conjugate by $\chi_{v_2,Z}$, we get \[\chi_{v_2,Z} \circ (\chi_{v_1,Z} \circ \chi_{v_2,Z} \circ \chi_{v_1,Z} \circ \chi_{v_2,Z}) \circ \chi_{v_2,Z}\] \[= [\chi_{v_2,Z}, \chi_{v_1,Z}] = [\chi_{v_1,Z}, \chi_{v_2,Z}]^{-1}.\] This is analogous to the proof of Lemma 2.5 from \cite{sale2017outerautomorphismgroupsrightangled}, but these are the only two cases, as $v_1$ and $v_2$ are each the acting vertex in a unique partial conjugation in $\Out^0(W)$.

Let $b \in \Gamma \backslash \{v_1,v_2\}$ be a star cut point and let $\chi_{b,C}$ be a partial conjugation in $\Out^0(W_\Gamma)$. By \cref{lemma1.4}, $\chi_{b,C}$ commutes with $\chi_{v_i,Z}$ (for $i \in \{1,2\}$) if and only if there is a SIL with acting vertices $b$ and $v_i$ satisfying certain conditions. However, since $\{v_1,v_2|Z\}$ is the unique SIL in $\Gamma$, and $b \notin \{v_1,v_2\}$, then the partial conjugations $\chi_{b,C}$ and $\chi_{v_i,Z}$ commute. As a result, $[\chi_{b,C},\chi_{v_i,Z}] = 1$ for $i \in \{1,2\}$. Similarly, for any other partial conjugation $\chi_{b',C'}$ in $\Out^0(W_\Gamma)$, $[\chi_{b,C}, \chi_{b',C'}] = 1$, so the only non-trivial commutator in $(\Out^0(W_\Gamma))'$ is $[\chi_{v_1,Z}, \chi_{v_2,Z}]$. When we commute $[\chi_{v_1,Z}, \chi_{v_2,Z}]$ by $\chi_{b,C}$, we have \[\chi_{b,C} \circ [\chi_{v_1,Z}, \chi_{v_2,Z}] \circ \chi_{b,C}^{-1}\] \[= \chi_{b,C} \circ \chi_{b,c}^{-1} \circ [\chi_{v_1,Z}, \chi_{v_2,Z}]\] \[= [\chi_{v_1,Z}, \chi_{v_2,Z}],\] so the subgroup $\langle [\chi_{v_1,Z}, \chi_{v_2,Z}] \rangle$ is closed in $\Out^0(W_\Gamma)$ under conjugation.

Thus, we conclude that $\chi_1[\chi_2,\chi_3]\chi_1 \in \langle[\chi_{v_1,Z}, \chi_{v_2,Z}] \rangle$ for all partial conjugations $\chi_1, \chi_2, \chi_3 \in \Out^0(W_\Gamma)$. Therefore the commutator subgroup is $(\Out^0(W_\Gamma))' \cong \langle[\chi_{v_1,Z}, \chi_{v_2,Z}] \rangle$. $[\chi_{v_1,Z}, \chi_{v_2,Z}]^2 = (\chi_{v_1,Z}\chi_{v_2,Z})^4$, so $[\chi_{v_1,Z}, \chi_{v_2,Z}]$ is an infinite order element, and $(\Out^0(W_\Gamma))' \cong \Z$.

The abelianization $\Out^0(W_\Gamma)/(\Out^0(W_\Gamma))'$ is finite because $\Out^0(W_\Gamma)$ is finitely generated by finite order elements, so $|\Out^0(W_\Gamma):(\Out^0(W_\Gamma))'|$ is finite. Then we have a finite index normal subgroup isomorphic to $\Z$ in $\Out^0(W_\Gamma)$, thus $\Out^0(W_\Gamma)$ is virtually $\Z$, and hence $\Out(W_\Gamma)$ is virtually $\Z$.
\end{proof}

Now, we describe connected graphs which satisfy the conditions of \cref{theoremmain}.

\begin{example}
The condition that $\Gamma$ must have exactly one Coxeter SIL places strong restrictions on its structure. First, we have a Coxeter SIL $\{v_1,v_2|C\}$. There can be at most one path of length 1 starting at $v_1$, and similarly for $v_2$. If we had two or more paths of length 1 starting at $v_1$, then $v_1$ would be in the intersection of the links of the endpoints of these paths, and the endpoints would form a second SIL. If there is a path of length 1 at either $v_1$ or $v_2$, then there is exactly one path of length at least 2 between $v_1$ and $v_2$ which does not pass through $Lk(v_1) \cap Lk(v_2)$, as in \cref{picturethmex1}. We cannot have two or more paths between $v_1$ and $v_2$ outside of $Lk(v_1) \cap Lk(v_2)$ in this case, otherwise two vertices on these paths would form a second SIL acting on the endpoint of the path of length 1. 

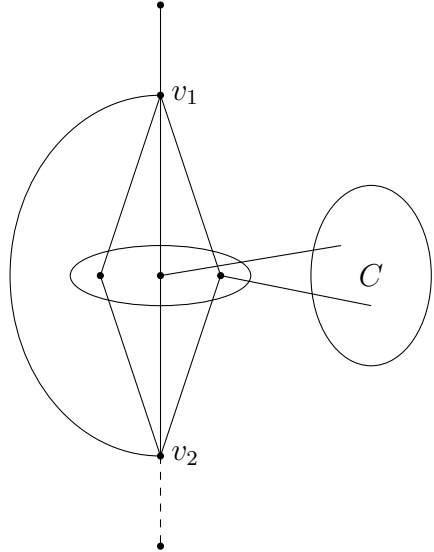
\begin{figure}
    \centering
    \begin{tikzpicture}[scale=0.8]
        \filldraw (1,7) circle (0.05) node[right]{$v_1$};
        \filldraw (1,1) circle (0.05) node[right]{$v_2$};
        \filldraw (0,4) circle (0.05);
        \filldraw (1,4) circle (0.05);
        \filldraw (2,4) circle (0.05);
        \draw (1,4) ellipse (1.5 and 0.5);
        \draw (1,7) -- (0,4) -- (1,1);
        \draw (1,7) -- (1,4) -- (1,1);
        \draw (1,7) -- (2,4) -- (1,1);        
        \draw (1,7) arc (90:270:2.5 and 3);
        \draw (4.5,4) ellipse (1 and 1.5) node{$C$};
        \draw (1,4) -- (4,4.5);
        \draw (2,4) -- (4.5,3.5);
        \filldraw (1,8.5) circle (0.05);
        \draw (1,7) -- (1,8.5);
        \filldraw (1,-0.5) circle (0.05);
        \draw[dashed] (1,1) -- (1,-0.5);
    \end{tikzpicture}
    \caption[One type of connected graph $\Gamma$ with exactly one Coxeter SIL.]{One type of connected graph $\Gamma$ with exactly one Coxeter SIL. $v_1$ is the endpoint of a path of length 1. The dashed path may or may not be present.}
    \label{picturethmex1}
\end{figure}

If there is no path of length 1 starting at $v_1$ or $v_2$, then there is at least one path connecting $v_1$ and $v_2$ which does not pass through the intersection of links, and there is no maximum to how many such paths exist in $\Gamma$. See \cref{picturethmex2}. 

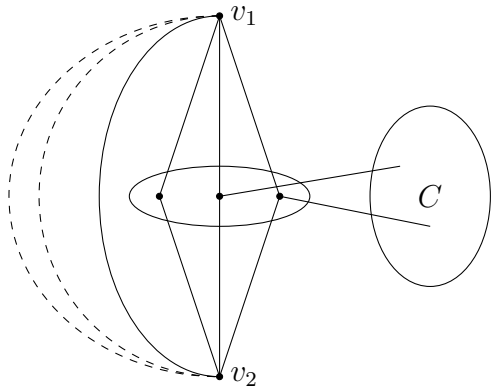
\begin{figure}
    \centering
    \begin{tikzpicture}[scale=0.8]
        \filldraw (1,7) circle (0.05) node[right]{$v_1$};
        \filldraw (1,1) circle (0.05) node[right]{$v_2$};
        \filldraw (0,4) circle (0.05);
        \filldraw (1,4) circle (0.05);
        \filldraw (2,4) circle (0.05);
        \draw (1,4) ellipse (1.5 and 0.5);
        \draw (1,7) -- (0,4) -- (1,1);
        \draw (1,7) -- (1,4) -- (1,1);
        \draw (1,7) -- (2,4) -- (1,1);        
        \draw (1,7) arc (90:270:2 and 3);
        \draw[dashed] (1,7) arc (90:270:3);
        \draw[dashed] (1,7) arc (90:270:3.5 and 3);
        \draw (4.5,4) ellipse (1 and 1.5) node{$C$};
        \draw (1,4) -- (4,4.5);
        \draw (2,4) -- (4.5,3.5);
    \end{tikzpicture}
    \caption[The second type of connected graph $\Gamma$ with exactly one Coxeter SIL.]{The second type of connected graph $\Gamma$ with exactly one Coxeter SIL. There is no maximum on the number of paths between $v_1$ and $v_2$, but there must be at least one outside $Lk(v_1) \cap Lk(v_2)$.}
    \label{picturethmex2}
\end{figure}

The connected component $C$ also cannot contain another SIL. Note there is no restriction on the value of the order map \textbf{m} on any vertices besides $v_1$ and $v_2$. 

\end{example}

\begin{example}\label{examplemainthm}
We give a few concrete examples to show how removing or rearranging one or two edges is the difference between $\Out(W_\Gamma)$ being virtually $\Z$ or not. As in \cref{pictureconcreteex1}, suppose we have a right-angled Coxeter group $W_\Gamma$ and that $\Gamma$ has a SIL $\{v_1,v_2|C\}$, where $C$ is the subgraph induced by $d,e$, and $f$. Since $C$ is a complete subgraph, we do not get a second SIL in $\Gamma$, so $\Out(W_\Gamma)$ is virtually $\Z$. We can verify this concretely: the star cut points of $\Gamma$ are $v_1,v_2$, and $c$, each acting in two non-trivial partial conjugations. We construct $\mathcal{P}^0$, the generating set of $\Out^0(W_\Gamma)$, by assigning the vertex $a$ a 1 in the ordering of the vertices, and the other vertices get an arbitrary ordering. In this way, we do not include $\chi_{v_1,a}, \chi_{v_2,a}$, or $\chi_{c,a}$ in $\mathcal{P}^0$. Then \[\mathcal{P}^0 = \{\chi_{v_1,C}, \chi_{v_2,C}, \chi_{c, \{e,f\}}\}.\] Of these partial conjugations, only $\chi_{v_1,C}$ and $\chi_{v_2,C}$ do not commute, by \cref{lemma1.4}. Thus \[\Out^0(W_\Gamma) \cong (\Z/2\Z * \Z/2\Z) \times \Z/2\Z \cong D_\infty \times \Z/2\Z,\] which is virtually $\Z$. 

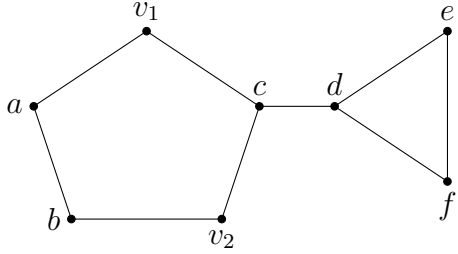
\begin{figure}
    \centering
    \begin{tikzpicture}
        \filldraw (1.5,4) circle (0.05) node[above]{$v_1$};
        \filldraw (3,3) circle (0.05) node[above]{$c$};
        \filldraw (0, 3) circle (0.05) node[left]{$a$};
        \filldraw (0.5,1.5) circle (0.05) node[left]{$b$};
        \filldraw (2.5,1.5) circle (0.05) node[below]{$v_2$};
        \draw (1.5,4) -- (3,3) -- (2.5,1.5) -- (0.5,1.5) -- (0,3) -- (1.5,4);
        \draw (3,3) -- (4,3) -- (5.5,4) -- (5.5,2) -- (4,3);
        \filldraw (4,3) circle (0.05) node[above]{$d$};
        \filldraw (5.5,4) circle (0.05) node[above]{$e$};
        \filldraw (5.5,2) circle (0.05) node[below]{$f$};
    \end{tikzpicture}
    \caption[Example 1 of a graph satisfying the conditions of \cref{theoremmain}.]{Example 1 of a graph satisfying the conditions of \cref{theoremmain}. We assume each vertex is an element of order 2 in $W_\Gamma$.}
    \label{pictureconcreteex1}
\end{figure}

Now suppose we remove the edge between the vertices $d$ and $f$, as in \cref{pictureconcreteex2}. Even though $C$ now has one fewer edge, still no pair of vertices in $C$ has an intersection of links that separates them from the subgraph containing $v_1$ and $v_2$. Therefore we have no additional SILs and $\Out(W_\Gamma)$ still satisfies the condition of \cref{theoremmain}. Again, we can check this concretely: the star cut points of $\Gamma$ are $v_1,v_2,c$, and $d$, each acting in two non-trivial partial conjugations. As in the previous case, we construct $\mathcal{P}^0$ by assigning the vertex $a$ a 1 in the ordering of the vertices. Then \[\mathcal{P}^0 = \{\chi_{v_1,C}, \chi_{v_2,C}, \chi_{c, \{e,f\}}, \chi_{d, \{f\}}\}.\] Because we added no new SILs, we still have only one pair of partial conjugations which do not commute, $\chi_{v_1,C}$ and $\chi_{v_2,C}$. Thus \[\Out^0(W_\Gamma) \cong D_\infty \times \Z/2\Z \times \Z/2\Z.\] 

\begin{figure}
    \centering
    \begin{tikzpicture}
        \filldraw (1.5,4) circle (0.05) node[above]{$v_1$};
        \filldraw (3,3) circle (0.05) node[above]{$c$};
        \filldraw (0, 3) circle (0.05) node[left]{$a$};
        \filldraw (0.5,1.5) circle (0.05) node[left]{$b$};
        \filldraw (2.5,1.5) circle (0.05) node[below]{$v_2$};
        \draw (1.5,4) -- (3,3) -- (2.5,1.5) -- (0.5,1.5) -- (0,3) -- (1.5,4);
        \draw (3,3) -- (4,3) -- (5.5,4) -- (5.5,2);
        \filldraw (4,3) circle (0.05) node[above]{$d$};
        \filldraw (5.5,4) circle (0.05) node[above]{$e$};
        \filldraw (5.5,2) circle (0.05) node[below]{$f$};
    \end{tikzpicture}
    \caption{Example 2 of a graph satisfying the conditions of \cref{theoremmain}.}
    \label{pictureconcreteex2}
\end{figure}
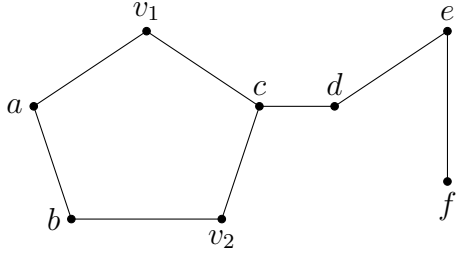

Finally, suppose we return to the first case and remove the edge between the vertices $e$ and $f$, as in \cref{pictureconcreteex3}. Now $\{e,f|D\}$ forms a second distinct SIL in $\Gamma$, where $D = \{v_1,a,b,v_2,c\}$. $\Gamma$ contains no FSIL or STIL, so $\Out(W_\Gamma)$ is virtually abelian but not virtually $\Z$. In this case, the star cut points are $v_1,v_2,c,e$, and $f$; each of these act in two non-trivial partial conjugations, except for $c$, which acts in three. We construct $\mathcal{P}^0$ by assigning the vertex $a$ a 1 in the ordering of the vertices. Then \[\mathcal{P}^0 = \{\chi_{v_1,C}, \chi_{v_2,C}, \chi_{c,\{e\}}, \chi_{c, \{f\}}, \chi_{e,\{f\}}, \chi_{f, \{e\}}\}.\] In $\Out^0(W_\Gamma)$, $\chi_{e,\{f\}} = \chi_{e,D}^{-1} = \chi_{e,D}$, because $e$ is an order 2 element. Similarly, $\chi_{f,\{e\}} = \chi_{f,D}$ in $\Out^0(W_\Gamma)$. Now that we have two SILs, we have two pairs of non-commuting partial conjugations, $\chi_{v_1,C}, \chi_{v_2,C}$ and $\chi_{e,D}, \chi_{f,D}$, each generating a $D_\infty$ in $\Out^0(W_\Gamma)$. Thus \[\Out^0(W_\Gamma) \cong D_\infty \times D_\infty \times \Z/2\Z \times \Z/2\Z.\]

This shows how both the number and placement of edges in $\Gamma$ determine the structure of $\Out(W_\Gamma)$. 

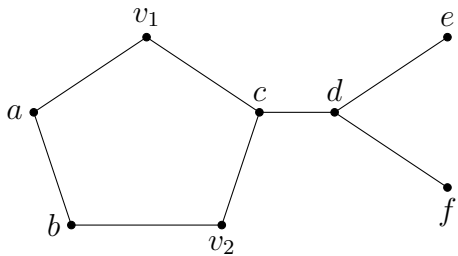
\begin{figure}
    \centering
    \begin{tikzpicture}
        \filldraw (1.5,4) circle (0.05) node[above]{$v_1$};
        \filldraw (3,3) circle (0.05) node[above]{$c$};
        \filldraw (0, 3) circle (0.05) node[left]{$a$};
        \filldraw (0.5,1.5) circle (0.05) node[left]{$b$};
        \filldraw (2.5,1.5) circle (0.05) node[below]{$v_2$};
        \draw (1.5,4) -- (3,3) -- (2.5,1.5) -- (0.5,1.5) -- (0,3) -- (1.5,4);
        \draw (3,3) -- (4,3) -- (5.5,4);
        \draw (4,3) -- (5.5,2);
        \filldraw (4,3) circle (0.05) node[above]{$d$};
        \filldraw (5.5,4) circle (0.05) node[above]{$e$};
        \filldraw (5.5,2) circle (0.05) node[below]{$f$};
    \end{tikzpicture}
    \caption[An example of a graph which does not satisfy the conditions of \cref{theoremmain}.]{An example of a graph which does not satisfy the conditions of \cref{theoremmain}. Even though the vertices $d,e$, and $f$ induce the same number of edges as in \cref{pictureconcreteex2}, this graph contains two distinct SILs.}
    \label{pictureconcreteex3}
\end{figure}

\end{example}

Finally, we encapsulate \cref{theoremfinite}, \cref{theoremlarge}, and \cref{theoremmain} in one statement. 

\begin{theorem}
    Let $\Gamma$ be a graph product of primary cyclic groups.
    \begin{enumerate}
        \item $\Gamma$ contains no SIL exactly when $\Out(W_\Gamma)$ is finite.
        \item $\Gamma$ contains a SIL exactly when $\Out(W_\Gamma)$ is infinite.
        \begin{enumerate}
            \item $\Gamma$ contains a unique Coxeter SIL and no non-Coxeter SIL exactly when $\Out(W_\Gamma)$ is virtually $\Z$. 
            \item $\Gamma$ contains two or more SILs which are Coxeter SILs and do not form an FSIL or STIL exactly when $\Out(W_\Gamma)$ is virtually abelian but not virtually $\Z$.
            \item $\Gamma$ contains a non-Coxeter SIL, an FSIL, or a STIL exactly when $\Out(W_\Gamma)$ is large. 
         \end{enumerate}
    \end{enumerate}
\end{theorem}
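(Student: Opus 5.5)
This final theorem is a summary statement, so the plan is to assemble it from \cref{theoremfinite}, \cref{theoremlarge} and \cref{theoremmain}, adding only the bookkeeping that makes the five clauses mutually consistent. Part (1) is exactly the equivalence of conditions (2) and (3) in \cref{theoremfinite}. Part (2) is its negation: $\Gamma$ contains a SIL if and only if $\Out(W_\Gamma)$ is not finite. Part (2c) is the equivalence of conditions (1) and (3) in \cref{theoremlarge}. Part (2a) is precisely \cref{theoremmain}. Throughout I use that $\Out^0(W_\Gamma)$ has finite index in $\Out(W_\Gamma)$ (\cref{lemmaoutsplit} and \cref{theoremsplitting}). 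This lets each property be read off at either level.

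The only clause needing an argument is (2b), and I would prove it by elimination.

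For $(\Rightarrow)$, suppose $\Gamma$ has at least two SILs, all of them Coxeter, and no FSIL or STIL. By \cref{theoremlarge}, $\Out(W_\Gamma)$ is not large, hence virtually abelian. \cref{theorem3.5} handles the connected case and \cref{lemma6} the disconnected case; together they show $\Out^0(W_\Gamma)$, and hence $\Out(W_\Gamma)$, is not virtually $\Z$.

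For $(\Leftarrow)$, suppose $\Out(W_\Gamma)$ is virtually abelian but not virtually $\Z$.
\begin{itemize}
\item It is infinite, since a finite group is trivially virtually $\Z$-free only in the degenerate sense and we exclude it. So by (1) $\Gamma$ contains a SIL.
\item It is not large, so by \cref{theoremlarge} $\Gamma$ has no non-Coxeter SIL, no FSIL and no STIL.
\item If $\Gamma$ had exactly one SIL, that SIL would be a unique Coxeter SIL. Then \cref{theoremmain} would make $\Out(W_\Gamma)$ virtually $\Z$, a contradiction.
\end{itemize}
Hence $\Gamma$ has two or more SILs, all Coxeter, with no FSIL or STIL.

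The main subtlety is definitional rather than mathematical. First, the phrase ``do not form an FSIL or STIL'' in (2b) should be read as ``$\Gamma$ contains no FSIL or STIL.'' Second, ``exactly one SIL'' must be compared with ``unique Coxeter SIL and no non-Coxeter SIL'' using the paper's notion of distinct SILs; \cref{lemma4} guarantees these cases do not overlap with the STIL case. Third, the finite case must be excluded from ``virtually $\Z$'': a finite group is virtually trivial, not virtually $\Z$ in the infinite sense used here. This convention is needed for (1), (2a), (2b) and (2c) to partition the possibilities, and it agrees with how \cref{theoremmain} was proved, since there the no-SIL case was sent to ``finite.''

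With these conventions fixed, the three subcases of (2) are mutually exclusive and exhaustive by the strict dichotomy in \cref{theoremlarge}. This completes the plan.
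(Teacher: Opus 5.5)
Your proposal is correct and follows the paper, which gives no separate proof and presents this theorem as an encapsulation of \cref{theoremfinite}, \cref{theoremlarge} and \cref{theoremmain}; your elimination argument for (2b), citing \cref{theorem3.5} and \cref{lemma6} (the ingredients of the $(\Rightarrow)$ direction of \cref{theoremmain}), is exactly what that encapsulation amounts to. One wording fix is needed in the first bullet of your $(\Leftarrow)$ for (2b): infiniteness of $\Out(W_\Gamma)$ cannot be deduced there and should be stated as the standing hypothesis inherited from item 2. Under the paper's definition of ``virtually $P$'' a finite group is virtually abelian but not virtually $\Z$, so without that nesting the $(\Leftarrow)$ direction of (2b) would fail for SIL-free $\Gamma$.
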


\bibliographystyle{alpha}
\bibliography{Bibliography}

\end{document}